\newtheorem{theorem}{Theorem}[section]
\newtheorem{corollary}[theorem]{Corollary}
\newtheorem{lemma}[theorem]{Lemma}
\newtheorem{proposition}[theorem]{Proposition}
\newtheorem{definition-proposition}[theorem]{Definition-Proposition}
\newtheorem{question}[theorem]{Question}
\theoremstyle{definition}
\newtheorem{definition}[theorem]{Definition}
\newtheorem{remark}[theorem]{Remark}
\newtheorem{example}[theorem]{Example}
\numberwithin{equation}{section}
\renewcommand{\AA}{\mathbb{A}}
\newcommand{\DD}{\mathbb{D}}
\newcommand{\EE}{\mathbb{E}}
\renewcommand{\L}{\mathbb{L}}
\newcommand{\X}{\mathbb{X}}
\newcommand{\Z}{\mathbb{Z}}
\renewcommand{\c}{\vec c}
\newcommand{\vdelta}{\vec{\delta}}
\newcommand{\y}{\vec y}
\newcommand{\x}{\vec x}
\newcommand{\vell}{\vec {\ell}}
\newcommand{\z}{\vec z}
\newcommand{\s}{\vec s}
\newcommand{\w}{\vec{\omega}}
\newcommand{\cut}{\ar@{-}@[|(5)]}
\newcommand{\Hom}{\operatorname{Hom}\nolimits}
\newcommand{\End}{\operatorname{End}\nolimits}
\newcommand{\Ext}{\operatorname{Ext}\nolimits}
\renewcommand{\Im}{\operatorname{Im}\nolimits}
\renewcommand{\ker}{\operatorname{Ker}\nolimits}
\newcommand{\bo}{\operatorname{b}\nolimits}
\newcommand{\RHom}{\mathbf{R}\strut\kern-.2em\operatorname{Hom}\nolimits}
\DeclareMathOperator{\add}{\mathsf{add}}
\DeclareMathOperator{\thick}{\mathsf{thick}}
\DeclareMathOperator{\CM}{\mathsf{CM}}
\DeclareMathOperator{\moduleCategory}{\mathsf{mod}} \renewcommand{\mod}{\moduleCategory}
\newcommand{\DDD}{\mathsf{D}}
\newcommand{\KKK}{\mathsf{K}}
\DeclareMathOperator{\vect}{\mathsf{vect}}
\DeclareMathOperator{\proj}{\mathsf{proj}}
\tikzset{every picture/.style={line width=0.75pt}} %set default line width to 0.75pt   
\begin{document}

	\title[A recollement approach to Brieskorn-Pham  singularities]{A recollement approach to Brieskorn-Pham  singularities}

	\author[W. Weng] {Weikang Weng}

	\makeatletter \@namedef{subjclassname@2020}{\textup{2020} Mathematics Subject Classification} \makeatother
	
	\subjclass[2020]{16G10, 16G50, 18G80}
	\keywords{Brieskorn-Pham singularity,  Cohen-Macaulay module,  recollement, tilting object, $m$-replicated algebra}
	
	\begin{abstract} 
		In this paper, we construct recollements and ladders for Brieskorn-Pham  singularities via  reduction/insertion functors,  and study the singularity categories of the Brieskorn-Pham  singularities  using these ladders. 
				
		In particular, we construct a class of tilting objects, called the extended tilting $n$-cuboids, whose endomorphism algebras are $n$-fold tensor products of certain Nakayama algebras.  Moreover, we show that   such an endomorphism algebra is derived equivalent to a certain replicated algebra.  This generalizes the Happel-Seidel symmetry to the context of Brieskorn-Pham singularities.
			
		%In particular, we construct a class of tilting objects, called the extended tilting $n$-cuboids, whose endomorphism algebras are $n$-fold tensor products of certain Nakayama algebras.  Moreover, we show that endomorphism algebra of such a tilting object is derived equivalent to a certain replicated algebra of $(n-1)$-fold tensor products of certain Nakayama algebras.  As a consequence, we generalize the Happel-Seidel symmetry for Brieskorn–Pham singularities.  
		%We also give an explanation of the replicated algebras of the Dynkin cases.
		%The aim of this paper is to study the singularity category of the Brieskorn-Pham  singularity $R=\mathbf{k}[X_1, \dots, X_n]/(\sum_{i=1}^{n} X_i^{p_i})$ for a sequence $(p_1,\dots,p_n)$ via recollements and ladders. We construct the recollements and ladders for Brieskorn-Pham  singularities by using reduction/insertion functors. 	
		
	\end{abstract}
	
	\maketitle
	
	\section{Introduction}
	
	The \emph{Brieskorn-Pham} (\emph{BP}) \emph{singularity} of a sequence $(p_1,\dots,p_n)$ with integers $p_i\ge 2$ over an algebraically closed field $\mathbf{k}$, is defined as  $$R:=\mathbf{k}[X_1, \dots, X_n]/(\sum_{i=1}^{n} X_i^{p_i}).$$ 	
	Brieskorn-Pham singularities and their (graded) singularity categories have been actively studied,  e.g. representation theory \cite{HIMO}, homological mirror symmetry \cite{FU} and automorphic forms \cite{Neumann:1977}.   In particular, for the case $n=3$, such a singularity is called a \emph{triangle singularity}. Its singularity category  has been investigated through multiple approaches, such as the stable category of vector bundles on weighted projective lines in the sense of Geigle-Lenzing \cite{KLM,KLM2,L1}, Cohen-Macaulay modules and matrix factorizations \cite{KST1,KST2}; for the case $n=4$, we refer to \cite{CRW}.
	
	Recollements of triangulated categories, first introduced by Beilinson, Bernstein, and Deligne in \cite{BBD}, arose in the study of derived categories of sheaves on topological spaces. The central idea is to investigate the `middle' large category by analyzing two smaller ones. Gluing techniques have been widely used in the literature to construct torsion pairs, tilting objects, silting objects, $t$-structures, co-$t$-structures and $n$-cluster tilting subcategories, for example, see \cite{AKL, Bo, DZ,LVY,LZZ,SZ}. 
	
	Ladders, later introduced by Beilinson, Ginzburg and Schechtman \cite{BGS}, generalize the notion of recollement. Roughly speaking, a ladder of triangulated categories is a collection of recollements of triangulated categories. The ladders have been used to study the derived simplicity of the derived module categories \cite{AKLD}, and the compactly generated triangulated categories \cite{BAS}. 
	We remind that a recollement of a triangulated category with Serre duality can be extended to a ladder by using the reflecting approach introduced by J{\o}gensen \cite{J}.	
	For the stable category of vector bundles over weighted projective lines, Chen \cite{C} constructed the reduction and insertion functors that yield a recollement, and later Ruan \cite{Ruan} extended this recollement to a ladder by interpreting these functors via 
	$p$-cycle constructions. 
	
	%The present paper 	generalize the ladder to the context of Brieskorn–Pham singularities by extending the reduction and insertion functors.
	The aim of the paper is to generalize Chen's approach to construct a ladder for BP  singularities,  and study the singularity categories of the BP  singularities  using the ladder in terms of  Cohen-Macaulay modules. 
	
	%In this paper, we generalize Chen's approach to construct a ladder for BP  singularities, 
	%which provides a general and powerful method to
	%study the singularity categories of the BP  singularities  in terms of  Cohen-Macaulay modules. 
	
	Let $\L$ be the abelian group generated by the symbols $\x_1,\ldots,\x_n,\c$ modulo the relations $p_i\x_i=:\c$ for $1\le i \le n$. The algebra $R$ is $\L$-graded by setting $\deg X_i:=\x_i$, called an \emph{$\L$-graded Brieskorn-Pham} (\emph{BP}) \emph{singularity} with weights $p_1,\dots,p_n$. Denote by $\CM^{\L}R$ the category  of $\L$-graded (maximal) Cohen–Macaulay $R$-modules and by $\underline{\CM}^{\L} R$ its stable category. By results of Buchweitz \cite{Bu} and Orlov \cite{Orlov:2009}, there exists a triangle equivalence 
	$
		\DDD^{\L}_{\rm sg}(R)\simeq\underline{\CM}^{\L}R,
	$
	where $\DDD^{\L}_{\rm sg}(R):=\DDD^{\bo}(\mod^{\L}R)/\KKK^{\bo}(\proj^{\L}R)$
	is the \emph{singularity category} of $R$.
	
	We give a natural analog of  the reduction and insertion functors in \cite{C} for the BP singularities and extend them by using the reflecting approach \cite{J}.  This yields the following result, which covers \cite[Theorem 5.2]{C} and \cite[Theorem 4.9]{Ruan}.

	\begin{theorem}	[Theorem \ref{ladder diagram}]\label{Thm A} For $j=1,2$, let $R^{j}$ be an $\L_j$-graded BP singularity  with weights $p_1,\ldots,p_{n-1},p_{j,n}$ such that $p_{1,n}+p_{2,n}=p_n+1$. Write $q:=p_{1,n}$. Then the following diagram is an infinite ladder of period $p_n$.
		\begin{align*}
			\xymatrix@C=1cm{
				\underline{\CM}^{\L_1}{R^1}
				\ar@/_3pc/[rrr]_{\vdots}|{{\psi}_{1,q}}
				\ar@/^3pc/[rrr]^{\vdots}|{{\psi}_{1,q-2}}
				\ar[rrr]|{{\psi}_{1,q-1}}
				&&& \underline{\CM}^{\L}{R}
				\ar@/_1.5pc/[lll]|{{\phi}_{1,q-1} }
				\ar@/^1.5pc/[lll]|{{\phi}_{1,q}}
				\ar[rrr]|{{\phi}_{2,0}}
				\ar@/_3pc/[rrr]_{\vdots}|{{\phi}_{2,1}}
				\ar@/^3pc/[rrr]^{\vdots}|{{\phi}_{2,-1}}
				&&& \underline{\CM}^{\L_2}{R^2}
				\ar@/_1.5pc/[lll]|{{\psi}_{2,-1}}
				\ar@/^1.5pc/[lll]|{{\psi}_{2,0}}
			}
		\end{align*}
		Here, the functors in the ladder are explicitly constructed by (\ref{Reduction/insertion functor}).		
	\end{theorem}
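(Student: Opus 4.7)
The plan is to follow the approach of \cite{C,Ruan}: first construct a base recollement
\[
\underline{\CM}^{\L_1} R^1 \xrightarrow{\psi_{1,q-1}} \underline{\CM}^{\L} R \xrightarrow{\phi_{2,0}} \underline{\CM}^{\L_2} R^2
\]
with adjoint triples $(\phi_{1,q-1},\psi_{1,q-1},\phi_{1,q})$ and $(\psi_{2,-1},\phi_{2,0},\psi_{2,0})$, and then extend it infinitely in both directions via J{\o}rgensen's reflection technique \cite{J}; the periodicity $p_n$ should come from the relation $p_n\x_n=\c$ that defines $\L$.

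\textbf{Establishing the base recollement.} The six central functors are given explicitly by (\ref{Reduction/insertion functor}) as restriction/induction functors along ring maps relating $R$ and $R^j$, analogous to Chen's construction for weighted projective lines. I would first verify that these functors preserve maximal Cohen-Macaulay modules and hence descend to well-defined triangle functors between the stable categories. Next, I would establish the two adjoint triples by a direct computation of the unit and counit transformations on graded modules. The recollement axioms — fully faithfulness of $\psi_{1,q-1}$ and $\psi_{2,0}$, the vanishing $\phi_{2,0}\circ\psi_{1,q-1}=0$, and the existence of the gluing triangles — then reduce to routine checks based on these explicit formulas.

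\textbf{Extending to a ladder.} Since $R$ is $\L$-graded Gorenstein, $\underline{\CM}^{\L}R$ is Hom-finite, Krull-Schmidt, and admits a Serre functor given by a grading shift by the dualising element together with the suspension; the same holds for $\underline{\CM}^{\L_j}R^j$. By \cite{J}, any recollement of such categories admits both left and right reflections, yielding a ladder whose successive rungs are related by conjugation with these Serre functors. I would iterate this procedure to obtain abstract functors at all levels, and then verify by induction on the index $\ell$ that the iterated reflections agree up to natural isomorphism with the explicit formulas $\psi_{j,\ell}$ and $\phi_{j,\ell}$ in (\ref{Reduction/insertion functor}), thereby fixing the indexing convention.

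\textbf{Periodicity and main obstacle.} After one full period of reflections, the composite auto-equivalence on each of $\underline{\CM}^{\L_1}R^1$ and $\underline{\CM}^{\L_2}R^2$ should be a grading shift whose order is controlled by the arithmetic of the weights $p_n,p_{1,n},p_{2,n}$; the hypothesis $p_{1,n}+p_{2,n}=p_n+1$ combined with $p_n\x_n=\c$ is what forces the period to be exactly $p_n$. The hard part of the proof will be precisely this bookkeeping: one must carefully track how each reflection combines a suspension with an $\L$-grading shift, and verify that the $p_n$-fold iterate reproduces the identity up to the canonical grading shift by $\c$ on both outer categories, so that the period is $p_n$ exactly rather than a proper multiple. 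Once this matching is in place, the infinite ladder of the claimed shape follows directly from the iterative reflection construction.
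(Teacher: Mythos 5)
Your overall strategy coincides with the paper's: establish the middle recollement from the explicit adjoint triples, extend it in both directions (the paper does this directly, since the adjoint triples $(\phi_{j,k},\psi_{j,k},\phi_{j,k+1})$ are already constructed for every $k$, so no inductive identification of iterated reflections with the explicit formulas is needed), and read off the periodicity from the arithmetic of the weights. However, there is a genuine gap where you write that the recollement axioms ``reduce to routine checks.'' The orthogonality $\phi_{2,0}\psi_{1,q-1}=0$ is indeed routine, but it only gives the inclusion $\Im\psi_{1,q-1}\subseteq\ker\phi_{2,0}$; condition (R3) requires equality, which (given the adjoint triples and full faithfulness) amounts to showing $\thick\langle\Im\psi_{1,q-1}\cup\Im\psi_{2,0}\rangle=\underline{\CM}^{\L}R$ together with $\phi_{1,q}\psi_{2,0}=0$. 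This generation statement is the substantive content of the proof and does not follow formally from the explicit formulas: one must first know that $\underline{\CM}^{\L}R$ is generated by the objects $\rho(\mathbf{k})(\y)$ (which rests on $R$ having $\L$-isolated singularities), then compute the action of the insertion functors on these generators, and finally handle the residual degrees $y_n=p_{2,n}$ by a non-formal dévissage: the short exact sequence $0\to K\to E^{\s+(p_{2,n}-1)\x_n}(\y)\to\mathbf{k}(\y)\to 0$ yields a triangle exhibiting $\rho(\mathbf{k})(\y)$ inside the thick subcategory generated by the two images. Your proposal gives no indication of this step.

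Conversely, you have misplaced the difficulty: the periodicity, which you single out as ``the hard part,'' is immediate from the definitions. Since $\phi_{j,k+bp_n}$ differs from $\phi_{j,k}$ only by pre- and post-composition with degree-shift autoequivalences (using $p_n\x_n=\c$ and $\phi_{j,0}(\c)=(\c_j)\phi_{j,0}$), one has $\ker\phi_{j,k+bp_n}=\ker\phi_{j,k}$ and likewise $\Im\psi_{j,k+bp_n}=\Im\psi_{j,k}$, so the associated TTF triples repeat with period $p_n$; no bookkeeping of Serre-functor twists is required.
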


	Due to the explicit formulas for the action of the reduction/insertion functors and the suspension functor on a  class of Cohen-Macaulay modules, see Lemma \ref{L-action to U} and Proposition \ref{c=[2]}, we construct a class of tilting objects in $\underline{\CM}^{\L} R$ using the ladder. Let  $\vdelta :=\sum_{i=1}^{n}(p_i-2)\x_i$ and $\s:=\sum_{i=1}^{n}\x_i$. For each  $\vell=\sum_{i=1}^{n}\ell_i\x_i\in[\s,\s+\vdelta]$, let
	$$E^{\vell}:=R/(X_i^{\ell_i}\mid 1\le i\le n)\in \mod^{\L} R ~\text{ and } ~U^{\vell}:=\rho(E^{\vell}),$$ 
	where $\rho$ denotes the composition $\mod^{\L}R \subset \DDD^{\bo}(\mod^{\L}R)\to\DDD_{\rm sg}^{\L}(R)\xrightarrow{\sim} \underline{\CM}^{\L}R$.	 	
	For each $\x=\sum_{i=1}^{n} \lambda_i\x_i$ with $0\le \x\le \vdelta$, let $ \sigma(\x):= \sum_{i=1}^{n} \lambda_i$. 
	Denote by $[1]$  the suspension functor in the triangulated category $\underline{\CM}^{\L} R$. Denote by $\mathbf{k}\vec{\AA}_{n}(m):=\mathbf{k}\vec{\AA}_n/{\rm rad^m} \, \mathbf{k}\vec{\AA}_n$ for  $n,m \ge 1$. Here $\mathbf{k}\vec{\AA}_n$ denotes the path algebra of the equioriented quiver of type $\AA_n$ and ${\rm rad}\, \mathbf{k}\vec{\AA}_n$ denotes the ideal generated by all arrows of $\mathbf{k}\vec{\AA}_n$.
	
	\begin{theorem} [Theorem \ref{tilting object}] \label{Thm B}  For a subset  $I$ of $\{1,\dots,n\}$, we set $\vdelta_I:=\sum_{i\in I}(p_i-2)\x_i$ and $I^{c}:=\{1,\dots,n\}\setminus I$.  Then the object
		$$V=V_I:=\bigoplus_{\s\leq\vell\leq\s+\vdelta_I}\bigoplus_{0\leq\x\leq\vdelta-\vdelta_{I}}U^{\vell}(\x)[-\sigma(\x)]$$	
		%$$V:=\bigoplus_{\vell\in[\s,\s+\vdelta_I]}\bigoplus_{\x\in[0,\vdelta-\vdelta_{I}]}U^{\vell}(\x)[-\sigma(\x)]$$		
		is   tilting  in $\underline{\CM}^{\L} R$, called the \emph{extended tilting $n$-cuboid} with endomorphism algebra $$\underline{\End}(V)^{\rm op}\simeq \, \bigotimes_{ i\in  I} \mathbf{k}\vec{\AA}_{p_i-1} \otimes  \bigotimes_{i\in I^{c}} \mathbf{k}\vec{\AA}_{p_i-1}(2). $$ 
		Moreover, the objects $U^{\vell}(\x)[-\sigma(\x)]$ $(\s\leq\vell\leq\s+\vdelta_I,\, 0\le \x \le \vdelta-\vdelta_I)$ can be arranged to form a full, strong exceptional sequence.	
	\end{theorem}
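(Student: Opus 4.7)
The strategy is to verify the tilting axioms for $V=V_I$ --- vanishing of $\underline{\Hom}(V,V[k])$ for $k\neq 0$, generation of $\underline{\CM}^{\L}R$, and the prescribed endomorphism algebra --- by combining the explicit formulas for the action of the degree twist $(\x_i)$ and of the suspension $[1]$ on the modules $U^{\vell}$ (Lemma \ref{L-action to U} and Proposition \ref{c=[2]}) with an inductive gluing argument along the ladder of Theorem \ref{Thm A}.

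For the Hom-calculation, I would invoke Proposition \ref{c=[2]}, which identifies the $\c$-twist with $[2]$, to reduce any morphism space
$$\underline{\Hom}\bigl(U^{\vell}(\x)[-\sigma(\x)],\, U^{\vell'}(\x')[-\sigma(\x')+k]\bigr)$$
to an $\L$-graded stable $\underline{\Hom}(U^{\vell},U^{\vell'}(\mu))$ in a specific degree $\mu$ depending on $\x,\x',k$. Using $U^{\vell}=\rho(E^{\vell})$ together with a Koszul-type resolution of $E^{\vell}$ by twists of $R$ and the natural surjections $E^{\vell'}\twoheadrightarrow E^{\vell}$ induced by ideal inclusions, one shows that the graded $\underline{\Hom}$ factors as a tensor product indexed by the coordinates. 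For $i\in I$, varying $\vell_i$ reproduces the path algebra $\mathbf{k}\vec{\AA}_{p_i-1}$ with no extra relations; for $i\in I^c$, the twist $(\x_i)$ combined with the compensating homological shift $[-\sigma(\x)]$ and the relation $\c=[2]$ forces the two-step compositions to vanish, yielding $\mathbf{k}\vec{\AA}_{p_i-1}(2)$. This simultaneously gives $\Hom$-vanishing for $k\neq 0$ and the claimed tensor description of $\underline{\End}(V)^{\rm op}$. A lexicographic refinement of the product order on the pairs $(\vell,\x)$ then arranges the summands into a full strong exceptional sequence, by the standard argument for tensor products of linearly oriented type-$\AA$ (Nakayama) path algebras.

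The main obstacle is generation. I would argue by induction on $|I^c|$ using the ladder of Theorem \ref{Thm A}. In the base case $I^c=\emptyset$, the object $V_{\{1,\ldots,n\}}=\bigoplus_{\s\le\vell\le\s+\vdelta}U^{\vell}$ is the pure tilting cuboid; one verifies generation by iterating the ladder along a single weight $p_j$ with splitting $(p_j-1,2)$, using Lemma \ref{L-action to U} to identify the images of the $V$-summands under the reduction/insertion functors with tilting cuboids on the smaller BP singularities $R^1$ and $R^2$, and then inducting on $\sum_i p_i$. For the inductive step $I\leadsto I\setminus\{j\}$, the additional $\x_j$-shift layer of summands in $V_{I\setminus\{j\}}$ is identified via Lemma \ref{L-action to U} and Proposition \ref{c=[2]} with the image of the $V_I$-layer under one rung of the $p_j$-periodic ladder; the standard gluing lemma for classical generators along a recollement then upgrades generation from $V_I$ to $V_{I\setminus\{j\}}$. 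The main technical care throughout this step is in matching the shifted summands on the nose with the images of the ladder functors after absorbing the homological shifts via $\c=[2]$.
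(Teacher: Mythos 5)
Your overall toolkit (the explicit formulas of Lemma \ref{L-action to U} and Proposition \ref{c=[2]}, the ladder of Theorem \ref{Thm A}, and the gluing criterion for tilting objects) is exactly what the paper uses, and your treatment of the case $I^c=\emptyset$ --- splitting one weight as $(p_j-1,2)$, identifying the images of the outer tilting cuboids under the insertion functors, and inducting on the weights --- coincides with the paper's argument (its Case $n\in I$). The endomorphism-algebra computation, however, is only asserted: the claim that the graded stable Hom ``factors as a tensor product indexed by the coordinates'' via a Koszul resolution is precisely the content of the paper's Lemma \ref{Hom}, which is itself proved by a delicate case-by-case induction along the ladder; moreover, the vanishing of the two-step compositions for $i\in I^c$ relies on the identities $U^{\vell+(p_i-2)\x_i}=U^{\vell}(\x_i)[-1]$ together with rigidity, so deducing rigidity \emph{from} the tensor description risks circularity.

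The genuine gap is in your inductive step on $|I^c|$. The recollements produced by Theorem \ref{Thm A} relate $\underline{\CM}^{\L}R$ to the stable categories of BP singularities $R^1,R^2$ with \emph{strictly smaller} weights; a ``rung of the ladder'' is not an endofunctor of $\underline{\CM}^{\L}R$, so there is no functor carrying the summands of $V_I$ to those of $V_{I\setminus\{j\}}$ (note that passing from $I$ to $I\setminus\{j\}$ shrinks the $\vell$-range and enlarges the $\x$-range in the $j$-th coordinate --- the two objects are not related by adding a ``layer''). Proposition \ref{gluing tilting} glues tilting objects living in the two \emph{outer} terms of a recollement into the middle term; it cannot be used to ``upgrade generation'' from one candidate tilting object to another inside the same middle category. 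The paper avoids this by running a single induction on the weight vector $(p_1,\dots,p_n)$ with the subset $I$ fixed throughout: in the induction step it distinguishes $n\in I$ from $n\notin I$, and in each case writes $V_I$ for the large weights as $\psi(T_1)\oplus\psi'(T_2)$, where $T_1,T_2$ are the extended cuboids for the \emph{same} $I$ on $R^1,R^2$ supplied by the induction hypothesis. In the case $n\notin I$ the identification of $\psi_{2,3-p_n}(T_2)$ with the top $\x_n$-slab of $V_I$ requires absorbing degree shifts through $U^{\vell}(\c)=U^{\vell}[2]$ and a separate parity analysis of $p_n$, and the verification of the gluing condition requires computing $\phi_{2,4-p_n}\psi_{1,1}(T_1)$ in three subcases; none of this is visible in your sketch, so the inductive step as written would not go through.
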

	
	As one extreme case $I=\{1,\dots,n\}$ of Theorem \ref{Thm B},  $\bigoplus_{\s\leq\vell\leq\s+\vdelta}U^{\vell}$  is tilting  in $\underline{\CM}^{\L} R$, called the \emph{tilting $n$-cuboid}, whose endomorphism algebra is $\bigotimes_{i=1}^{n} \mathbf{k}\vec{\AA}_{p_i-1}$. 
	This is known by Kussin-Lenzing-Meltzer \cite{KLM} for the case $n=3$, by Futaki-Ueda \cite{FU} and Herschend-Iyama-Minamoto-Oppermann \cite{HIMO} for general $n$. 

	As a refinement of Proposition \ref{thick}(a),  $\rho(\mathbf{k})(\x)$ $(0\le \x \le \vdelta)$ can be arranged to form a full exceptional sequence but not strong in general. Thus it is natural to pose the following question, which is a higher version of the question posed by Kussin-Lenzing-Meltzer \cite[Remark 6.10]{KLM} in terms of Cohen-Macaulay modules.
	
	\begin{question} [Question \ref{question}] \label{question 0}
		Does there exist a full, strong exceptional sequence in $\underline{\CM}^{\L} R$ from the objects $\rho(\mathbf{k})(\x)$, $\x\in\L$.
	\end{question}

	There are some partial answers of Question \ref{question 0}, which are given by \cite{KLM,KLM2} for the type $(2,3,n)$ with $n \ge 2$, and by \cite{DR} for the type $(2,a,b)$ with $a,b\ge2$.	 				
	As the other extreme case $I=\emptyset$ of Theorem \ref{Thm B}, $\bigoplus_{0\leq\x\leq\vdelta}\rho(\mathbf{k})(\x)[-\sigma(\x)]$ can be arranged to form a full, strong exceptional sequence in $\underline{\CM}^{\L} R$.  
	%where one of the implications gives a positive answer to Question \ref{question} for type $(2,p_2,\dots,p_n)$ with $p_i\ge 2$.
	This yields a positive answer to Question \ref{question 0} for type $(2,p_2,\dots,p_n)$ with $p_i\ge 2$. 
	%In fact, we note that $p_1=2$ imples $\rho(\mathbf{k})[1]= \rho(\mathbf{k})(\x_1)$. 
	
	Happel-Seidel symmetry, established in \cite{HS},  concerns the derived equivalence of homogeneous acyclic Nakayama algebras.
	A general and simple explanation of  this symmetry is provided in \cite{KLM} through the stable categories of vector bundles over  weighted projective lines. More precisely, it is shown that the Nakayama algebras  $\mathbf{k}\vec{\AA}_{m}(p_1)$, $\mathbf{k}\vec{\AA}_{m}(p_2)$, and the incidence algebra $\mathbf{k}\vec{\AA}_{p_1-1}\otimes \mathbf{k}\vec{\AA}_{p_2-1}$ are derived equivalent, where $m:=(p_1-1)(p_2-1)$. 
	In our treatment, we show that the $(p_t-2)$-replicated algebras of $\bigotimes_{i\in S\setminus\{t\}}\mathbf{k}\vec{\AA}_{p_i-1}$ for $t\in S:=\{1\dots,n\}$, are realized as endomorphism algebras of tilting objects on $\underline{\CM}^{\L} R$. This following result   extends the Happel-Seidel symmetry to the setting of BP singularities for general $n$.

	\begin{theorem}[Corollaries \ref{nHappel-Seidel Symmetry}, \ref{Happel-Seidel Symmetry n=2}] \label{Thm C} For each $t\in S:=\{1\dots,n\}$, the $(p_t-2)$-replicated algebra of $\bigotimes_{i\in S\setminus\{t\}}\mathbf{k}\vec{\AA}_{p_i-1}$ and the stable category $\underline{\CM}^{\L} R$ are derived equivalent. 
		In particular, when $n=2$, the Nakayama algebras $\vec{\AA}_{m}(p_1)$ and $\vec{\AA}_{m}(p_2)$ are derived equivalent, where $m:=(p_1-1)(p_2-1)$.
		
	\end{theorem}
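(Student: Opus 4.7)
The plan is to deduce both parts as immediate corollaries of Theorem~\ref{Thm B}, by choosing the index set $I$ to isolate a single factor. Fix $t \in S = \{1, \ldots, n\}$ and apply Theorem~\ref{Thm B} with $I := S \setminus \{t\}$, so $I^c = \{t\}$. The theorem furnishes an extended tilting $n$-cuboid $V = V_{S \setminus \{t\}} \in \underline{\CM}^{\L}R$ whose opposite endomorphism algebra is
\[
C_t := \underline{\End}(V)^{\op} \simeq \bigotimes_{i \in S \setminus \{t\}} \mathbf{k}\vec{\AA}_{p_i - 1} \otimes \mathbf{k}\vec{\AA}_{p_t - 1}(2).
\]
Since $\underline{\CM}^{\L}R$ is algebraic, Hom-finite and Krull-Schmidt, Keller's tilting theorem then yields a triangle equivalence $\underline{\CM}^{\L}R \simeq \mathsf{D}^{\bo}(\mod C_t)$.

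To complete the first assertion, I identify $C_t$ with (or at worst derivedly equivalent to) the $(p_t - 2)$-replicated algebra of $B_t := \bigotimes_{i \in S \setminus \{t\}} \mathbf{k}\vec{\AA}_{p_i - 1}$. This is a purely algebraic statement about hereditary algebras, independent of the Brieskorn-Pham geometry: with the standard quiver-with-relations presentation, both $B_t \otimes \mathbf{k}\vec{\AA}_{p_t - 1}(2)$ and $B_t^{(p_t - 2)}$ are described by $p_t - 1$ stacked copies of the quiver of $B_t$ linked by level-raising arrows, subject to commutativity against the arrows of $B_t$ and to radical-square-zero relations along each vertical column. Making this identification (or derived equivalence) precise is the main technical step of the argument; once it is in place, all of the geometric input enters only through Theorem~\ref{Thm B}.

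For the specialization $n = 2$, set $B_t = \mathbf{k}\vec{\AA}_{p_{3 - t} - 1}$ and apply the first part for both $t = 1$ and $t = 2$. This realizes $\underline{\CM}^{\L}R$ as the bounded derived category of $(\mathbf{k}\vec{\AA}_{p_{3 - t} - 1})^{(p_t - 2)}$ in two different ways. Combined with the classical derived equivalence between the $(p - 2)$-replicated algebra of $\mathbf{k}\vec{\AA}_q$ and the homogeneous Nakayama algebra $\mathbf{k}\vec{\AA}_{q(p - 1)}(p)$ --- the tensor-product avatar of the Happel-Seidel symmetry as revisited in \cite{KLM} --- each of $\mathbf{k}\vec{\AA}_m(p_1)$ and $\mathbf{k}\vec{\AA}_m(p_2)$ with $m = (p_1 - 1)(p_2 - 1)$ becomes derived equivalent to $\underline{\CM}^{\L}R$, and hence to one another by transitivity. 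The principal obstacle is thus the clean identification of $B \otimes \mathbf{k}\vec{\AA}_{m+1}(2)$ with $B^{(m)}$ for hereditary $B$; the remainder of the proof is a formal consequence.
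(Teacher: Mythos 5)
There is a genuine gap, and it sits exactly where you place ``the main technical step.'' Your argument reduces everything to the claim that $C_t=B_t\otimes\mathbf{k}\vec{\AA}_{p_t-1}(2)$ is isomorphic (or at least derived equivalent) to the replicated algebra $B_t^{(p_t-2)}$, and you justify this by asserting that both algebras are presented by $p_t-1$ stacked copies of the quiver of $B_t$ ``linked by level-raising arrows \dots{} with radical-square-zero relations along each vertical column.'' That description is correct for the tensor product but false for the replicated algebra: by Schr\"oer's presentation of the repetitive algebra, as recorded in Lemma \ref{quiver of Gamma}, consecutive copies of $B_t$ inside $B_t^{(m)}$ are joined by a \emph{single} connecting arrow, from the sink of one copy to the source of the next (because $DB_t$ is generated as a bimodule by the dual of the longest path), not by a full column of vertical arrows. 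Already for $n=2$, $p_1=p_2=3$ one has $B_2^{(1)}=\mathbf{k}\vec{\AA}_4(3)$ (three arrows) versus $\mathbf{k}\vec{\AA}_2\otimes\mathbf{k}\vec{\AA}_2(2)=\mathbf{k}\vec{\AA}_2\otimes\mathbf{k}\vec{\AA}_2$ (four arrows): these are derived equivalent but not isomorphic, and the derived equivalence requires a genuine tilting argument. So the step you defer is essentially the content of the theorem, not a routine quiver identification, and as written your proposal does not prove it.

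The paper closes this gap differently: rather than comparing $V_{S\setminus\{t\}}$ with the replicated algebra, it constructs a second tilting object $T^t=\bigoplus_{\s+(p_t-2)\x_t\le\vell\le\s+\vdelta}\bigoplus_{0\le i\le p_t-2}U^{\vell}(-i\s)[in]$ (Theorem \ref{replicated thm}) whose endomorphism algebra is the replicated algebra $\Gamma^t$ on the nose. The induction there glues $T_1$ and $T_2$ along a recollement twisted by the Serre functor $\mathbb{S}=(-\s)[n]$, and the single connecting arrow between consecutive copies is produced by Auslander--Reiten--Serre duality, $D\underline{\Hom}(U^{\vell},U^{\z}(-\s)[n])\simeq\underline{\Hom}(U^{\z},U^{\vell})$. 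Your $n=2$ specialization would then go through essentially as you describe (modulo an indexing slip: the $(p-2)$-replicated algebra of $\mathbf{k}\vec{\AA}_q$ is $\mathbf{k}\vec{\AA}_{q(p-1)}(q+1)$, not $\mathbf{k}\vec{\AA}_{q(p-1)}(p)$, which is why applying the first part at both $t=1$ and $t=2$ yields $\vec{\AA}_m(p_2)$ and $\vec{\AA}_m(p_1)$), but only once the first assertion has actually been established.
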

	
	%This yields  that  the algebra $\bigotimes_{i=1}^{n} \mathbf{k}\vec{\AA}_{p_i-1}$ is derived equivalent to the $(p_t-2)$-replicated algebra of $\bigotimes_{i\in S\setminus\{t\}}\mathbf{k}\vec{\AA}_{p_i-1}$ for some $t\in S=\{1\dots,n\}$. As an application, there are some derived equivalences of  replicated algebras  of the Dynkin type, see Proposition \ref{tensor product 2}.

	The paper is organized as follows. In Section \ref{sec: Recollements and Brieskorn-Pham singularities}, we recall some basic concepts and properties on recollements, ladders and  BP singularities. In Section \ref{sec: A ladder for Brieskorn-Pham  singularities}, we introduce the reduction and insertion functors, and provide an explicit formula for the action of these functors on a distinguished class of Cohen-Macaulay modules, see Lemma \ref{L-action to U}. This yields a ladder for BP  singularities, as established in Theorem \ref{Thm A}. In Section \ref{sec: Eisenbud periodicity}, we study the action of the suspension functor on  the class of Cohen-Macaulay modules, see Proposition \ref{c=[2]}, leading to a version of Eisenbud periodicity. In Section \ref{sec: Tilting objects via tensor products}, we construct a class of tilting objects via the ladder, see Theorem \ref{Thm B}. Two particularly important cases are treated in Corollaries \ref{tilting n-cub} and \ref{tilting n-cub  2}. In Section \ref{sec: Happel-Seidel symmetry}, we  generalize  the Happel-Seidel symmetry to the context of BP singularities for general $n$, as stated in Theorem \ref{Thm C}. In Section \ref{sec: Examples}, we  illustrate our results through concrete examples and  establish some derived equivalences for the replicated algebras of the Dynkin type.

	\section{Recollements and Brieskorn-Pham singularities} \label{sec: Recollements and Brieskorn-Pham singularities}
	In this section, we  recall some  basic concepts and facts about recollements, ladders and Cohen-Macaulay representations on Brieskorn-Pham singularities in \cite{HIMO}. Throughout this paper we fix an algebraically closed field $\mathbf{k}$, and denote  by   $D(-):=\Hom_{\mathbf{k}}(-,\mathbf{k})$. All modules are left modules.
	
		\subsection{Recollements and ladders} 	Recall the concepts of the recollements and ladders.
		Let $\mathcal{T}$ and $\mathcal{T'}$ be two arbitrary categories, and 
		\begin{eqnarray}\label{adjoint pair diagram} 
			\xymatrix@!=3pc{\mathcal{T}
				\ar@<1ex>[r]^{F} &\mathcal{T'}
				\ar@<1ex>[l]^{G}
			}
		\end{eqnarray} be a pair of functors between them. The pair $(F,G)$ is called \emph{adjoint pair} if  there exists a functorial isomorphism $$\Hom_{\mathcal{T'}}(FA,B)\simeq \Hom_{\mathcal{T}}(A,GB)$$ for any $A\in \mathcal{T}$ and $B\in \mathcal{T'}$. A sequence of functors $(F,G,H)$ is called an \emph{adjoint triple} if both $(F,G)$ and $(G,H)$ are adjoint pairs.
		
		It is well known that adjoint pairs are compatible with auto-equivalences of categories and also with compositions. More precisely, if $(F,G)$ is an adjoint pair between $\mathcal{T}$ and $\mathcal{T'}$, and there are auto-equivalences $\eta$ of $\mathcal{T}$ and $\eta'$ of $\mathcal{T'}$ respectively, then $(\eta' F\eta, \eta^{-1}G (\eta')^{-1})$ is also an adjoint pair.
		
		\begin{definition} \cite{BBD}
			Let $\mathcal{T}, \mathcal{T}_1, \mathcal{T}_2$ be triangulated categories. A \emph{recollement} of $\mathcal{T}$ relative to $\mathcal{T}_1$ and $\mathcal{T}_2$, denoted by $(\mathcal{T}_1, \mathcal{T}, \mathcal{T}_2)$, is a diagram
			\begin{equation}\label{recollement}
				\xymatrix{
					\mathcal{T}_1\ar[rrr]|{i_{\ast}}
					&&& \mathcal{T}
					\ar@/_1.5pc/[lll]|{i^{\ast}}
					\ar@/^1.5pc/[lll]|{i^!}
					\ar[rrr]|{j^{\ast}}
					&&& \mathcal{T}_2
					\ar@/_1.5pc/[lll]|{j_!}
					\ar@/^1.5pc/[lll]|{j_{\ast}}
				}
			\end{equation}
			of triangulated categories and triangle functors such that
			\begin{itemize}
				\item[(R1)] $(i^{\ast},i_{\ast},i^{!})$ and $(j_!,j^{\ast},j_{*})$ are adjoint triples;
				\item[(R2)]  $i_\ast,\,j_\ast,\,j_!$  are fully faithful;
				\item[(R3)]  $\Im (i_\ast)=\ker (j^{\ast})$. 
			\end{itemize}	
		\end{definition}
		Note that under the conditions (R1) and (R2), the condition (R3) is equivalent to  $\Im (j_!)=\ker (i^{\ast})$ and also to  $\Im (j_{\ast})=\ker (i^!)$. We say that two recollements of $\mathcal{T}$ relative to $\mathcal{T}_1$ and $\mathcal{T}_2$ are \emph{equivalent} if the TTF triples associated to them coincide, that is, the related subcategories $(\Im (j_!),\ker (j^{\ast}), \Im (j_{\ast}))$ coincide.
		
		A \emph{ladder} $\mathcal{L}$ is a finite or infinite diagram of  triangulated categories and triangle functors
		\begin{equation}\label{ladder}
			\xymatrix@C=1cm{
				\mathcal{T}_1 \ar @/^-3pc/[rrr]|{\psi_{1}}_{\vdots}
				\ar @/^3pc/[rrr]_{}|{\psi_{-1}}^{\vdots}
				\ar[rrr]|{\psi_0}
				&&& \mathcal{T}
				\ar@/_1.5pc/[lll]|{\psi^{0}}
				\ar@/^1.5pc/[lll]|{\psi^{1}}
				\ar[rrr]|{\phi^{0}}
				\ar @/^-3pc/[rrr]|{\phi^{1}}_{\vdots}
				\ar @/^3pc/[rrr]_{}|{\phi^{-1}}^{\vdots}
				&&& \mathcal{T}_2
				\ar@/_1.5pc/[lll]|{\phi_{-1}}
				\ar@/^1.5pc/[lll]|{\phi_{0}}
			}
		\end{equation}
		such that any three consecutive rows form a recollement. Multiple occurrence of the same recollement is allowed. The \emph{height} of a ladder is the number of recollements contained in it (counted with multiplicities). A recollement is considered to be a ladder of height one. 	
		A ladder is called \emph{periodic} if there exists a positive integer $n$ such that the $n$-th recollement going upwards (respectively, going downwards) in the ladder is equivalent to the recollement which is considered to be a ladder of height one. The minimal such positive integer $n$ is the \emph{period} of the ladder. If the middle term of a recollement of triangulated categories has Serre duality, then by using the reflecting approach induced by J{\o}gensen \cite{J} we can obtain a ladder.

		\subsection{Brieskorn-Pham singularities} Let  an integer $n\ge 1$. 
		Fix an $n$-tuple $\mathbf{p}=(p_1,\dots,p_n)$ of each integer $p_i \ge 2$, called \emph{weights}. Consider the $\mathbf{k}$-algebra
		\begin{align} \label{GL hypersurface singularities}
			R=R(\mathbf{p}):=\mathbf{k}[X_1,\dots,X_n] / (X_1^{p_1}+ \cdots +X_n^{p_n}).
		\end{align} 
		Let $\L=\L(\mathbf{p})$ be the abelian group on generators $\x_1,\ldots,\x_n,\c$ modulo relations $$p_1 \x_1= \dots=p_n \x_n=:\c.$$ The element $\c$ is called the \emph{canonical element} of $\L$. Each element $\x$ in $\L$ can be uniquely written in a \emph{normal form} as 
		\begin{align}\label{equ:nor}
			\x=\sum_{i=1}^n\lambda_i\x_i+\lambda\c
		\end{align}
		with $ 0\le \lambda_i < p_i$ and $\lambda\in\Z$. We will write $l_i(\x):=\lambda_i$ if necessary.  We can regard $R$ as an $\L$-graded $\mathbf{k}$-algebra by setting $\deg X_i:=\x_i$   for any $i$, hence $R=\bigoplus_{\x\in \L } R_{\x}$, where $R_{\x}$ denotes the homogeneous component of degree $\vec{x}$. 
		The algebra $R$ is called an \emph{$\L$-graded Brieskorn-Pham} (\emph{BP}) \emph{singularity} associated with weights $p_1,\dots,p_n$ in \cite{FU}. The pair $(R,\L)$ is called a \emph{Geigle-Lenzing} (\emph{GL}) \emph{hypersurface singularity} of dimension $n-1$, as introduced in \cite{HIMO}.
		
		Let $\L_+$ be the submonoid of $\L$ generated by $\x_1,\ldots,\x_n,\c$. Then we equip $\L$ with the structure of a partially ordered set: $\x\le \y$ if and only if $\y-\x\in\L_+$. Each element $\x$ of $\L$ satisfies exactly one of the following two possibilities
		\begin{align}\label{two possibilities of x}
			\x\geq 0 \text{\quad or\quad}  \x\leq (n-2)\c+\vec{\omega},
		\end{align}
		where  $\w:=\c-\sum_{i=1}^n \x_i$ is called the \emph{dualizing element} of $\L$. 
		Note that $R_{\x} \neq 0$ if and only if $\x \geq 0$ if and only if $\lambda \geq 0$ in its normal form in (\ref{equ:nor}).  
		
		Denote by  $C$ the \emph{$(\Z\c)$-Veronese subalgebra} of $R$, that is, $C=\bigoplus_{a\in \Z}R_{a\c}$. Clearly, $$C=\mathbf{k}[X_1^{p_1},\dots,X_{n-1}^{p_{n-1}}]$$ is the $(\Z\c)$-graded polynomial algebra in the variables $X_1^{p_1},\dots, X_{n-1}^{p_{n-1}}$.		 Let $\x=\sum_{i=1}^n\lambda_i\x_i+\lambda\c\in \L$ be in normal form. Each homogeneous component $R_{\x}$ has an explicit basis $\{\prod_{i=1}^{n-1} X_i^{\lambda_i+d_ip_i}X_n^{\lambda_n} \mid \sum_{i=1}^{n-1} d_i=\lambda,~ d_i\ge 0 \}$. Moreover, $R$ is a free $C$-module of rank $p_1p_2\dots p_n$ with a basis $\{ X_1^{a_1}\dots X_n^{a_n}\mid 0 \le a_i <p_i\}$, see \cite[Proposition 3.5]{HIMO} for details. 
		
		Denote by $\mod^{\L} R$ the category of finitely generated $\L$-graded $R$-modules. For $M\in \mod^{\L} R$, we write $M=\bigoplus_{\x\in \L } M_{\x}$, where $M_{\x}$ is the homogeneous component of degree $\vec{x}$. For any $\y \in \L$, the \emph{shift module} $M(\y)$ is the same as $M$ as a ungraded module, whereas it is graded such that  $(M(\y))_{\x}=M_{\x+\y}$.	This yields the \emph{degree shift functor} $(\y):\mod^{\L} R\to \mod^{\L} R$, which is clearly an automorphism of $\mod^{\L} R$. 
						
		Note that the subcategory $\proj^{\L} R$ of $\mod^{\L} R$ consisting of all $\L$-graded projective  $R$-modules, is given by $\add \{R(\y)\mid \y\in \L\}$, that is, the subcategory consisting of direct summands of finite direct sums of modules in the set $\{R(\y)\mid \y\in \L\}$.	Note that the algebra $R$ has a unique $\L$-graded maximal ideal $R_+:=\bigoplus_{\x\in \L_{+}}R_{\x}$. Then the set  of all  $\L$-graded simple $R$-modules is  given by $\{\mathbf{k}(\y)\mid \y\in \L\}$.
 		
		\subsection{Cohen-Macaulay representations} 	
		Recall that a module $M\in\mod^{\L} R$ is called $\L$-graded \emph{(maximal) Cohen-Macaulay} if $\Ext_{\mod^{\L}R}^{i}(M,R)=0$ holds for all $i\ge 1$. Denote by $\CM^{\L} R$ the category of $\L$-graded  Cohen-Macaulay $R$-modules.
		 							
		Consider the algebra $C$ now as an $\L$-graded subalgebra of $R$. By the embedding homomorphism $C \to R$ of $\L$-graded algebras, an $\L$-graded $R$-module $M$ induces $\L$-graded $C$-modules $M|_{\x+\Z\c}$ for all $\x\in \L$. Here, $M|_{\x+\Z\c}=\bigoplus_{a\in \Z}M_{\x+a\c}$. The following observation gives an analog of \cite[Lemma 3.3]{C} for BP singularities.
		%	
		
%		\begin{lemma}  Under the above setting, $M\in \mod^{\L} R$ belongs to $\CM^{\L} R$ if and only if $\Ext^{i}_{\mod^{\L} C}(\mathbf{k}(\y),M)=0$ for any $i\le n-2$ and $\y\in\L$. Moreover, this is equivalent to  that all the induced $C$-modules $M|_{\x+\Z\c}$ $(\x\in \L)$ are finite generated projective.		
%		\end{lemma}
			\begin{lemma} \label{CM}  Under the above setting, $M\in \mod^{\L} R$ belongs to $\CM^{\L} R$ if and only if  all the induced $C$-modules $M|_{\x+\Z\c}$ $(\x\in \L)$ are finite generated projective.		
		\end{lemma}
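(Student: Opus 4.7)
The plan is to reduce the defining condition of $\CM^{\L}R$ to a projectivity condition over the regular subalgebra $C$, and then to split the result along the $\Z\c$-cosets. I would proceed in three short steps, in the spirit of \cite[Lemma~3.3]{C}.

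Firstly, I would restrict scalars along the embedding $C\hookrightarrow R$. Using the explicit $C$-basis $\{X_1^{a_1}\cdots X_n^{a_n}:0\le a_i<p_i\}$ of $R$ recalled just above, any $M\in\mod^{\L}R$ becomes a finitely generated $\L$-graded $C$-module, and the $\L$-grading yields the decomposition
\[
M|_C \;=\; \bigoplus_{\x\in\L/\Z\c} M|_{\x+\Z\c}
\]
into $\Z\c$-graded $C$-modules, indexed by the finite group $\L/\Z\c$. Since projectivity is preserved by both direct summands and finite direct sums, each slice $M|_{\x+\Z\c}$ is projective over $C$ if and only if $M$ itself is projective over $C$.

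Secondly, I would compare the Cohen-Macaulay properties over $R$ and $C$. The relation $\sum_{i=1}^{n}X_i^{p_i}=0$ in $R$ forces $X_n^{p_n}\in C_{+}R$, so the ideals $C_{+}R$ and $R_{+}$ share the same radical in $R$. Combined with the fact that $R$ is finite free over $C$, the standard invariance of depth under finite graded ring extensions then gives
\[
\operatorname{depth}_{R_{+}}(M)\;=\;\operatorname{depth}_{C_{+}}(M).
\]
Since $\dim R = \dim C = n-1$, this yields the equivalence: $M$ is maximal Cohen-Macaulay over $R$ if and only if $M$ is maximal Cohen-Macaulay over $C$.

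Thirdly, since $R$ is a graded hypersurface and hence Gorenstein, the vanishing $\Ext^{\ge 1}_R(M,R)=0$ that defines $\CM^{\L}R$ coincides with the maximal Cohen-Macaulay condition over $R$. And since $C=\mathbf{k}[X_1^{p_1},\dots,X_{n-1}^{p_{n-1}}]$ is a polynomial algebra, hence regular, the graded Auslander-Buchsbaum formula identifies maximal Cohen-Macaulay over $C$ with graded projectivity (equivalently, freeness) over $C$. Chaining these three equivalences proves the lemma. The only delicate step is the depth comparison of the second step, which relies on the defining relation to ensure that $R_{+}$ and $C_{+}R$ cut out the same closed point of $\operatorname{Spec} R$; everything else is routine graded commutative algebra.
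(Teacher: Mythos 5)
Your proof is correct. Its overall skeleton is the same as the paper's: reduce the lemma to the equivalence $M\in\CM^{\L}R \Longleftrightarrow M\in\CM^{\L}C$, use regularity of $C$ to turn Cohen--Macaulayness over $C$ into graded projectivity, and split $M|_C$ along the finitely many cosets of $\Z\c$. Where you genuinely diverge is in how the middle equivalence is justified. The paper phrases depth through graded local duality, characterizing $\CM^{\L}R$ by the vanishing of $\Ext^{i}_{\mod^{\L}R}(\mathbf{k}(\y),M)$ for $i\le n-2$, and transfers this vanishing from $R$ to $C$ by the argument of \cite[Theorem 5.1]{GL}. You instead invoke the classical invariance of depth under the module-finite extension $C\subset R$, after observing that the hypersurface relation forces $X_n^{p_n}\in C_+R$ and hence $\sqrt{C_+R}=R_+$, so that $\operatorname{depth}_{R_+}(M)=\operatorname{depth}_{C_+}(M)$ and $\dim R=\dim C=n-1$ finish the comparison. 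Your route is the more elementary commutative-algebra argument and has the merit of making explicit where the defining equation of the Brieskorn--Pham singularity enters (it is exactly what puts the missing variable's power into $C_+R$), whereas the paper's stays within the graded $\Ext$ formalism it uses throughout. Both arguments are complete; I see no gap in yours.
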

		\begin{proof} %We show the first assertion by modifying the third paragraph of the proof of \cite[Theorem 5.1]{GL}.	
			Clearly, $M$ is finite generated if and only if all the induced $C$-modules $M|_{\x+\Z\c}$ $(\x\in \L)$ are finite generated.		
			
			Note that the algebra $R$ is a graded complete intersection of Krull dimension $n-1$ and then $R$ is graded Gorenstein of self-injective dimension $n-1$. By a graded version of local duality,  $M\in \CM^{\L} R$ if and only if $\Ext^{i}_{\mod^{\L} R}(\mathbf{k}(\y),M)=0$ for any $i\le n-2$ and $\y\in\L$. 
			This is equivalent to that $\Ext^{i}_{\mod^{\L} C}(\mathbf{k}(\y),M)=0$ for any $i\le n-2$ and $\y\in\L$
			by using a similar argument as in the proof of the third paragraph of \cite[Theorem 5.1]{GL}. Again by the local duality, this is equivalent to $M\in \CM^{\L} C$. Since the algebra $C$ is  regular which has graded global dimension $n-1$, 
			$M\in \CM^{\L} C$ if and only if $M\in \proj^{\L} C$, which is further equivalent to all the induced $C$-modules $M|_{\x+\Z\c}$ $(\x\in \L)$ are projective.		
%			Since $(X_i^{p_i})_{1\le i<n}$ is an $R$-regular sequence. Hence the corresponding  Koszul complex
%						\begin{equation*}\label{Koszul}
%								0\to R(-\sum_{1\le i<n}\c)\to\cdots\to R(-2\c)^{{\binom{n-1}{2}}}
%								\to R(-\c)^{n-1}\to R\to 0.
%							\end{equation*}	
%			of $R$ is exact except in the rightmost position whose homology is $$E:=R/(X_i^{p_i}\mid 1\le i<n)=\mathbf{k}[X_1,\dots,X_n]/(X_i^{p_i}\mid 1\le i\le n).$$ It is	
			%We claim that $M\in \CM^{\L} R$ if and only if $\Ext^{i}_{\mod^{\L} C}(\mathbf{k}(\y),M)=0$ for any $i\le n-2$ and $\y\in\L$.
		\end{proof}						
		%Note that $\proj^{\L} R\subset \CM^{\L} R$ and that $\CM^{\L} R$
		 Since $R$ is Gorenstein, the category $\CM^{\L} R$  
		is Frobenius, and thus by a general result of Happel \cite{Hap1}, its stable category $\underline{\CM}^{\L}R$ forms a triangulated category. By results of Buchweitz \cite{Bu} and Orlov \cite{Orlov:2009}, there exists a triangle equivalence 
		\begin{align}\label{CM equ.}
			\DDD^{\L}_{\rm sg}(R)\simeq\underline{\CM}^{\L}R,
		\end{align}
		 where $\DDD^{\L}_{\rm sg}(R):=\DDD^{\bo}(\mod^{\L}R)/\KKK^{\bo}(\proj^{\L}R)$
		is the \emph{singularity category} of $R$.

		Consider the composition $$\rho: ~\mod^{\L}R\subset\DDD^{\bo}(\mod^{\L}R)\to\DDD_{\rm sg}^{\L}(R)\xrightarrow{\sim} \underline{\CM}^{\L}R,$$
		where the first inclusion identifies a module as a stalk complex concentrated at degree zero, and the second functor denotes the quotient functor, and the last equivalence  is given by (\ref{CM equ.}).
		
		The following results are well-known.
		
		\begin{proposition}\label{thick}
			\begin{itemize}
				\item[(a)]  $\underline{\CM}^{\L} R=\thick \langle \rho{(\mathbf{k})}(\y) \mid \y\in \L \rangle$.
				\item[(b)] \emph{(Kn\"orrer periodicity)} Let $(R',\L')$ be an $\L'$-graded  BP  singularity with weights $2,p_1,\dots,p_n$. Then there is a triangle equivalence $\underline{\CM}^{\L} R \simeq \underline{\CM}^{\L'} R'$.
			\end{itemize}			
		\end{proposition}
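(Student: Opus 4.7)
My plan is to handle (a) and (b) separately, both via standard reductions to classical techniques in graded commutative algebra.

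For part (a), the strategy is first to establish the stronger statement $\DDD^{\bo}(\mod^{\L}R)=\thick\langle R(\y),\mathbf{k}(\y)\mid\y\in\L\rangle$, and then pass through the Verdier quotient $\DDD^{\bo}(\mod^{\L}R)\to\DDD^{\L}_{\rm sg}(R)$ (which kills the projectives $R(\y)$) together with the equivalence (\ref{CM equ.}). For the stronger statement, I would use the graded prime filtration: any $M\in\mod^{\L}R$ admits a finite filtration with subquotients of the form $(R/\mathfrak{p})(\vec a)$ for graded primes $\mathfrak{p}\subset R$, reducing the problem to cyclic modules $R/\mathfrak{p}$. I would then induct on $\dim(R/\mathfrak{p})$: the zero-dimensional case reduces to finite-length modules, which belong to $\thick\langle\mathbf{k}(\y)\mid\y\in\L\rangle$ via a composition series; the inductive step uses a homogeneous nonzerodivisor $r\in R_+$ on $R/\mathfrak{p}$ and the short exact sequence
\[
0\to (R/\mathfrak{p})(-\deg r)\xrightarrow{\cdot r} R/\mathfrak{p}\to R/(\mathfrak{p}+rR)\to 0,
\]
whose right-hand term has strictly smaller support dimension.

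For part (b), I would invoke a graded version of Kn\"orrer periodicity via matrix factorizations. Writing $f=\sum_{i=1}^n X_i^{p_i}$ and $S=\mathbf{k}[X_1,\dots,X_n]$, each $M\in\CM^{\L}R$ corresponds to an $\L$-graded matrix factorization $(\phi,\psi)$ of $f$ over $S$. The doubling
\[
\Phi=\begin{pmatrix}\phi & X_0\\ -X_0 & \psi\end{pmatrix},\qquad \Psi=\begin{pmatrix}\psi & -X_0\\ X_0 & \phi\end{pmatrix}
\]
produces an $\L'$-graded matrix factorization of $f+X_0^2$ over $S[X_0]$, with one summand shifted by $\x_0$ so that the off-diagonal entries are $\L'$-homogeneous (recall $2\x_0=\c$ in $\L'$). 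I would verify that this assignment descends to a triangle functor $\underline{\CM}^{\L}R\to\underline{\CM}^{\L'}R'$, and exhibit a quasi-inverse by reducing $\L'$-graded matrix factorizations of $f+X_0^2$ modulo $X_0$; this is the $\L'$-graded incarnation of Kn\"orrer's classical contraction argument.

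The main obstacle is the grading bookkeeping in (b): the grading group enlarges from $\L$ to $\L'$ by the new generator $\x_0$, and one must verify that the doubling functor intertwines the degree-shift autoequivalences $(\y)$ on both sides for all $\y\in\L$, while also matching the suspension functors coming from the two Frobenius structures on $\CM^{\L}R$ and $\CM^{\L'}R'$. Part (a) is comparatively routine once the prime-filtration reduction is in place, since generation is preserved under Verdier quotients and triangle equivalences.
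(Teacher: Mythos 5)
Part (a) of your proposal has a genuine gap, and it sits exactly where the real content of the statement lies. In your inductive step you consider the exact sequence
\[
0\to (R/\mathfrak{p})(-\deg r)\xrightarrow{\cdot r} R/\mathfrak{p}\to R/(\mathfrak{p}+rR)\to 0
\]
and note that the right-hand term lies in $\thick\langle R(\y),\mathbf{k}(\y)\rangle$ by induction. But the resulting triangle only tells you that the \emph{cone} of $\cdot r$ lies in that thick subcategory; it does not put $R/\mathfrak{p}$ itself there, since you know only one of the other two terms, not two out of three. The induction therefore does not close. This is not a repairable bookkeeping issue: the statement $\DDD^{\bo}(\mod^{\L}R)=\thick\langle R(\y),\mathbf{k}(\y)\mid\y\in\L\rangle$ is \emph{equivalent} to $\DDD^{\L}_{\rm sg}(R)=\thick\langle\rho(\mathbf{k})(\y)\rangle$, and for a graded Gorenstein ring this holds if and only if the singularity is ($\L$-)isolated. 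For a non-isolated hypersurface such as $\mathbf{k}[x,y,z]/(xy)$, the module $R/(x)$ has stable endomorphism ring $\mathbf{k}[z]$, which is not of finite length, whereas every object of $\thick\langle\mathbf{k}(\y)\rangle$ has finite-length stable Hom spaces; so $R/(x)$ is a counterexample to your intermediate claim, even though your prime-filtration argument would "prove" it. The paper's proof of (a) is precisely the observation that $R$ has $\L$-isolated singularities (citing \cite[Theorem 3.32]{HIMO}) followed by a graded version of \cite[Proposition A.2]{KMV}; any correct argument must use the isolated-singularity hypothesis, and yours nowhere does.

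For part (b) the paper simply cites Kn\"orrer and \cite[Corollary 4.23]{HIMO}, so there is nothing to compare in detail; your matrix-factorization doubling is the standard route. One caution: the one-variable stabilization $f\rightsquigarrow f+X_0^2$ is \emph{not} an equivalence in the ungraded setting (classical Kn\"orrer periodicity has period two), and it becomes one here only because the grading group enlarges to $\L'$ with $2\x_0=\c$. You flag the grading bookkeeping as the main obstacle, which is the right instinct, but the verification that the doubling functor is essentially surjective and fully faithful in the graded setting is the actual content and would need to be carried out (or cited) rather than asserted.
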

		\begin{proof} (a) By \cite[Theorem 3.32]{HIMO}, $R$ has $\L$-isolated singularities. Thus
		the claim follows from a graded version of \cite[Proposition A.2]{KMV}; also see \cite[Theorem 4.10]{HIMO}.
			
			(b) This is well-known, we refer to \cite{Kn}; also see \cite[Corollary 4.23]{HIMO}.
		\end{proof}
		
		\begin{theorem}\cite[Theorem 4.3]{HIMO} \label{Auslander-Reiten-Serre duality} (Auslander-Reiten-Serre duality) For any $X,Y\in \underline{\CM}^{\L}R$, there exists an isomorphism 
			\begin{equation*}\label{Auslander-Reiten-Serre duality CM}
				\Hom_{\underline{\CM}^{\L}R}(X,Y)\simeq D\Hom_{\underline{\CM}^{\L}R}(Y,X(\w)[n-2]).
			\end{equation*}	
			In other word, $\underline{\CM}^{\L} R$ has a Serre functor $(\w)[n-2]$.
		\end{theorem}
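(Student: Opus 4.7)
The plan is to derive the Serre duality from two ingredients: the identification of the $\L$-graded canonical module of $R$, and the classical Auslander-Reiten-Serre duality for maximal Cohen-Macaulay modules over a graded Gorenstein isolated singularity.

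First, I would compute the $\L$-graded canonical module $\omega_R$. Realize $R=S/(f)$ as a hypersurface in the polynomial ring $S=\mathbf{k}[X_1,\ldots,X_n]$ with $\deg X_i=\x_i$ and $\deg f=\c$. The graded canonical module of $S$ is $\omega_S=S(-\sum_i\x_i)$. Applying $\Hom_S(-,\omega_S)$ to the short exact sequence $0\to S(-\c)\xrightarrow{\,f\,} S\to R\to 0$ produces
\begin{equation*}
\omega_R \;\cong\; \Ext^1_S(R,\omega_S) \;\cong\; \bigl(\omega_S/f\omega_S\bigr)(\c) \;\cong\; R\bigl(\c-\textstyle\sum_{i=1}^{n}\x_i\bigr) \;=\; R(\w).
\end{equation*}
In particular, $R$ is $\L$-graded Gorenstein of Krull dimension $d=n-1$ with dualizing shift $\w$. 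I would then invoke the $\L$-isolated singularity property of $R$ from \cite[Theorem 3.32]{HIMO}, which ensures that $\underline{\CM}^{\L}R$ is Hom-finite over $\mathbf{k}$ and Krull-Schmidt, and that the Ext groups appearing in Serre duality are finite-dimensional.

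Second, I would apply the graded Auslander-Reiten-Serre duality for maximal Cohen-Macaulay modules over a Gorenstein isolated singularity of Krull dimension $d$: for $X,Y\in\CM^{\L}R$, there is a natural isomorphism
\begin{equation*}
\underline{\Hom}_R(X,Y) \;\cong\; D\,\underline{\Ext}^{\,d-1}_R\!\bigl(Y,\, X\otimes_R\omega_R\bigr).
\end{equation*}
Because $\omega_R=R(\w)$ the tensor reduces to the degree shift $(\w)$, and in the triangulated stable category $\underline{\Ext}^{\,d-1}_R(Y,-)$ is computed as $\Hom_{\underline{\CM}^{\L}R}(Y,\,(-)[d-1])$. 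Substituting $d-1=n-2$ yields precisely the stated duality, and Yoneda then identifies $(\w)[n-2]$ as a Serre functor.

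The main obstacle is establishing the graded Serre duality formula above. Its proof rests on graded local duality $H^d_{R_+}(M)\cong D\,\Hom_R(M,\omega_R)$ for $M\in\CM^{\L}R$ (available because $R$ is Gorenstein of dimension $d$), combined with Auslander-Bridger/Nakayama functor identities that convert $\underline{\Hom}$ into a dimension-shifted Ext group by means of a complete projective resolution. The isolated-singularity hypothesis enters at exactly the step where these Ext groups must be finite-dimensional over $\mathbf{k}$ and vanish in intermediate degrees, and one must track the $\L$-grading carefully through the local cohomology computation so that the dualizing shift $\w$ appears with the correct sign.
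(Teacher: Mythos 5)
Your proposal is correct in outline, but note that the paper itself offers no proof of this statement: it is imported verbatim as \cite[Theorem 4.3]{HIMO}. Your reconstruction --- computing $\omega_R\cong R(\w)$ from the hypersurface presentation $0\to S(-\c)\xrightarrow{f}S\to R\to 0$ (the grading bookkeeping checks out, since $\coker(\omega_S\xrightarrow{f}\omega_S(\c))\cong R(-\textstyle\sum_i\x_i+\c)=R(\w)$), and then invoking graded Auslander--Reiten--Serre duality for a Gorenstein $\L$-isolated singularity of dimension $d=n-1$ to get the Serre functor $(\w)[d-1]$ --- is exactly the standard argument that the cited reference carries out. The only caveat is that the substantive step, the duality formula $\underline{\Hom}(X,Y)\cong D\,\underline{\Hom}(Y,(X\otimes_R\omega_R)[d-1])$ itself, is still only sketched via graded local duality; you correctly identify where the isolated-singularity hypothesis and the grading of the dualizing shift enter, but a self-contained proof would have to supply those details rather than defer them.
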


		A simple but important example is the following.
		
		\begin{example} Let $n=1$ and assume that $(R,\L)=(\mathbf{k}[X_1]/(X_1^{p_1}),\langle\x_1\rangle)$ with $p_1\ge 2$. Then  ${\CM}^{\L} R\simeq \mod^{\L} R$ and thus $\underline{\CM}^{\L} R\simeq \underline{\mod}^{\L} R\simeq \DDD^{\bo}(\mod \mathbf{k}\vec{\AA}_{p_1-1})$. Here $\vec{\AA}_{p_1-1}$ is the path algebra of the following quiver: 
			\[
			\begin{xy} 0;<4pt,0pt>:<0pt,4pt>:: 
				(0,0) *+{1} ="0",
				(10,0) *+{2} ="1",
				(20,0) *+{\dots} ="11",
				(32,0) *+{p_1-2} ="111",
				(44,0) *+{p_1-1} ="1111",
				"0", {\ar"1"},
				"1", {\ar"11"},
				"11", {\ar"111"},
				"111", {\ar"1111"},
			\end{xy}
			\]	
		\end{example}
		
		For convenient, we denote ${\Hom}_{\underline{\CM}^{\L}R}(-,-)$ by  $\underline{\Hom}(-,-)$.	
		
%	

%	\section{Reduction functors and insertion functors}
	\section{A ladder for Brieskorn-Pham  singularities}\label{sec: A ladder for Brieskorn-Pham  singularities}
	In this section, we introduce the reduction and insertion functors, and
	%that 	are parallel to the functors as constructed in  \cite[Section 4]{C}.  We 
	investigate the action of these functors on certain Cohen-Macaulay modules.
	Further, we give an explicit construction of a ladder for Brieskorn-Pham  singularities.
	The treatment of this section is parallel to the exposition in \cite{C}.

	Throughout this section, we let $R$ be an $\L$-graded BP singularity with weights $p_1,\ldots,p_{n-1}, p_n$. For $j=1,2$, let $R^{j}$ be an $\L_j$-graded BP singularity  with weights $p_1,\ldots,p_{n-1},p_{j,n}$, where $p_{1,n}+p_{2,n}=p_n+1$. Thus
	\begin{eqnarray*} 
		&	R^{j}=\mathbf{k}[X_1,\dots,X_n] / (X_1^{p_1}+ \cdots+X_{n-1}^{p_{n-1}} +X_n^{p_{j,n}}),&\\
		&\L_j=\langle\c_j,\x_{j,1},\ldots,\x_{j,n}\rangle/\langle p_{j,n}\x_{j,n}-\c_j,~p_{i}\x_{j,i}-\c_j\mid1\le i\le n-1\rangle.&
	\end{eqnarray*}
	Consider the injective map $$\theta_j:\L_{j}\to \L$$ which sends an element $\x=\sum_{i=1}^n\lambda_i\x_{j,i}+\lambda\c_j$ to $\theta_j(\x)=\sum_{i=1}^n\lambda_i\x_i+\lambda\c$. Here $\x\in\L_j$ is in its normal form, that is, $0 \le \lambda_i < p_i$ with $1\le i <n$, $0\le \lambda_n < p_{j,n}$ and $\lambda \in \Z$. Note that  the map $\theta_j$ is generally not  a homomorphism of groups. We also observe that $\x\in\L$ belongs to the image of $\theta_j$ if and only if $\lambda_n < p_{j,n}$.

		\subsection{Reduction functor $\phi_{j,k}$ and insertion functor $\psi_{j,k}$} \label{RI functors}	
	For each $j=1,2$, a pair of functors $(\phi_{j,0},\psi_{j,0})$ is defined as follows.
	%Recall that for any $M\in \mod^{\L} R$,
	The functor $\phi_{j,0}: \mod^{\L} R \to \mod^{\L_j} R^j$ is given by $\phi_{j,0}(M)=\bigoplus_{\x\in \L_j} \phi_{j,0}(M)_{\x}$, where 
	%for any $\x=\sum_{i=1}^n\lambda_i\x_{j,i}+\lambda\c_j\in \L_j$, 
	$$ \phi_{j,0}(M)_{\x}:= M_{\theta_j(\x)+(p_n-p_{j,n})\x_n}.$$
	Conversely,   $\psi_{j,0}: \mod^{\L_j} R^j \to \mod^{\L} R$ is given by $\psi_{j,0}(N)=\bigoplus_{\x\in \L} \psi_{j,0}(N)_{\x}$, where 
	%for any $\x=\sum_{i=1}^n\lambda_i\x_i+\lambda\c\in \L$,
	\[\psi_{j,0}(N)_{\x}:=\left\{\begin{array}{ll}
		N_{\theta_j^{-1}(\x-\lambda_n \x_n)} & \text{if }~0\le \lambda_n< p_n-p_{j,n},\\
		N_{\theta_j^{-1}(\x-(p_n-p_{j,n})\x_n)} &\text{otherwise}.
	\end{array}\right.\]				
	
	We point out that both the functors $\phi_{j,0}$ and $\psi_{j,0}$ are analogs of the functors considered in \cite[Section 4]{C}, see also
	\cite[Section 9]{GL2}. Using the reflecting approach introduced by J{\o}gensen \cite{J}, $\phi_{j,0}$ and $\psi_{j,0}$ can be extended  
	 as follows: for each $j=1,2$ and $k\in \Z$, we denote by
	 \begin{equation}\label{Reduction/insertion functor}
	 	\phi_{j,k}:=(-k\x_{j,n})\phi_{j,0}(k\x_n)\ \text{ and }\ \psi_{j,k}:=(-k\x_n)\psi_{j,0}(k\x_{j,n}).
	 \end{equation}
	We call $\phi_{j,k}$ (resp. $\psi_{j,k}$) \emph{reduction functor} (resp. \emph{insertion functor}).
	
	 %By using the reflecting approach iuduced by J{\o}gensen \cite{[J]}, we extend $\phi_{j,0}$ and $\psi_{j,0}$ to $\phi_{j,k}$ and $\psi_{j,k}$ for $k\in \Z$ for constructing a ladder.
	
	Note that the action of $\c_j$ and $\x_{j,i}$ ($1\le i<n$) on $\phi_{j,0}(M)$ is induced by the image of the one on $M$, that is, $\phi_{j,0}(M)(\c_j)=\phi_{j,0}(M(\c))$ and $\phi_{j,0}(M)(\x_{j,i})=\phi_{j,0}(M(\x_i))$, while the action of  $\x_{j,n}$ on $\phi_{j,0}(M)$ is given by
	\[\big(\phi_{j,0}(M)(\x_{j,n})\big)_{\x}=\left\{\begin{array}{ll}
		M_{\theta_j(\x)+(2p_n-2p_{j,n}+1)\x_n} & \text{if }~\lambda_n=p_{j,n}-1 ,\\
		M_{\theta_j(\x)+(p_n-p_{j,n}+1)\x_n} &\text{otherwise}.
	\end{array}\right.\]	
	The action of $\c$ and $\x_{i}$ ($1\le i<n$) on $\psi_{j,0}(N)$ is induced by the preimage of the one on $N$, that is, $\psi_{j,0}(N)(\c)=\psi_{j,0}(N(\c_j))$ and $\psi_{j,0}(N)(\x_{i})=\psi_{j,0}(N(\x_{j,i}))$, while the action of  $\x_{n}$ on $\psi_{j,0}(N)$ is given by
	\[(\psi_{j,0}(N)(\x_n))_{\x}=\left\{\begin{array}{ll}
		N_{\theta_j^{-1}(\x-\lambda_n \x_n)} &\text{if }~0\le \lambda_n< p_n-p_{j,n},\\
		N_{\theta_j^{-1}(\x-(p-p_{j,n}-1)\x_n)} &\text{otherwise}.
	\end{array}\right.\]

	\begin{lemma} \label{RI-adjoint triple} For any $j=1,2$ and $k\in \Z$,  the following assertions hold.
		\begin{itemize}
			\item[(a)]  The functor $\psi_{j,k}$ is fully faithful. 
			\item[(b)] 	$(\phi_{j,k},\psi_{j,k},\phi_{j,k+1})$ forms an adjoint triple.
			\item[(c)]  Both $\phi_{j,k}$ and $\psi_{j,k}$ are exact functors. 
			\item[(d)]  Both $\phi_{j,k}$ and $\psi_{j,k}$ preserve Cohen-Macaulay modules.
		\end{itemize}		
	\end{lemma}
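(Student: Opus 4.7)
Since $\phi_{j,k}=(-k\x_{j,n})\phi_{j,0}(k\x_n)$ and $\psi_{j,k}=(-k\x_n)\psi_{j,0}(k\x_{j,n})$ are obtained from $\phi_{j,0}$ and $\psi_{j,0}$ by conjugating with the degree-shift auto-equivalences of $\mod^{\L}R$ and $\mod^{\L_j}R^j$, and each of the four asserted properties is stable under such conjugation, it suffices to prove the lemma in the case $k=0$. Exactness (c) is then immediate: $\phi_{j,0}$ and $\psi_{j,0}$ are defined on each homogeneous component by identifying it, as a $\mathbf{k}$-vector space, with a single homogeneous component of the source, so they carry short exact sequences of $\L$-graded (resp.\ $\L_j$-graded) modules to sequences which are exact in each degree.

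For (a) and (b) I would first verify that the counit $\phi_{j,0}\psi_{j,0}\Rightarrow\mathrm{Id}_{\mod^{\L_j}R^j}$ is an isomorphism. Concretely, for $\x\in\L_j$ in normal form with $0\le \lambda_n<p_{j,n}$, the normal form of $\theta_j(\x)+(p_n-p_{j,n})\x_n$ in $\L$ has $\x_n$-coefficient $\lambda_n+p_n-p_{j,n}\in[p_n-p_{j,n},p_n-1]$, so the \emph{otherwise} branch in the definition of $\psi_{j,0}$ applies and returns $N_{\x}$. A short case check against the two branches $\lambda_n=p_{j,n}-1$ and $\lambda_n<p_{j,n}-1$ of the formula for the $\x_{j,n}$-action on $\phi_{j,0}(-)$ shows that this identification respects the $R^j$-module structure. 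To establish the adjunction $(\phi_{j,0},\psi_{j,0})$, I would define the unit $\eta_M\colon M\to\psi_{j,0}\phi_{j,0}(M)$ degree-wise: on $M_{\x}$ with $0\le\lambda_n<p_n-p_{j,n}$ it is multiplication by $X_n^{p_n-p_{j,n}-\lambda_n}$, and on $M_{\x}$ with $\lambda_n\ge p_n-p_{j,n}$ it is the identity; the triangle identities then reduce to elementary calculations with the normal form. The second adjunction $(\psi_{j,0},\phi_{j,1})$ is the twisted analog, obtained by the same degree-wise construction together with the shift $(\x_n)$ built into $\phi_{j,1}$; alternatively, I would adapt the argument of \cite[Proposition~4.2]{C} verbatim, since the relevant formulas in Section~4 of \cite{C} generalize from $n=3$ to arbitrary $n$ by treating the weights $p_1,\ldots,p_{n-1}$ symmetrically. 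Full faithfulness of $\psi_{j,0}$ in (a) is then the formal consequence that the counit of an adjunction is an isomorphism.

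For (d) I would invoke Lemma \ref{CM}: a module $M\in\mod^{\L}R$ (resp.\ $N\in\mod^{\L_j}R^j$) lies in $\CM^{\L}R$ (resp.\ $\CM^{\L_j}R^j$) if and only if every restriction $M|_{\y+\Z\c}$ (resp.\ $N|_{\y+\Z\c_j}$) is finitely generated projective over the common Veronese subalgebra $C=\mathbf{k}[X_1^{p_1},\dots,X_{n-1}^{p_{n-1}}]$; note that $C$ is independent of the last weight. The definition of $\phi_{j,0}$ furnishes the identification of $C$-modules $\phi_{j,0}(M)|_{\y+\Z\c_j}= M|_{\theta_j(\y)+(p_n-p_{j,n})\x_n+\Z\c}$, which is projective whenever $M$ is Cohen--Macaulay. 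Similarly, the piecewise definition of $\psi_{j,0}$ shows that for each $\y\in\L$ the restriction $\psi_{j,0}(N)|_{\y+\Z\c}$ agrees as a $C$-module with $N|_{\y'+\Z\c_j}$ for a canonical $\y'\in\L_j$ determined by the normal form of $\y$, again giving projectivity.

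The main technical obstacle will be the degree-wise verification of the triangle identities for the adjunction $(\psi_{j,0},\phi_{j,1})$, together with the compatibility of the counit in (a) with the $\x_{j,n}$-action, because the piecewise formula for $\psi_{j,0}$ and the twist by $X_n$ built into the $\x_{j,n}$-action on $\phi_{j,0}$ interact across the boundaries $\lambda_n=p_n-p_{j,n}$ and $\lambda_n=p_{j,n}-1$, requiring several separate case checks where the normal form of a sum of degrees crosses into a new fundamental domain for $\L$. Once these boundary cases are handled, the remainder of the argument is bookkeeping parallel to \cite{C}.
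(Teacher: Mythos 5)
Your proposal is correct and follows essentially the same route as the paper: reduce to $k=0$ via conjugation by degree shifts, verify the two adjunctions by explicit degree-wise formulas involving multiplication by powers of $X_n$ (the paper writes these as hom-set bijections rather than unit/counit plus triangle identities, but the computations are identical), and deduce (d) from Lemma \ref{CM} by identifying the restrictions to $\y+\Z\c$ as $C$-modules. The only minor divergence is that the paper obtains (c) as a formal consequence of (b), whereas you check exactness directly degree by degree; both are valid.
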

	\begin{proof} It suffices to show that the assertion holds for $k=0$. The proof is parallel to \cite[Lemmas 4.1--4.3]{C}. For the convenience of the reader, we include the proof.
		
		(a) It is obvious from the construction of $\psi_{j,0}$.
		
		(c) This is an immediate consequence of (b).
		
		(b) Let $M\in \mod^{\L} R$ and $N\in \mod^{\L_j} R^{j}$. We first give the construction of the isomorphism $\alpha:\Hom_{\mod^{\L_j} R^j}(\phi_{j,0}(M),N)\simeq\Hom_{\mod^{\L} R}(M,\psi_{j,0}(N))$.  It sends $f:\phi_{j,0}(M)\to N$ to $\alpha(f):M\to \psi_{j,0}(N)$ such that 
		$M_{\x}\to (\psi_{j,0}(N))_{\x}$ ($\x\in \L$) is given by the restriction of $f$ for the case $p_n-p_{j,n}\le \lambda_n<p_n$, and it is given by the composition $M_{\x}\xrightarrow{X_n^{p_n-p_{j,n}-\lambda_n}}M_{\x+(p_n-p_{j,n}-\lambda_n)\x_n}=\phi_{j,0}(M)_{\theta_j^{-1}(\x-\lambda_n \x_n)}\xrightarrow{f} N_{\theta_j^{-1}(\x-\lambda_n \x_n)}=(\psi_{j,0}(N))_{\x}$ otherwise. 
		%Then one can check that $(\phi_{j,0},\psi_{j,0})$ is an adjoint pair.
		 		
		On the other hand, we describe the isomorphism $\beta:\Hom_{\mod^{\L} R}(\psi_{j,0}(N),M)\simeq\Hom_{\mod^{\L_j} R^j}(N,\phi_{j,1}(M))$. It sends $g:\psi_{j,0}(N)\to M$ to $\beta(g):N\to \phi_{j,1}(M)$  such that $N_{\x}\to \phi_{j,1}(M)_{\x}$ ($\x\in\L_j$) is given by $N_{\x}=(\psi_{j,0}(N))_{\theta_j(\x)}\xrightarrow{g}M_{\theta_j(\x)}=\phi_{j,1}(M)_{\x}$ for the case $\lambda_n=0$, and
		it is given by $N_{\x}=(\psi_{j,0}(N))_{\theta_j(\x)+(p_n-p_{j,n})\x_n}\xrightarrow{g} M_{\theta_j(\x)+(p_n-p_{j,n})\x_n}=\phi_{j,1}(M)_{\x}$ otherwise.
		
		(d) %We first show that $\phi_{j,0}$ preserves Cohen-Macaulay modules. 
		Let $M\in \CM^{\L} R$ and $\x\in \L_j$. Recall that $C=\mathbf{k}[X_1^{p_1},\dots,X_{n-1}^{p_{n-1}}]$. Then we have
		$\phi_{j,0}(M)|_{\x+\Z\c_j}=M_{\theta_j(\x)+(p_n-p_{j,n})\x_n+\Z\c}$ as $C$-module. By Lemma \ref{CM}, we have that $\phi_{j,0}$ preserves Cohen-Macaulay modules.				
		On the other hand, let $N\in \CM^{\L_j} R^j$ and $\z=\sum_{i=1}^{n}z_i\x_i+z\c \in \L$ be in normal form. Then as $C$-module 
		$\psi_{j,0}(N)|_{\z+\Z\c}=N_{\theta_j^{-1}(\z-z_n \x_n)+\Z\c}$  if $0\le z_n<p_n-p_{j,n}$, and $N_{\theta_j^{-1}(\z-(p_n-p_{j,n})\x_n)+\Z\c} $ otherwise.  By Lemma \ref{CM}, we have that $\psi_{j,0}$ preserves Cohen-Macaulay modules.
	\end{proof}
	
	The following observation induces that both the functors $\phi_{j,k}$ and $\psi_{j,k}$ preserve projective modules.
	\begin{lemma} \label{preserve projective modules} For any $j=1,2$ and $k\in \Z$,  the following assertions hold.
		\begin{itemize}
			\item[(a)] Let $\y=\sum_{i=1}^{n}y_i\x_{i}+y\c\in \L$ be in normal form. Then 
			\[\phi_{j,k}(R(\y))= \left\{\begin{array}{ll}
				R^j (\theta_j^{-1}(\y-(bp_n-k)\x_n)+(bp_{j,n}-k)\x_{j,n})	&\text{if }~ 0\le a < p_{j,n},\\
				R^j	(\theta_j^{-1}(\y-y_n\x_n)+((b+1)p_{j,n}-k)\x_{j,n})&\text{otherwise}. 
			\end{array}\right.\]
			Here $(y_n+k)\x_n=a\x_n+b\c$ be written in normal form in $\L$.
			\item[(b)]	Let $\y=\sum_{i=1}^{n}y_i\x_{j,i}+y\c_j\in \L_j$ be in normal form. Then $$\psi_{j,k}(R^{j}(\y))= R(\theta_j(\y-(bp_{j,n}-k)\x_{j,n})+(bp_n-k)\x_n).$$ 	
			Here $(y_n+k)\x_{j,n}=a\x_{j,n}+b\c_j$ be written in normal form in $\L_j$.
		\end{itemize}
	\end{lemma}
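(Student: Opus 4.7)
My plan is to reduce both assertions to the case $k=0$ by unfolding the definitions
\[
\phi_{j,k}(R(\y)) = \phi_{j,0}(R(\y+k\x_n))(-k\x_{j,n}), \qquad \psi_{j,k}(R^j(\y)) = \psi_{j,0}(R^j(\y+k\x_{j,n}))(-k\x_n),
\]
so that the pair $(a,b)$ appearing in the statement is exactly the $\x_n$-coefficient and $\c$-coefficient carry when $(y_n+k)\x_n$ is put into $\L$-normal form (respectively $\L_j$-normal form for part (b)). Once this reduction is made, the shift $(-k\x_{j,n})$ (resp.\ $(-k\x_n)$) accounts for the $(bp_{j,n}-k)\x_{j,n}$ correction (resp.\ $(bp_n-k)\x_n$ correction) in the answer. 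After this reduction, it suffices to identify $\phi_{j,0}(R(\y'))$ and $\psi_{j,0}(R^j(\y'))$ for $\y'$ in normal form.

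For part (b), I would fix $\y\in\L_j$ in normal form and unpack
\[
\psi_{j,0}(R^j(\y))_{\x} = \begin{cases} R^j_{\theta_j^{-1}(\x-\lambda_n\x_n)+\y}, & 0\le\lambda_n<p_n-p_{j,n},\\ R^j_{\theta_j^{-1}(\x-(p_n-p_{j,n})\x_n)+\y}, & \text{otherwise.}\end{cases}
\]
I would produce a degree-zero $R$-linear morphism $R(\theta_j(\y))\to\psi_{j,0}(R^j(\y))$ sending the cyclic generator $1\in R(\theta_j(\y))_{-\theta_j(\y)}$ to $1\in R^j_0\subseteq\psi_{j,0}(R^j(\y))_{-\theta_j(\y)}$, extended through the $R$-action recipes recorded after (\ref{Reduction/insertion functor}). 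Bijectivity is then checked componentwise: using the $C$-basis $\{X_1^{a_1}\cdots X_n^{a_n}\mid 0\le a_i<p_i\}$ of $R$ and the analogous basis of $R^j$, both sides have $\mathbf{k}$-dimension $\binom{n-2+\lambda}{n-2}$ with the same $\lambda\ge 0$, because the case split $\lambda_n<p_n-p_{j,n}$ versus $\lambda_n\ge p_n-p_{j,n}$ precisely matches the normal-form carry on the $R$-side.

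For part (a), I would apply the same idea to
\[
\phi_{j,0}(R(\y'))_{\x} = R_{\theta_j(\x)+(p_n-p_{j,n})\x_n+\y'},
\]
writing $\y'=\sum_{i<n} y_i\x_i+a\x_n+(y+b)\c$ and $\x=\sum\lambda_i\x_{j,i}+\lambda\c_j$ in their respective normal forms. The $\x_n$-coefficient in the target degree is $\lambda_n+a+(p_n-p_{j,n})$, and the dichotomy $\lambda_n+a<p_{j,n}$ vs.\ $\lambda_n+a\ge p_{j,n}$ triggers a unit carry to the $\c$-coefficient, mirroring the carry of $\lambda_n+z_n$ modulo $p_{j,n}$ inside $R^j(\z)$. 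Depending on whether the input satisfies $a<p_{j,n}$ (Case 1) or $p_{j,n}\le a<p_n$ (Case 2), I would then take $\z$ to be the element given in the statement and exhibit the analogous cyclic generator; the binomial-coefficient dimension count through the $C$-basis of $R^j$ again identifies the two modules componentwise.

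The main obstacle is purely the bookkeeping in part (a): I must check that, after fixing $\z$ as claimed, the reduction of $\lambda+y$ by the $\c_j$-carries $\sum_{i<n}\lfloor(\lambda_i+y_i)/p_i\rfloor+\lfloor(\lambda_n+z_n)/p_{j,n}\rfloor$ on the $R^j$-side equals the reduction on the $R$-side, which is $\lambda+y+\sum_{i<n}\lfloor(\lambda_i+y_i)/p_i\rfloor$ plus a carry from $\x_n$ to $\c$ that fires exactly when $\lambda_n+a\ge p_{j,n}$. The extra $(b+1)p_{j,n}-k$ rather than $bp_{j,n}-k$ in Case 2 absorbs this mismatch, so the two normal forms agree, and bijectivity then follows automatically from the dimension identity.
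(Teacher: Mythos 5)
Your proposal is correct and follows essentially the same route as the paper: reduce to $k=0$ via the definition $\phi_{j,k}=(-k\x_{j,n})\phi_{j,0}(k\x_n)$ (and its $\psi$ analogue), then identify $\phi_{j,0}(R(\y))$ and $\psi_{j,0}(R^j(\y))$ degreewise using the explicit monomial bases of the homogeneous components, with the case split governed by the normal-form carry of the $\x_n$-coefficient. The only presentational difference is that the paper builds the isomorphism component by component by matching monomial bases, whereas you define one map on the cyclic generator and invoke a dimension count; just note that equal dimensions alone do not give bijectivity, so you still need the (routine) check that the monomials acting on the generator span each component, which is exactly what the paper's basis matching makes explicit.
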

	
	\begin{proof} We only prove  (a), since (b) can be shown similarly.
		
		(a) First we show the assertion for $k = 0$, that is, $\phi_{j,0}(R(\y))\simeq  R^j(\theta_j^{-1}(\y))$ if $0\le y_n < p_{j,n}$, and $R^j(\theta_j^{-1}(\y-y_n\x_n)+\c_j)$ otherwise. Let $\x=\sum_{i=1}^{n}\lambda_i\x_{j,i}+\lambda\c_j\in \L_j$ be in normal form. We consider the following two cases.
		
		\emph{Case $1$}:  $0\le y_n < p_{j,n}$. Let $\vec r=\x+\theta_j^{-1}(\y)\in \L_j$ and write $\vec r=\sum_{i=1}^{n}r_i\x_{j,i}+r\c_j$ in normal form. If $0\le \lambda_n<p_{j,n}-y_n$, then we have $\theta_j(\vec r)=\theta_j(\x)+\y$. In this case, $\phi_{j,0}(R(\y))_{\x}=R_{\theta_j(\vec r)+(p_n-p_{j,n})\x_n}$ has a basis $\{\prod_{i=1}^{n-1} X_i^{r_i+d_ip_i} X_n^{r_n+p_n-p_{j,n}}\mid \sum_{i=1}^{n-1} d_i=r,~ d_i\ge 0\}$ and $R^j(\theta_j^{-1}(\y))_{\x}=R_{\vec r}^j$ has a basis $\{\prod_{i=1}^{n-1} X_i^{r_i+d_ip_{i}} X_n^{r_n}\mid \sum_{i=1}^{n-1} d_i=r,~ d_i\ge 0\} $. Thus there is an isomorphism $\zeta_{\x}:\phi_{j,0}(R(\y))_{\x} \to R^j(\theta_j^{-1}(\y))_{\x}$ of $\mathbf{k}$-spaces, sending $ \prod_{i=1}^{n-1} X_i^{r_i+d_ip_i} X_n^{r_n+p_n-p_{j,n}}$  to  $\prod_{i=1}^{n-1} X_i^{r_i+d_ip_i} X_n^{r_n}$. 										
		If  $ p_{j,n}-y_n\le \lambda_n <p_{j,n} $, then we have $\theta_j(\vec r)=\theta_j(\x)+\y+(p_n-p_{j,n})\x_n$. By a similar analysis as above, there exists an isomorphism $\zeta_{\x}:\phi_{j,0}(R(\y))_{\x} \to R^j(\theta_j^{-1}(\y))_{\x}$ of $\mathbf{k}$-spaces, sending $\prod_{i=1}^{n-1} X_i^{r_i+d_ip_i} X_n^{r_n}\in R_{\theta_j(\vec r)}$ to $\prod_{i=1}^{n-1} X_i^{r_i+d_ip_i} X_n^{r_n}\in R^j_{\vec r}$. 			
		
		Therefore we obtain the  isomorphism $\zeta=\bigoplus_{\x\in\L_j}\zeta_{\x}:\phi_{j,0}(R(\y)) \to R^j(\theta_j^{-1}(\y))$ in $\mod^{\L_j} R^j$ as desired.
		
		\emph{Case $2$}: $p_{j,n}\le y_n < p_{n}$. Let $\vec t=\x+\theta_j^{-1}(\y-y_n\x_n)+\c_j$ and write $\vec t=\sum_{i=1}^{n}t_i\x_{j,i}+t\c_j$ in normal form. Then we have $\theta_j(\vec t)=\theta_j(\x)+\y-y_n\x_n+\c$. By a similar analysis as above, there exists an isomorphism 
		$\xi_{\x}:\phi_{j,0}(R(\y))_{\x} \to R^j(\theta_j^{-1}(\y-y_n\x_n)+\c_j)_{\x}$ of $\mathbf{k}$-spaces, sending $\prod_{i=1}^{n-1}X_i^{t_i+d_ip_i}X_n^{t_n+y_n-p_{j,n}}\in R_{\theta_j(\vec t)+(y_n-p_{j,n})\x_n}$ to  $\prod_{i=1}^{n-1}X_i^{t_i+d_ip_i}X_n^{t_n}\in R^j_{\vec t}$, where $t=\sum_{i=1}^{n-1} d_i$ with $d_i \ge 0$. Thus we obtain the  isomorphism $\xi=\bigoplus_{\x\in\L_j}\xi_{\x}:\phi_{j,0}(R(\y)) \to R^j(\theta_j^{-1}(\y-y_n\x_n)+\c_j)$. 
		
		Hence we have the assertion for $k = 0$. For any $k\in \Z$, we set $(\y_n+k)\x_n=a\x_n+b\c$ in normal form. Thus we have $\phi_{j,k}(R(\y))\simeq R^j(\theta_j^{-1}(\y+k\x_n)-k\x_{j,n})=R^j(\theta_j^{-1}(\y-(bp_n-k)\x_n)+(bp_{j,n}-k)\x_{j,n})$ if $0\le a < p_{j,n}$, and $\phi_{j,k}(R(\y))\simeq R^j(\theta_j^{-1}(\y+k\x_n-a\x_n)+\c_j-k\x_{j,n})=R^j	(\theta_j^{-1}(\y-y_n\x_n)+((b+1)p_{j,n}-k)\x_{j,n})$ otherwise. Thus we have the assertion.
	\end{proof}
	
	Recall from Lemma \ref{RI-adjoint triple}(d) that both the  functors $\phi_{j,k}$ and $\psi_{j,k}$ can restrict to the categories of Cohen-Macaulay modules. By Lemma \ref{preserve projective modules}, these restricted exact functors preserve projective modules. Therefore they induce triangle functors on the stable categories of Cohen-Macaulay modules, see \cite[Chapter I, Lemma 2.8]{Hap1}. By abuse of notation, the induced triangle functors are still denoted by $\phi_{j,k}$ and  $\psi_{j,k}$. In this case,  $(\phi_{j,k},\psi_{j,k},\phi_{j,k+1})$ is still an adjoint triple and $\psi_{j,k}$ is still fully faithful, see \cite[Lemma 2.3]{C}.

	\subsection{ $\phi_{j,k}$ and $\psi_{j,k}$ acts on Cohen-Macaulay modules}

	In this subsection, we provide an explicit formula for the action of the reduction/insertion functors to an important  class of Cohen-Macaulay modules. 
	Recall that $\vdelta=\sum_{i=1}^{n}(p_i-2)\x_i$  is called the \emph{dominant element} of $\L$. Note that $0 \leq \x \leq \vdelta$ holds if and only if we have $\x=\sum_{i=1}^n\lambda_i\x_i$ with $0 \le \lambda_i \le p_i-2$. Denote by $\s:=\sum_{i=1}^{n}\x_i$. For each element $\vell=\sum_{i=1}^{n}\ell_i\x_i$ in the interval $[\s,\s+\vdelta]$, let
	\begin{align} \label{important CM module}
		E^{\vell}:=R/(X_i^{\ell_i}\mid 1\le i\le n)\in \mod^{\L} R ~\text{ and } ~U^{\vell}:=\rho(E^{\vell}),
	\end{align}	
	where $\rho$ is the composition $\mod^{\L}R \subset\DDD^{\bo}(\mod^{\L}R)\to\DDD_{\rm sg}^{\L}(R)\xrightarrow{\sim} \underline{\CM}^{\L}R$. 
	The modules $U^{\vell}$ ($\s \le \vell \le \s+\vdelta$), up to degree shift and suspension,  play important roles in this paper, which allow us sufficient grip on the properties of $\underline{\CM}^{\L} R$.
	
	Similarly, we have the notation $\vdelta_j:=\sum_{i=1}^{n-1}(p_{i}-2)\x_{j,i}+p_{j,n}\x_{j,n}$, $\s_j:=\sum_{i=1}^{n}\x_{j,i}$, $E_j^{\vell}:=R^j/(X_i^{\ell_i}\mid 1\le i\le n)$ and $U_j^{\vell}:=\rho(E_j^{\vell})$ for  $\vell=\sum_{i=1}^{n}\ell_i\x_{j,i}\in[\s_j,\s_j+\vdelta_j]$, where $\rho$ is the composition $\mod^{\L_j}R^j \subset \DDD^{\bo}(\mod^{\L_j}R^j)\to\DDD_{\rm sg}^{\L_j}(R^j)\xrightarrow{\sim} \underline{\CM}^{\L_j}R^j$. Note that we have abused notation by calling different functors $\rho$. However, no confusion should arise as they all have different domains and codomains. 
	
	Recall that the functors $\phi_{j,k}$ and $\psi_{j,k}$ from $\mod^{\L} R$ to $\mod^{\L} R$ induce the triangle functors $\phi_{j,k}$ and $\psi_{j,k}$ from $\underline{\CM}^{\L} R$ to $\underline{\CM}^{\L} R$, respectively. Then we have natural isomorphism $\rho\phi_{j,k}\simeq\phi_{j,k}\rho$ and $\rho\psi_{j,k}\simeq\psi_{j,k}\rho$.  
	
	\begin{lemma}\label{L-action to U}  Under the above setting, the following assertions hold.
		\begin{itemize}
			\item[(a)]  Let $\vell=\sum_{i=1}^{n}\ell_i\x_{i}\in[\s,\s+\vdelta]$ and $\y=\sum_{i=1}^{n}y_i\x_{i}+y\c\in \L$. Then $\phi_{j,0}(U^{\vell}(\y))$ is given by one of the following four cases:

			\item[$\bullet$] If $y_n=0$ and $\ell_n>p_n-p_{j,n}$, then  $\phi_{j,0}(U^{\vell}(\y))=U_j^{\theta^{-1}_j(\vell-(p_n-p_{j,n})\x_n)}\theta^{-1}_j(\y);$
			\item[$\bullet$] If $1\le y_n <p_{j,n}$, then  $\phi_{j,0}(U^{\vell}(\y))=U_j^{\theta^{-1}_j(\vell-m\x_n)}\theta^{-1}_j(\y)$, where $m$ denotes the median value of the set $\{0, \ell_n-y_n,p_n-p_{j,n}\};$
			\item[$\bullet$] If $p_{j,n}\le y_n <p_n$ and $y_n-{p_{j,n}}<\ell_n< y_n$, then  $$\phi_{j,0}(U^{\vell}(\y))=U_j^{\theta^{-1}_j(\vell-(y_n-p_{j,n})\x_n)}\theta^{-1}_j(\y-y_n\x_n+\c);$$
			\item[$\bullet$] Otherwise $\phi_{j,0}(U^{\vell}(\y))=0$.\\
			In particular, $\phi_{j,0}(\rho(\mathbf{k})(\y))=\rho(\mathbf{k})(\theta_j^{-1}(\y-\x_n)+\x_{j,n})$ if $1\le y_n\le p_{j,n}$, and zero otherwise.
			\item[(b)]  Let $\vell=\sum_{i=1}^{n}\ell_i\x_{j,i}\in[\s_j,\s_j+\vdelta_j]$ and $\y=\sum_{i=1}^{n}y_i\x_{j,i}+y\c_j\in \L_j$.  Then
			\[\psi_{j,0}(U_j^{\vell}(\y))=\left\{\begin{array}{ll}
				U^{\theta_j(\vell)}(\theta_j(\y))	 &\text{if }~   y_n\ge \ell_n,\\
				U^{\theta_j(\vell)+(p_n-p_{j,n})\x_n}(\theta_j(\y)) &\text{otherwise}.
			\end{array}\right.\]
			In particular, $\psi_{j,0}(\rho(\mathbf{k})(\y))=\rho(\mathbf{k})(\theta_j(\y))$ if $ y_n\ge 1$, and $\psi_{j,0}(\rho(\mathbf{k})(\y))=U^{\s+(p_n-p_{j,n})\x_n}(\theta_j(\y))$ otherwise.
		\end{itemize}
	\end{lemma}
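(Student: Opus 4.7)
The plan is to reduce both parts to explicit computations at the graded-module level and then apply $\rho$, using the already noted identities $\rho\phi_{j,0}\simeq\phi_{j,0}\rho$ and $\rho\psi_{j,0}\simeq\psi_{j,0}\rho$ together with the compatibility of both functors with degree shifts through the (set-theoretic) map $\theta_j$. Concretely, I would identify $\phi_{j,0}(E^{\vell}(\y))$ and $\psi_{j,0}(E_j^{\vell}(\y))$ as $\L_j$- and $\L$-graded modules respectively, matching them with shifts of some $E_j^{\vell'}$ or $E^{\vell'}$ up to projective summands; the latter are killed by $\rho$.

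For part (a), I would start from the presentation
\[
E^{\vell}(\y)=\coker\Bigl(\bigoplus_{i=1}^{n}R(\y-\ell_i\x_i)\xrightarrow{(X_i^{\ell_i})}R(\y)\Bigr)
\]
and apply the exact functor $\phi_{j,0}$, using Lemma \ref{preserve projective modules}(a) to describe the image of each free summand as an explicit shift of $R^j$. The subtracted amount $m\x_n$ from $\vell$ in the second case is then the overlap between the fibre $[0,\ell_n-1]$ of $X_n$-exponents allowed in $E^{\vell}$ and the complementary range $[y_n,y_n+p_n-p_{j,n}-1]$ of $X_n$-degrees that $\phi_{j,0}$ erases from degree $\y$; this overlap equals the median of $\{0,\ell_n-y_n,p_n-p_{j,n}\}$. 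The first and third cases are the boundary situations in which the overlap is forced to be the extreme value $p_n-p_{j,n}$ or $y_n-p_{j,n}$, with the third case carrying an extra $+\c$ because the shift by $(p_n-p_{j,n})\x_n$ pushes $y_n\x_n$ past the canonical element. The ``Otherwise'' clause records the parameter range in which every monomial in the relevant degrees already has $X_n$-exponent $\ge\ell_n$, so $\phi_{j,0}(E^{\vell}(\y))$ is free (or zero) as an $R^j$-module and hence vanishes in the stable category.

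Part (b) is more uniform, since $\psi_{j,0}$ only duplicates graded pieces along the $\x_n$-direction rather than truncating: each $R^j$-homogeneous piece at degree $\x$ is copied $p_n-p_{j,n}+1$ times, into degrees $\theta_j(\x),\theta_j(\x)+\x_n,\dots,\theta_j(\x)+(p_n-p_{j,n})\x_n$. Applying $\psi_{j,0}$ to the presentation of $E_j^{\vell}(\y)$ and using Lemma \ref{preserve projective modules}(b) for the free summands yields a module that by direct inspection of monomial bases equals $E^{\theta_j(\vell)}(\theta_j(\y))$ when $y_n\ge\ell_n$ and $E^{\theta_j(\vell)+(p_n-p_{j,n})\x_n}(\theta_j(\y))$ otherwise, the extra shift by $(p_n-p_{j,n})\x_n$ recording the displacement through the duplicated gap. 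The two ``in particular'' statements then follow by specialising to $\vell=\s$ (so $E^{\s}=\mathbf{k}$ and $\rho(\mathbf{k})=U^{\s}$) and evaluating the general formulas; in part (a) the median collapses to $0$ when $1\le y_n<p_{j,n}$ and the third case contributes for $y_n=p_{j,n}$, all other values of $y_n$ falling into the ``Otherwise'' range.

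The main obstacle will be the bookkeeping with normal forms: because $\theta_j$ is not a group homomorphism, each shift by $\x_n$ or $\x_{j,n}$ may trigger an $\pm\c$- or $\pm\c_j$-correction in the target, and these corrections have to be tracked carefully when matching generators, relations and cokernels between the two sides. This is most delicate in the third case of part (a), where the interplay between the shift $(p_n-p_{j,n})\x_n$ built into $\phi_{j,0}$ and the $X_n$-truncation in $E^{\vell}$ forces the compensating $+\c$ inside $\theta_j^{-1}(\y-y_n\x_n+\c)$.
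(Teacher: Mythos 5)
Your overall strategy is the same as the paper's: compute $\phi_{j,0}(E^{\vell}(\y))$ and $\psi_{j,0}(E_j^{\vell}(\y))$ explicitly at the graded-module level and then apply $\rho$. The paper reads off the graded pieces of $E^{\vell}=\bigoplus_{0\le\x\le\vell-\s}\mathbf{k}(-\x)$ directly from the definition of $\phi_{j,0}$ (after reducing to $\y=y_n\x_n$), rather than pushing the Koszul-type presentation through the functor via Lemma \ref{preserve projective modules}, but this is a cosmetic difference, and your identification of $m$ with the overlap $\max(0,\min(\ell_n-y_n,\,p_n-p_{j,n}))$ is exactly the median computation in the paper's Case 2.

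There is, however, a genuine gap in your treatment of the ``Otherwise'' clause. You assert that in that parameter range $\phi_{j,0}(E^{\vell}(\y))$ is ``free (or zero)''. This fails in the sub-case $p_{j,n}\le y_n<p_n$ with $\ell_n\ge y_n$ (which is nonempty, e.g.\ $\ell_n=y_n=p_{j,n}$): there the effective truncation exponent in the $X_n$-direction becomes the full $p_{j,n}$, and since $X_n^{p_{j,n}}=-(X_1^{p_1}+\cdots+X_{n-1}^{p_{n-1}})$ already lies in the ideal $(X_1^{\ell_1},\dots,X_{n-1}^{\ell_{n-1}})$ of $R^j$, one obtains $\phi_{j,0}(E^{\vell}(\y))\simeq R^j/(X_1^{\ell_1},\dots,X_{n-1}^{\ell_{n-1}})$ up to shift, which for $n\ge 2$ is nonzero and not free. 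It is still killed by $\rho$, but the correct reason is that $(X_i^{\ell_i})_{1\le i<n}$ is an $R^j$-regular sequence, so the module has finite projective dimension; the paper supplies precisely this argument, and without it your case analysis is incomplete. A more minor slip: in part (b) it is not true that \emph{every} homogeneous piece of $N$ is copied $p_n-p_{j,n}+1$ times --- only the slices with $\lambda_n=0$ in $\L_j$-normal form are duplicated, while those with $\lambda_n\ge 1$ occur once, displaced by $(p_n-p_{j,n})\x_n$. Your final formulas for (b) are nevertheless correct, so this only affects the heuristic description, not the conclusion.
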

	\begin{proof} We only prove  (a), as (b) can be shown similarly. Without loss of generality, we can assume $\y=y_n\x_n$ with $0\le y_n <p_n$, see the paragraph before Lemma \ref{RI-adjoint triple}. Note that $E^{\vell}=\bigoplus_{0\le \x \le \vell-\s}\mathbf{k}(-\x)$ as an $\L$-graded $R$-module. 
		
		(a)  We consider the following three cases.
		
		\emph{Case $1$}: $y_n=0$. If $\ell_n>p_n-p_{j,n}$, then $\phi_{j,0}(E^{\vell})=E_j^{\theta^{-1}_j(\vell-(p_n-p_{j,n})\x_n)}$ from the construction of $\phi_{j,0}$ and so  $\phi_{j,0}(U^{\vell})=U_j^{\theta^{-1}_j(\vell-(p_n-p_{j,n})\x_n)}$. Otherwise we have $\phi_{j,0}(E^{\vell})=0$ and so  $\phi_{j,0}(U^{\vell})=0$. 
		
		\emph{Case $2$}: $1\le y_n <p_{j,n}$. From the construction of $\phi_{j,0}$, if $ \ell_n \le y_n$, then $\phi_{j,0}(E^{\vell}(\y))=E_j^{\theta^{-1}_j(\vell)}\theta^{-1}_j(\y)$; if $y_n<\ell_n\le y_n+p_n-p_{j,n}$, then $\phi_{j,0}(E^{\vell}(\y))=E_j^{\theta^{-1}_j(\vell-\ell_n\x_n+y_n\x_n)}\theta^{-1}_j(\y)$;
		otherwise  $\phi_{j,0}(E^{\vell}(\y))=E_j^{\theta^{-1}_j(\vell-(p_n-p_{j,n})\x_n)}\theta^{-1}_j(\y)$.
		 By		applying $\rho$ to both sides of all the above equalities and putting things together, the second statement of the assertion (a) follows.
		
		\emph{Case $3$}:  $p_{j,n}\le y_n <p_n$. From the construction of $\phi_{j,0}$, if $\ell_n\le y_n-p_{j,n}$, then $\phi_{j,0}(E^{\vell}(\y))=0$ and so $\phi_{j,0}(U^{\vell}(\y))=0$;
		if $y_n-{p_{j,n}}<\ell_n< y_n$, then  $\phi_{j,0}(E^{\vell}(\y))=E_j^{\theta^{-1}_j(\vell-(y_n-p_{j,n})\x_n)}\theta^{-1}_j(\y-y_n\x_n+\c)$ and thus the third statement of the assertion (a) holds; otherwise since $(X_i^{\ell_i})_{1\le i<n}$ forms an $R$-regular sequence, we have that $\phi_{j,0}(E^{\vell})=R/( X_1^{\ell_1},\dots, X_{n-1}^{\ell_{n-1}}, X_{n}^{p_{n}})=R/( X_1^{\ell_1},\dots, X_{n-1}^{\ell_{n-1}})$ has finite projective dimension.  By applying $\rho$ to both sides, we have $\phi_{j,0}(U^{\vell}(\y))=0$.	
				
		Therefore we obtain the first assertion of (a).	In particular if $\vdelta=0$, we have $\vell=\s$. In this case, $U^{\s}=\rho(\mathbf{k})$ and $\ell_n=1$. If $1\le y_n <p_{j,n}$, then by the second case of (a), 
		$\phi_{j,0}(\rho(\mathbf{k})(\y))=\rho(\mathbf{k})(\theta_j^{-1}(\y))$. If $y_n=p_{j,n}$,  then by the third case of (a), 
		$\phi_{j,0}(\rho(\mathbf{k})(\y))=\rho(\mathbf{k})(\theta^{-1}_j(\y-y_n\x_n+\c))$. Combining these two situations together, we have 
		$\phi_{j,0}(\rho(\mathbf{k})(\y))=\rho(\mathbf{k})(\theta_j^{-1}(\y-\x_n)+\x_{j,n})$ if $1\le y_n\le p_{j,n}$. Otherwise, one can check easily that $\phi_{j,0}(\rho(\mathbf{k})(\y))=0$ by the first assertion of (a).
	\end{proof}
	
	%Immediately we have the following observation.
	\begin{proposition}\label{U decomp.} 
		For any $\vell=\sum_{i=1}^{n} \ell_i\x_i $ with $\s\le \vell \le \s+\vdelta$,  
		\begin{align*}
			U^{\vell}= 
			\begin{cases}
				{\psi}_{1,p_{1,n}-1} (U_1^{\theta_1^{-1}(\vell)})	 &  \text{if } 1\le\ell_n < p_{1,n},\\
				{\psi}_{2,0} (U_2^{\theta_2^{-1}(\vell-(p_{n}-p_{2,n})\x_{n})}) & \text{if } p_{1,n} \le \ell_n < p_n.
			\end{cases}
		\end{align*} 
		Moreover, we have 
		$$ \bigoplus_{\s \le \vell \le \s+\vdelta} U^{\vell}= \bigoplus_{\s_1 \le \vell \le \s_1+\vdelta_1} {\psi}_{1,p_{1,n}-1}(U_1^{\vell})\oplus \bigoplus_{\s_2 \le \vell \le \s_2+\vdelta_2} {\psi}_{2,0} (U_2^{\vell}).$$
	\end{proposition}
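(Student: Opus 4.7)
My plan is to reduce both identities to a direct application of Lemma \ref{L-action to U}(b), unpacking the definition $\psi_{j,k}=(-k\x_n)\psi_{j,0}(k\x_{j,n})$ from \eqref{Reduction/insertion functor} and then carrying out a short case analysis on the value of $\ell_n$. Throughout I will use the identity $p_n-p_{2,n}=p_{1,n}-1$, which follows from the standing assumption $p_{1,n}+p_{2,n}=p_n+1$.

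First I handle the case $1\le\ell_n<p_{1,n}$. Here $\theta_1^{-1}(\vell)$ is well-defined with $\x_{1,n}$-coefficient equal to $\ell_n$, and by the definition of $\psi_{1,p_{1,n}-1}$ I need to compute
\[
\psi_{1,p_{1,n}-1}\bigl(U_1^{\theta_1^{-1}(\vell)}\bigr)=\psi_{1,0}\bigl(U_1^{\theta_1^{-1}(\vell)}((p_{1,n}-1)\x_{1,n})\bigr)(-(p_{1,n}-1)\x_n).
\]
The shift parameter $\y=(p_{1,n}-1)\x_{1,n}$ has $y_n=p_{1,n}-1\ge\ell_n$, so the first alternative of Lemma \ref{L-action to U}(b) applies and returns $U^{\vell}((p_{1,n}-1)\x_n)$; the outer degree shift by $-(p_{1,n}-1)\x_n$ then yields $U^{\vell}$. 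Next, for $p_{1,n}\le\ell_n<p_n$, I set $\vell':=\theta_2^{-1}(\vell-(p_n-p_{2,n})\x_n)$; the $\x_{2,n}$-coefficient of $\vell'$ is $\ell_n-(p_{1,n}-1)$, which lies in $[1,p_{2,n}-1]$, so $\vell'\in[\s_2,\s_2+\vdelta_2]$. Applying Lemma \ref{L-action to U}(b) to $\psi_{2,0}(U_2^{\vell'})=\psi_{2,0}(U_2^{\vell'}(0))$ with $y_n=0<\ell'_n$ places us in the second alternative, producing $U^{\theta_2(\vell')+(p_n-p_{2,n})\x_n}=U^{\vell}$ as required.

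For the ``Moreover'' statement, I will observe that as $\vell$ ranges over $\{\vell\in[\s,\s+\vdelta]\mid 1\le\ell_n<p_{1,n}\}$, the map $\theta_1^{-1}$ is a bijection onto $[\s_1,\s_1+\vdelta_1]$, and as $\vell$ ranges over $\{\vell\in[\s,\s+\vdelta]\mid p_{1,n}\le\ell_n<p_n\}$, the map $\vell\mapsto\theta_2^{-1}(\vell-(p_n-p_{2,n})\x_n)$ is a bijection onto $[\s_2,\s_2+\vdelta_2]$. Both bijections are straightforward consequences of the identity $p_n-p_{2,n}=p_{1,n}-1$ and of the range conventions for the normal forms; summing the first part of the proposition over these two disjoint index sets gives exactly the displayed decomposition.

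I do not expect any real obstacle here: the argument is essentially bookkeeping. The only point requiring a little care is checking that the correct alternative in Lemma \ref{L-action to U}(b) is triggered in each case, which hinges on the inequalities $p_{1,n}-1\ge\ell_n$ (Case~1) and $\ell_n-(p_{1,n}-1)\ge 1$ (Case~2), and on keeping track of the two outer degree shifts by $\pm(p_{1,n}-1)\x_n$ hidden in the definition of $\psi_{1,p_{1,n}-1}$.
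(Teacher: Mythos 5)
Your proof is correct and follows exactly the paper's route: the paper's entire proof is ``This is immediate from Lemma \ref{L-action to U}(b),'' and your write-up simply supplies the bookkeeping (unwinding the degree shifts in $\psi_{1,p_{1,n}-1}$, checking which alternative of the lemma applies via $p_{1,n}-1\ge\ell_n$ resp.\ $0<\ell_n-(p_{1,n}-1)$, and noting the two index bijections) that the paper leaves implicit.
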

	\begin{proof}
		This is immediate from  Lemma \ref{L-action to U}(b).
	\end{proof}

	\subsection{Proof of Theorem \ref{Thm A}}

	\begin{theorem}	\label{ladder diagram} The following diagram is an infinite ladder of period $p_n$.
	 		\begin{equation}\label{ladder diagram of BP  singularities}
	 		\xymatrix@C=1cm{
	 			\underline{\CM}^{\L_1}{R^1}
	 			\ar@/_3pc/[rrr]_{\vdots}|{{\psi}_{1,q}}
	 			\ar@/^3pc/[rrr]^{\vdots}|{{\psi}_{1,q-2}}
	 			\ar[rrr]|{{\psi}_{1,q-1}}
	 			&&& \underline{\CM}^{\L}{R}
	 			\ar@/_1.5pc/[lll]|{{\phi}_{1,q-1} }
	 			\ar@/^1.5pc/[lll]|{{\phi}_{1,q}}
	 			\ar[rrr]|{{\phi}_{2,0}}
	 			\ar@/_3pc/[rrr]_{\vdots}|{{\phi}_{2,1}}
	 			\ar@/^3pc/[rrr]^{\vdots}|{{\phi}_{2,-1}}
	 			&&& \underline{\CM}^{\L_2}{R^2}
	 			\ar@/_1.5pc/[lll]|{{\psi}_{2,-1}}
	 			\ar@/^1.5pc/[lll]|{{\psi}_{2,0}}
	 		}
	 	\end{equation}
		Here, the functors in the ladder are given by (\ref{Reduction/insertion functor}) and write $q:=p_{1,n}$.
		
	\end{theorem}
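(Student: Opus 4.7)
The plan is to first establish the central recollement $(\underline{\CM}^{\L_1}R^1, \underline{\CM}^{\L}R, \underline{\CM}^{\L_2}R^2)$ with middle embedding $\psi_{1,q-1}$ and localization $\phi_{2,0}$, and then extend it to the full ladder. Conditions (R1) and (R2) are immediate from Lemma \ref{RI-adjoint triple}: the adjoint triple $(\phi_{1,q-1},\psi_{1,q-1},\phi_{1,q})$ is part (b) with $j=1$, $k=q-1$; the adjoint triple $(\psi_{2,-1},\phi_{2,0},\psi_{2,0})$ is obtained by splicing $(\phi_{2,-1},\psi_{2,-1},\phi_{2,0})$ and $(\phi_{2,0},\psi_{2,0},\phi_{2,1})$; and full faithfulness of the three $\psi$'s is Lemma \ref{RI-adjoint triple}(a).

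The heart of the argument is axiom (R3): $\Im(\psi_{1,q-1}) = \ker(\phi_{2,0})$. I would first verify the vanishing $\phi_{2,0}\circ\psi_{1,q-1} = 0$ on the thick generators $\{\rho(\mathbf{k})(\y) \mid \y \in \L_1\}$ supplied by Proposition \ref{thick}(a); since both functors are triangle, vanishing on generators forces vanishing on the whole stable category. Concretely, unwinding $\psi_{1,q-1} = (-(q-1)\x_n)\psi_{1,0}((q-1)\x_{1,n})$ and applying Lemma \ref{L-action to U}(b) followed by Lemma \ref{L-action to U}(a) reduces the question to a case analysis on the $n$-th coordinate $\lambda_n \in [0,q)$ of $\y$ in $\L_1$-normal form. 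After tracking the normalization in $\L_1$ and then in $\L$, in each sub-case the composite lands either in a $\rho(\mathbf{k})$-module whose $n$-th coordinate falls outside the support $[1,p_{2,n}]$ of $\phi_{2,0}$ (``in particular'' statement of Lemma \ref{L-action to U}(a)), or in a module $U^{\vell}(\z)$ whose parameters fall into the ``otherwise'' branch of Lemma \ref{L-action to U}(a); in either case the composition vanishes, giving $\Im(\psi_{1,q-1}) \subseteq \ker(\phi_{2,0})$. The reverse inclusion follows from standard gluing: since $\phi_{2,0}$ admits fully faithful left and right adjoints $\psi_{2,-1}$ and $\psi_{2,0}$ it is a Verdier localization with kernel $\ker(\phi_{2,0})$, and one checks that $\psi_{1,q-1}: \underline{\CM}^{\L_1}R^1 \to \ker(\phi_{2,0})$ is an equivalence by using $\phi_{1,q-1}\psi_{1,q-1} \simeq \mathrm{id}$ (from Lemma \ref{RI-adjoint triple}(a)) together with essential surjectivity onto $\ker(\phi_{2,0})$, which one verifies on the thick generators using Lemma \ref{L-action to U} as in \cite[Section 4]{C}.

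With the central recollement in hand, the ladder extending upward and downward is essentially automatic: Lemma \ref{RI-adjoint triple}(b) supplies adjoints $\phi_{j,k}, \psi_{j,k}$ at every integer height, assembling into alternating chains on each side, and any three consecutive rows form a recollement by translating the central argument via the auto-equivalences $(k\x_n)$ and $(-k\x_{j,n})$; equivalently, this is J{\o}gensen's reflecting construction \cite{J}, which applies because $\underline{\CM}^{\L}R$ admits Serre duality by Theorem \ref{Auslander-Reiten-Serre duality}. For periodicity, a direct unwinding of (\ref{Reduction/insertion functor}) together with $p_n\x_n = \c$ and the intertwining $(\c)\psi_{j,0} \simeq \psi_{j,0}(\c_j)$ noted before Lemma \ref{RI-adjoint triple} yields $\psi_{j,k+p_n} \simeq (-\c)\circ\psi_{j,k}\circ(p_n\x_{j,n})$, with the analogous relation for $\phi_{j,k+p_n}$; since $(\c), (\c_j), (p_n\x_{j,n})$ are all auto-equivalences of the respective stable categories, the TTF triples at heights $k$ and $k+p_n$ coincide and the recollements are equivalent, with minimality of the period $p_n$ verifiable by tracking the image of a single simple generator. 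The main obstacle is expected to be the case analysis for $\phi_{2,0}\psi_{1,q-1} = 0$, which must be executed consistently across several normalization branches and the two possible shapes (simple vs.\ $U^{\vell}$) of the image of $\psi_{1,q-1}$; once this vanishing is pinned down, the rest of the theorem assembles cleanly from the adjunction formalism, the reflecting construction, and the shift calculations.
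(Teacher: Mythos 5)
Your overall architecture tracks the paper's: establish the central recollement using the adjoint triples and full faithfulness from Lemma \ref{RI-adjoint triple}, check an orthogonality by evaluating on the generators $\rho(\mathbf{k})(\y)$ of Proposition \ref{thick}(a) via Lemma \ref{L-action to U}, and obtain the remaining rungs and the period-$p_n$ statement from the degree-shift computation of the TTF triples (your formula $\psi_{j,k+p_n}\simeq(-\c)\psi_{j,k}(p_n\x_{j,n})$ is correct and, combined with the intertwining $(\c)\psi_{j,0}\simeq\psi_{j,0}(\c_j)$, gives $\Im\psi_{j,k+p_n}=\Im\psi_{j,k}$ as in the paper's Step 3). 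The vanishing you choose to verify, $\phi_{2,0}\psi_{1,q-1}=0$, is the mirror image of the one the paper checks, $\phi_{1,q}\psi_{2,0}=0$; either serves as the orthogonality input.

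The genuine gap is in your reverse inclusion $\ker(\phi_{2,0})\subseteq\Im(\psi_{1,q-1})$. You claim essential surjectivity of $\psi_{1,q-1}$ onto $\ker(\phi_{2,0})$ can be ``verified on the thick generators,'' but $\ker(\phi_{2,0})$ is not known to be generated by those $\rho(\mathbf{k})(\y)$ that happen to lie in it: a thick subcategory of $\thick\langle G\rangle$ need not coincide with the thick closure of its intersection with $G$, so there is no generating set of $\ker(\phi_{2,0})$ available to test against. What replaces this in the paper — and what your proposal omits entirely — is the generation statement $\thick\langle\Im\psi_{1,q-1}\cup\Im\psi_{2,0}\rangle=\underline{\CM}^{\L}R$, which together with the single vanishing $\phi_{1,q}\psi_{2,0}=0$ produces the recollement via the gluing criterion \cite[Lemma 2.5]{C}, bypassing any direct identification of the kernel. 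This generation step is the real work: Lemma \ref{L-action to U}(b) places $\rho(\mathbf{k})(\y)$ in one of the two images whenever $y_n\neq p_{2,n}$, but for $y_n=p_{2,n}$ the object lies in neither image, and one needs the exact sequence $0\to K\to E^{\s+(p_{2,n}-1)\x_n}(\y)\to\mathbf{k}(\y)\to 0$, whose middle term maps under $\rho$ into $\Im\psi_{1,q-1}$ and whose kernel $K$ is an iterated extension of simples $\mathbf{k}(\y-\x_n),\dots,\mathbf{k}(\y-(p_{2,n}-1)\x_n)$ already handled; the resulting triangle then places $\rho(\mathbf{k})(\y)$ in the thick closure. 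Without this device (or an equivalent one), axiom (R3) does not follow from the adjunction formalism and the case analysis you describe.
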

	\begin{proof} Recall that for any $j=1,2$ and $k\in \Z$, $(\phi_{j,k},\psi_{j,k},\phi_{j,k+1})$ forms an adjoint triple and $\psi_{j,k}$ is fully faithful.  We claim that ${\phi}_{1,q}{\psi}_{2,0}=0$ and $\thick \langle \Im {\psi}_{1,q-1} \cup \Im {\psi}_{2,0}\rangle =\underline{\CM}^{\L}{R}$. Consequently, we obtain a recollement by \cite[Lemma 2.5]{C}, and further the diagram (\ref{ladder diagram of BP  singularities}) is a ladder.
		
		\emph{Step $1$}: We show that ${\phi}_{1,q}{\psi}_{2,0}= 0$. 		
		By Proposition \ref{thick},  it suffices to show that ${\phi}_{1,q}{\psi}_{2,0}(\rho(\mathbf{k})(\z))=0$ holds for any $\z \in \L_2$. Write $\z=\sum_{i=1}^nz_i\x_{2,i}+z\c_2\in \L_2$ in normal form. Then by Lemma \ref{L-action to U}(b) we have ${\phi}_{1,q}{\psi}_{2,0}(\rho(\mathbf{k})(\z))=0$ if $z_n \ge 1$. If $z_n=0$, then  ${\psi}_{2,0}(\rho(\mathbf{k})(\z))=U^{\s+(p_n-p_{2,n})\x_n}(\theta_2(\z))$ holds
		 by Lemma \ref{L-action to U}(b), and further we have ${\phi}_{1,q}{\psi}_{2,0}(\rho(\mathbf{k})(\z))={\phi}_{1,0}(U^{\s+(q-1)\x_n}(\theta_2(\z)+q\x_n-\c))$.
		 This equals to zero by Lemma \ref{L-action to U}(a). Hence ${\phi}_{1,q}{\psi}_{2,0}= 0$.
		
		\emph{Step $2$}: We show that $\thick \langle \Im {\psi}_{1,q-1} \cup \Im {\psi}_{2,0}\rangle =\underline{\CM}^{\L}{R}$. For simplicity, we write $\mathcal{T}=\thick \langle \Im {\psi}_{1,q-1} \cup \Im {\psi}_{2,0}\rangle$. By Proposition \ref{thick}, it is enough to prove that  $\rho(\mathbf{k})(\y)\in \mathcal{T}$ for any $\y\in \L$. Let $\y=\sum_{i=1}^{n}y_i\x_i+y\c \in \L$ be in normal form. Note that by Lemma \ref{L-action to U}(b), $\rho(\mathbf{k})(\y)={\psi}_{2,0}(\rho(\mathbf{k})(\theta^{-1}_2(\y)))\in \Im {\psi}_{2,0}$ if $1\le y_n<p_{2,n}$. Similarly,  $\rho(\mathbf{k})(\y)$ belongs to $\Im {\psi}_{1,q-1}$ if $p_{2,n}< y_n < p_n$ or $y_n=0$. Hence we have that $\rho(\mathbf{k})(\y)\in \mathcal{T}$ for any $\y\in \L$ with $y_n\neq p_{2,n}$. It remains to deal with  the case  $y_n=p_{2,n}$. 
		Observe that there exists an exact sequence 
		$$ 0 \to K \to E^{\s+(p_{2,n}-1)\x_n}(\y) \to \mathbf{k}(\y) \to 0,$$ 
		in $\mod^{\L} R$, where $K=E^{\s+(p_{2,n}-2)\x_n}(\y-\x_n)$ has composition factors $\{\mathbf{k}(\y-\x_n),\dots,\mathbf{k}(\y-(p_{2,n}-1)\x_n) \}$. This yields a triangle 
		\begin{align}\label{tri}
			 \rho(K) \to U^{\s+(p_{2,n}-1)\x_n}(\y) \to \rho(\mathbf{k})(\y) \to \rho(K)[1]
		\end{align}		
		in $\underline{\CM}^{\L} R$. Then $\rho(K)$ belongs to $\thick \langle \rho(\mathbf{k})(\y-\x_n),\dots,\rho(\mathbf{k})(\y-(p_{2,n}-1)\x_n)  \rangle$. Since $U^{\s+(p_{2,n}-1)\x_n}(\y)={\psi}_{1,q-1}(\rho(\mathbf{k})(\theta_1^{-1}(\y-p_{2,n}\x_n)+\x_{1,n}))\in \mathcal{T}$ by Lemma \ref{L-action to U}, we have  $\rho(\mathbf{k})(\y)\in\mathcal{T}$ by (\ref{tri}). Thus $\thick \langle \Im {\psi}_{1,q-1} \cup \Im {\psi}_{2,0}\rangle =\underline{\CM}^{\L}{R}$.
		
		Therefore we have shown that the diagram (\ref{ladder diagram of BP  singularities}) is a ladder.
		
		\emph{Step $3$}: We show that the ladder is periodic of period $p_n$. Recall that there is a bijection between recollements of $\underline{\CM}^{\L} R$ and TTF triples in $\underline{\CM}^{\L} R$. Observe that for any $0\le m<p_n$ and $b\in \Z$,
		we have 
		\begin{align*}
			\ker \phi_{j,bp_n+m}&=\ker (b(p_{j,n}-p_n)\x_n) \phi_{j,m}=\ker \phi_{j,m},\\
			\Im \psi_{j,bp_n+m}&=\Im  \psi_{j,m}(b(p_n-p_{j,n})\x_n)=\Im \psi_{j,m}.
		\end{align*}
		Hence there is an unbound TTF tuple which is periodic of period $p_n$:
		$$(\cdots, \ker(\phi_{2,-1}), \Im(\psi_{2,-1}), \ker(\phi_{2,0}), \Im(\psi_{2,0}), \ker(\phi_{2,1}),\cdots ).$$
		It follows that the ladder (\ref{ladder diagram of BP  singularities}) is periodic of period $p_n$. This finishes the proof.
	\end{proof}

	\begin{remark}  In the case $n=3$, it is  known that the category ${\CM}^{\L} R$ and the category of vector bundles ${\vect}\, \X$ on a weighted projective line $\X$ equipped  with a distinguished exact structure, are equivalent as Frobenius categories. Moreover, this induces a triangle equivalence between the stable categories of them, that is, $\underline{\CM}^{\L} R\simeq \underline{\vect}\, \X$.  	This means that the ladder (\ref{ladder diagram of BP  singularities}) gives a different interpretation of \cite[Theorem 4.9]{Ruan} in terms of Cohen-Macaulay representations.
	\end{remark}

	\section{Eisenbud periodicity}	\label{sec: Eisenbud periodicity}
		In this section, we give an explicit formula of the action of the suspension functor on  the Cohen-Macaulay modules of form (\ref{important CM module}), which will be used frequently in this paper. This gives immediately a version of Eisenbud periodicity.
		
	\begin{proposition}\label{c=[2]} Let $\vell=\sum_{i=1}^{n}\ell_i\x_i$ with $\s\le \vell \le \s+\vdelta$. Then for any $1\le i \le n$, there exists an isomorphism
		\begin{align} \label{susp. action}
			U^{\vell}[1]=U^{\vell+(p_i-2\ell_i)\x_i}((p_i-\ell_i)\x_i).
		\end{align}	
	In particular, 	$U^{\vell}[2]=U^{\vell}(\c)$ and $U^{\vell}[n]=U^{n\c-\vell}(n\c-\vell)$.
	\end{proposition}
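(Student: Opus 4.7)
The plan is to realize the suspension on the modules $U^{\vell}$ via an explicit short exact sequence in $\mod^{\L}R$ whose middle term has finite projective dimension. Fix $i\in\{1,\dots,n\}$ and write $\vell^{*}:=\vell+(p_{i}-2\ell_{i})\x_{i}$, i.e., $\vell$ with its $i$-th coefficient flipped from $\ell_{i}$ to $p_{i}-\ell_{i}$; note that $\vell^{*}\in[\s,\s+\vdelta]$. Let $G:=R/(X_{j}^{\ell_{j}}\mid j\neq i)$. The central claim is the short exact sequence
\begin{equation*}
0\longrightarrow E^{\vell^{*}}(-\ell_{i}\x_{i})\xrightarrow{\,\cdot X_{i}^{\ell_{i}}\,}G\longrightarrow E^{\vell}\longrightarrow 0
\end{equation*}
in $\mod^{\L}R$, together with the vanishing $\rho(G)=0$ in $\underline{\CM}^{\L}R$.

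Both facts follow from a single reduction. Modulo the ideal $(X_{j}^{\ell_{j}})_{j\neq i}$, the defining equation $f=\sum_{j}X_{j}^{p_{j}}$ of $R$ reduces to $X_{i}^{p_{i}}$ (since $\ell_{j}\le p_{j}$ kills every $X_{j}^{p_{j}}$ with $j\neq i$). Because the polynomial quotient $\mathbf{k}[X_{1},\dots,X_{n}]/(X_{j}^{\ell_{j}}\mid j\neq i)$ is free of finite rank over $\mathbf{k}[X_{i}]$, the element $X_{i}^{p_{i}}$ acts there as a non-zero-divisor. Thus $(X_{j}^{\ell_{j}})_{j\neq i},\,f$ is a regular sequence in the polynomial ring, hence $(X_{j}^{\ell_{j}})_{j\neq i}$ is $R$-regular; the Koszul complex on it is a finite free resolution of $G$, so $\rho(G)=0$. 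The same description shows $G$ is free over $\mathbf{k}[X_{i}]/(X_{i}^{p_{i}})$, whence $\operatorname{ann}_{G}(X_{i}^{\ell_{i}})=X_{i}^{p_{i}-\ell_{i}}G$. Consequently the kernel of $R\xrightarrow{\cdot X_{i}^{\ell_{i}}}G$ is $(X_{j}^{\ell_{j}}\mid j\neq i)+(X_{i}^{p_{i}-\ell_{i}})$, which identifies the image $X_{i}^{\ell_{i}}G$ with $E^{\vell^{*}}(-\ell_{i}\x_{i})$ (the generator $X_{i}^{\ell_{i}}$ sits in degree $\ell_{i}\x_{i}$), producing the displayed sequence.

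Applying $\rho$ to that sequence and using $\rho(G)=0$ yields the identity $U^{\vell}[-1]\simeq U^{\vell^{*}}(-\ell_{i}\x_{i})$. Iterating once more with $\vell^{*}$ in place of $\vell$ (noting $(\vell^{*})^{*}=\vell$) gives $U^{\vell}[-2]\simeq U^{\vell}(-p_{i}\x_{i})=U^{\vell}(-\c)$, i.e., Eisenbud's periodicity $[2]=(\c)$ on these modules. Applying $[2]$ to both sides of the $[-1]$-identity and using $\c=p_{i}\x_{i}$ then gives
\begin{equation*}
U^{\vell}[1]\simeq U^{\vell^{*}}(\c-\ell_{i}\x_{i})=U^{\vell^{*}}((p_{i}-\ell_{i})\x_{i}),
\end{equation*}
which is (\ref{susp. action}). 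The ``in particular'' claims then follow immediately: $U^{\vell}[2]=U^{\vell}(\c)$ is already established, and $U^{\vell}[n]=U^{n\c-\vell}(n\c-\vell)$ is obtained by applying (\ref{susp. action}) successively for $i=1,\dots,n$, using $p_{i}\x_{i}=\c$ to simplify the cumulative shift $\sum_{i}(p_{i}-\ell_{i})\x_{i}=n\c-\vell$.

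The main obstacle is the $R$-regularity of $(X_{j}^{\ell_{j}})_{j\neq i}$: one cannot directly invoke that $X_{1},\dots,X_{n-1}$ is a system of parameters on $R$, because the specific powers $\ell_{j}$ control the annihilator identity $\operatorname{ann}_{G}(X_{i}^{\ell_{i}})=(X_{i}^{p_{i}-\ell_{i}})$ that pins down $E^{\vell^{*}}$ as the appropriate syzygy. The mod-$f$ reduction to the polynomial ring, together with the freeness of the resulting quotient over $\mathbf{k}[X_{i}]$, handles both the Koszul resolvability of $G$ and the annihilator computation in one stroke.
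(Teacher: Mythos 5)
Your proof is correct, but it takes a genuinely different route from the paper's. The paper proves (\ref{susp. action}) by induction on the weight tuple $(p_1,\dots,p_n)$: the base case (all $p_i=2$) is handled by the short exact sequence $0\to\mathbf{k}\to\mathbf{k}[X_i]/(X_i^2)(\x_i)\to\mathbf{k}(\x_i)\to 0$ with middle term of finite projective dimension, and the induction step transports the formula through the insertion functors $\psi_{j,k}$ of the ladder, using Proposition \ref{U decomp.} and Lemma \ref{L-action to U}(b). What you do instead is generalize the paper's base-case sequence directly to arbitrary weights: the sequence $0\to E^{\vell+(p_i-2\ell_i)\x_i}(-\ell_i\x_i)\xrightarrow{\cdot X_i^{\ell_i}}R/(X_j^{\ell_j}\mid j\neq i)\to E^{\vell}\to 0$, with the middle term killed by $\rho$ because $(X_j^{\ell_j})_{j\neq i}$ is an $R$-regular sequence (your mod-$f$ reduction to the polynomial ring, plus freeness over $\mathbf{k}[X_i]$, correctly establishes both the regularity and the annihilator identity $\operatorname{ann}_G(X_i^{\ell_i})=X_i^{p_i-\ell_i}G$ that pins down the kernel; the implicit permutation of the regular sequence is standard for homogeneous elements of positive degree). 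This yields $U^{\vell}[-1]\simeq U^{\vell^*}(-\ell_i\x_i)$ in one stroke, and your bootstrap through $[2]=(\c)$ to convert this into the stated $[1]$-formula is clean. Your argument is shorter, self-contained, and avoids the ladder machinery entirely; the paper's version, by contrast, is designed to showcase that machinery and reuses computations (Lemma \ref{L-action to U}) already in place. Both are valid; yours is arguably the more elementary and direct proof of this particular statement.
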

	\begin{proof}Let $\mathcal{S}=\{ (q_1,\dots,q_n)\in \Z^{n} \mid 2\le q_i\le p_i \}$. We  equip $\mathcal{S}$ with the structure of a poset: $(a_1,\dots,a_n)\le (b_1,\dots,b_n)$ if and only if $a_i\le b_i$ for any $1\le i \le n$.	We show the assertion by  induction on $\mathcal{S}$ with respect to the partial order.
		  		
		Assume $p_i=2$ for all $1\le i \le n$. The assertion reduces to the claim that we have to show $\rho(\mathbf{k})[1]=\rho(\mathbf{k})(\x_i)$ for any $1\le i \le n$. 
		Note that there is an exact sequence
		$$0 \to \mathbf{k} \to \mathbf{k}[X_i]/(X_i^2)(\x_i) \to \mathbf{k}(\x_i) \to 0$$
		in $\mod^{\L} R$, and the middle term has finite projective dimension since $(X_k)_{1\le k\le n, \neq i}$ forms an $R$-regular sequence. Thus  $\rho(\mathbf{k})[1]=\rho(\mathbf{k})(\x_i)$.
		
		 Next we consider the induction step. Interchange the roles of two weights that changes neither $R$ nor $\L$,  and thus without loss of generality we can assume $p_n\ge 3$. Suppose that the claim holds for all weight type $(q_1,\dots,q_n)<(p_1,\dots,p_n)$.
		 Let $R^{j}$ be an $\L_j$-graded BP singularity  with weights $p_1,\ldots,p_{n-1},p_{j,n}$ for $j=1,2$, where $p_{1,n}=p_n-1$ and $p_{2,n}=2$.  
		Recall from Proposition \ref{U decomp.} that we have
		\begin{align*}
			U^{\vell}= 
			\begin{cases}
				{\psi}_{1,p_n-2} (U_1^{\theta_1^{-1}(\vell)})	 &  \text{if } 1\le\ell_n \le p_n-2,\\
				{\psi}_{2,0} (U_2^{\theta_2^{-1}(\vell-(p_n-2)\x_{n})}) & \text{if } \ell_n = p_n-1.
			\end{cases}
		\end{align*} 
		By the induction hypothesis, the assertion is trivial for $0\le i <n$. 	
		Next we  show the assertion for $i=n$ and have to deal with the following two cases. 
		
		\emph{Case $1$}: $1\le\ell_n \le p_n-2$.  By the induction hypothesis, $$U_1^{\theta_1^{-1}(\vell)}[1]=U_1^{\theta_1^{-1}(\vell)+(p_n-1-2\ell_n)\x_{1,n}}((p_n-1-\ell_n)\x_{1,n}).$$
		Applying ${\psi}_{1,p_n-2}$ to both sides, by Lemma \ref{L-action to U}(b), we have
		\begin{align*}
		U^{\vell}[1]	&={\psi}_{1,p_n-2} (U_1^{\theta_1^{-1}(\vell)+(p_n-1-2\ell_n)\x_{1,n}}((p_n-1-\ell_n)\x_{1,n}))\\
			&=(-(p_n-2)\x_n){\psi}_{1,0} (U_1^{\theta_1^{-1}(\vell)+(p_n-1-2\ell_n)\x_{1,n}}((p_n-2-\ell_n)\x_{1,n}+\c_1))\\
			&=U^{\vell+(p_n-2\ell_n)\x_n}((p_n-\ell_n)\x_n).
		\end{align*}
		
		\emph{Case $2$}: $\ell_n = p_n-1$. By the induction hypothesis,
		$$U_2^{\theta_2^{-1}(\vell-(p_n-2)\x_n)}[1]=U_2^{\theta_2^{-1}(\vell-(p_n-2)\x_n)}(\x_{2,n}).$$
		Applying ${\psi}_{2,0}$ to both sides, we have $U^{\vell}[1]=U^{\vell-(p_n-2)\x_n}(\x_n)$ by Lemma \ref{L-action to U}(b), which is equal to $U^{\vell+(p_n-2\ell_n)\x_n}((p_n-\ell_n)\x_n)$. This finishes the induction step and thus the first assertion follows. 
		
		Applying (\ref{susp. action}) repeatedly twice, we have	
		\begin{align*}
			U^{\vell}[2]	&=U^{\y}[1]((p_i-\ell_i)\x_i)\\
			&=U^{\y+(p_i-2y_i)\x_i}((p_i-\ell_i+p_i-y_i)\x_i)\\
			&=U^{\vell}(\c).
		\end{align*}
		Here $\y:=\vell+(p_i-2\ell_i)\x_i$ with $1\le i\le n$, and thus $y_i:=l_i(\y)=p_i-\ell_i$. 
		
		Applying (\ref{susp. action}) in the order of the index set $\{1,\dots,n\}$, we have
		\begin{align*}
			U^{\vell}[n]	&=U^{\vell+(p_1-2\ell_1)\x_1}[n-1]((p_1-\ell_1)\x_1)\\
			&=U^{\vell+(p_1-2\ell_1)\x_1+(p_2-2\ell_2)\x_2}[n-2]((p_1-\ell_1)\x_1+(p_2-\ell_2)\x_2)\\
			&=\dots \\
			&=U^{n\c-\vell}(n\c-\vell).
		\end{align*}
		Therefore we have the assertion.
	\end{proof}
	
		This gives immediately a version of Eisenbud periodicity, see \cite{Y}.
	
	\begin{corollary}[Eisenbud periodicity] There exists an isomorphism $[2]\simeq (\c)$ of functors $\underline{\CM}^{\L} R \to \underline{\CM}^{\L} R$.
		%The two-fold suspension $[2]$ of $\underline{\CM}^{\L} R$ is given by degree shift by $\c$.		
	\end{corollary}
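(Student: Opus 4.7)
The plan is to upgrade the object-level identity in Proposition \ref{c=[2]} into a natural isomorphism of triangle auto-equivalences, using the generation result Proposition \ref{thick}(a).

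First I would specialize Proposition \ref{c=[2]} to $\vell = \s$. Since $E^{\s} = R/(X_1,\ldots,X_n) = \mathbf{k}$, we have $U^{\s} = \rho(\mathbf{k})$, so the identity $U^{\vell}[2] \simeq U^{\vell}(\c)$ reads $\rho(\mathbf{k})[2] \simeq \rho(\mathbf{k})(\c)$. The degree-shift $(\y)$ is an auto-equivalence of $\underline{\CM}^{\L} R$ commuting with both $[2]$ and $(\c)$, so applying it gives $\rho(\mathbf{k})(\y)[2] \simeq \rho(\mathbf{k})(\y)(\c)$ for every $\y \in \L$. Thus $[2]$ and $(\c)$ agree at the object level on the generating set supplied by Proposition \ref{thick}(a).

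Next I would produce a canonical natural transformation $\eta_M : M(\c) \to M[2]$ for $M \in \underline{\CM}^{\L} R$. The natural source is Eisenbud's matrix factorization description: the defining polynomial $f = \sum_i X_i^{p_i}$ has $\L$-degree $\c$, and every graded Cohen-Macaulay module over the hypersurface $R = \mathbf{k}[X_1,\ldots,X_n]/(f)$ admits a minimal $2$-periodic free resolution coming from a matrix factorization $(\phi,\psi)$ of $f$. The null-homotopy of multiplication by $f$ on this resolution yields a canonical map $M(\c) \to \Omega^{-2} M = M[2]$ in $\underline{\CM}^{\L} R$, natural in $M$. Then the full subcategory $\{X \mid \eta_X \text{ is an isomorphism}\}$ is closed under cones, direct summands, suspensions and degree shifts, hence is thick. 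By the first step it contains every $\rho(\mathbf{k})(\y)$, so by Proposition \ref{thick}(a) it coincides with $\underline{\CM}^{\L} R$, giving the natural isomorphism $(\c) \simeq [2]$.

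The main obstacle is the second step: producing $\eta$ and verifying that its components on the generators realize the isomorphisms given by Proposition \ref{c=[2]}. A more self-contained route, avoiding the matrix factorization apparatus, would be to extract $\eta$ directly from the connecting morphisms of the Koszul-type short exact sequences used in the inductive proof of Proposition \ref{c=[2]} (starting from $0 \to \mathbf{k} \to \mathbf{k}[X_i]/(X_i^2)(\x_i) \to \mathbf{k}(\x_i) \to 0$ in the base case and iterating along the weight induction); the required naturality then follows from the functoriality of these sequences in $M$.
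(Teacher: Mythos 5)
Your proposal is correct, and its first step is exactly the paper's proof: specialize Proposition \ref{c=[2]} to $\vell=\s$ to get $\rho(\mathbf{k})[2]\simeq\rho(\mathbf{k})(\c)$, then invoke the generation statement $\underline{\CM}^{\L}R=\thick\langle\rho(\mathbf{k})(\y)\mid\y\in\L\rangle$ of Proposition \ref{thick}(a). Where you go beyond the paper is in the second step, and rightly so: two triangle functors that agree objectwise on a generating set need not be isomorphic as functors, so the paper's ``follows immediately'' silently presupposes a natural transformation $(\c)\to[2]$ whose components can then be tested on generators (the locus where a natural transformation of triangle functors is invertible being a thick subcategory). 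You supply this missing ingredient via the classical Eisenbud mechanism --- the null-homotopy for multiplication by $f=\sum_i X_i^{p_i}$ (of degree $\c$) on a free resolution, equivalently the $2$-periodicity of matrix factorizations --- which is precisely the content of the reference \cite{Y} the paper gestures at. So your argument is not a different route but a completion of the paper's; the only caution is that your fallback suggestion of assembling $\eta$ from the connecting maps of the inductive Koszul sequences would require checking compatibility across the induction on weights, whereas the matrix-factorization construction gives naturality for free.
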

	\begin{proof} By Proposition \ref{c=[2]}, we have $\rho(\mathbf{k})[2]=\rho(\mathbf{k})(\c)$. 
		Then	 the assertion follows immediately from Proposition \ref{thick}.
	\end{proof}
	
	As an immediate consequence of Proposition \ref{c=[2]},  we have the following two special cases, which are frequently used later. 
	
	\begin{corollary} \label{x_i=[1]} Let $\vell=\sum_{i=1}^{n} \ell_i\x_i$ with $\s\le \vell \le \s+\vdelta$.
		\begin{itemize}
			\item[(a)]  If one of weight $p_i=2$, then $U^{\vell}[1]=U^{\vell}(\x_i)$.	
			\item[(b)]	 For any $1\le i \le n$,  $U^{\vell+(p_i-\ell_i-1)\x_i}[1]=U^{\vell-(\ell_i-1)\x_i}(\x_i)$. 
			In particular,	 $U^{\s+(p_i-2)\x_i}[1]=\rho(\mathbf{k})(\x_i)$ and $U^{\s+\vdelta}=\rho(\mathbf{k})(\s)[-n]$.
%			\item[(c)] .
		\end{itemize}
	\end{corollary}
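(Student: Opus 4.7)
The plan is to derive both parts of the corollary as direct specializations of the key identity
\[
U^{\vell}[1] = U^{\vell + (p_i - 2\ell_i)\x_i}\bigl((p_i - \ell_i)\x_i\bigr)
\]
supplied by Proposition \ref{c=[2]} for each index $1 \le i \le n$. Throughout I will exploit the fact that the range condition $\s \le \vell \le \s + \vdelta$ forces $1 \le \ell_j \le p_j - 1$ for every $j$, which controls all the boundary cases that come up.

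For part (a), the assumption $p_i = 2$ combined with the above range constraint pins down $\ell_i = 1$. Substituting into the key identity makes the exponent shift $(p_i - 2\ell_i)\x_i$ vanish and reduces the degree shift to $\x_i$, yielding $U^{\vell}[1] = U^{\vell}(\x_i)$ immediately.

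For part (b), I will introduce the auxiliary weight $\vell' := \vell + (p_i - \ell_i - 1)\x_i$ and verify that $\ell'_j = \ell_j$ for $j \neq i$ while $\ell'_i = p_i - 1$, so in particular $\vell'$ still lies in $[\s, \s+\vdelta]$ and the key identity applies to it. Plugging $\ell'_i = p_i - 1$ into the formula produces the exponent shift $-(p_i - 2)\x_i$ together with a degree shift of $\x_i$; simplifying $\vell' - (p_i - 2)\x_i = \vell - (\ell_i - 1)\x_i$ delivers precisely the claimed identity. For the two ``in particular'' consequences: setting $\vell = \s$ in (b) makes $\ell_i = 1$ and yields $U^{\s + (p_i - 2)\x_i}[1] = U^{\s}(\x_i) = \rho(\mathbf{k})(\x_i)$, where the last step uses $E^{\s} = \mathbf{k}$. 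The final identity $U^{\s + \vdelta} = \rho(\mathbf{k})(\s)[-n]$ follows instead from the other formula $U^{\vell}[n] = U^{n\c - \vell}(n\c - \vell)$ of Proposition \ref{c=[2]}, applied at $\vell = \s + \vdelta$, together with the elementary computation $n\c - \s - \vdelta = \sum_i \bigl((p_i - 1) - (p_i - 2)\bigr)\x_i = \s$, followed by translating the shift $[n]$ to the other side.

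The whole argument is essentially bookkeeping around Proposition \ref{c=[2]}, so there is no substantive obstacle; the only real care required is checking that the auxiliary weight $\vell'$ stays inside the admissible interval $[\s, \s+\vdelta]$ and tracking the normal forms accurately in each step.
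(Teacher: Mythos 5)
Your proof is correct and follows exactly the route the paper intends: the paper states the corollary as an immediate consequence of Proposition \ref{c=[2]} without further argument, and your specializations (taking $\ell_i=1$ when $p_i=2$ for (a), applying the key identity at $\vell'$ with $\ell'_i=p_i-1$ for (b), and using the $[n]$-formula with $n\c-\s-\vdelta=\s$ for the last claim) are the intended bookkeeping. All range checks and normal-form computations are accurate.
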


	%\section{Tilting objects in the stable category of Cohen-Macaulay modules}
	\section{Tilting objects via tensor products}\label{sec: Tilting objects via tensor products}
	The aim of this section is to study tilting theory in the triangulated category $\underline{\CM}^{\L} R$. We construct a class of tilting objects in $\underline{\CM}^{\L} R$ via our ladder (\ref{ladder diagram}), called extended tilting $n$-cuboids, including two important tilting objects that have been widely studied.
	We show that endomorphism algebra of such a tilting object is $n$-fold tensor products of homogeneous acyclic Nakayama algebras.

	\subsection{Tensor products of homogeneous acyclic Nakayama algebras}
	Recall that a finite dimensional algebra is called \emph{Nakayama algebra} if its indecomposable projective or injective modules are uniserial, that is, it has a unique composition series. A natural class of such algebras are \emph{homogeneous acyclic Nakayama algebras}, which have the forms $\mathbf{k}\vec{\AA}_{n}(m):=\mathbf{k}\vec{\AA}_n/{\rm rad^m} \, \mathbf{k}\vec{\AA}_n$ for  $n,m \ge 1$. Here $\mathbf{k}\vec{\AA}_n$ denotes the path algebra of the equioriented quiver of type $\AA_n$ and ${\rm rad}\, \mathbf{k}\vec{\AA}_n$ denotes the ideal generated by all arrows of $\mathbf{k}\vec{\AA}_n$.
	
		 We introduce a class of finite dimensional algebras by modifying the definition of CM-canonical algebras in \cite{HIMO}, which gives a new quiver description of the tensor product of homogeneous acyclic Nakayama algebras. Here $\vdelta =\sum_{i=1}^{n}(p_i-2)\x_i$.
	
	\begin{definition}  Let $\mathbf{q}=(q_1,\dots,q_n)$ be an $n$-tuple of integers with $1\le q_i\le p_i-1$ and
		$S:=R/(X_i^{q_i} \mid 1\le i \le n)$. Denote by $I=[0,\vdelta]$. We define a matrix algebra
		$$\Lambda(\mathbf{q}):=(S_{\x-\y}){}_{\x,\y\in I},$$
		where  the multiplication of $\Lambda(\mathbf{q})$ is given by
		\[(a_{\x,\y})_{\x,\y\in I}\cdot(a'_{\x,\y})_{\x,\y\in I}:=
		(\sum_{\z\in I}a_{\x,\z}\cdot a'_{\z,\y})_{\x,\y\in I}.\]		
	\end{definition}

	\begin{lemma} \label{iso. algs} In the setup above,  there is an isomorphism of k-algebras
		$$\Lambda(\mathbf{q}) \simeq \bigotimes_{1\le i \le n} \mathbf{k}\vec{\AA}_{p_i-1}(q_i).$$ 
	\end{lemma}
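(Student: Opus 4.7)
The plan is to construct an explicit $\mathbf{k}$-algebra isomorphism $\Phi:\Lambda(\mathbf{q})\to B:=\bigotimes_{i=1}^{n}\mathbf{k}\vec{\AA}_{p_i-1}(q_i)$ by matching up canonical bases. The first step is to compute $S_{\x-\y}$ for $\x,\y\in I$. Writing $\x=\sum\lambda_i\x_i$ and $\y=\sum\mu_i\x_i$ with $0\le\lambda_i,\mu_i\le p_i-2$, the difference $\x-\y=\sum(\lambda_i-\mu_i)\x_i$ has coefficients strictly between $-(p_i-1)$ and $p_i-1$. Using the normal form description of $\L$ and the explicit basis of $R_{\vec{z}}$ recalled in Section~\ref{sec: Recollements and Brieskorn-Pham singularities}, one concludes that $R_{\x-\y}$ is one-dimensional with basis $\prod_{i=1}^{n}X_i^{\lambda_i-\mu_i}$ when $\lambda_i\ge\mu_i$ for all $i$, and is zero otherwise (since then the normal form of $\x-\y$ has strictly negative $\c$-coefficient). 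Passing to the quotient $S=R/(X_i^{q_i}\mid 1\le i\le n)$, the space $S_{\x-\y}$ is therefore one-dimensional exactly when $0\le\lambda_i-\mu_i<q_i$ for all $i$, and zero otherwise.

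Next, I would describe $B$ combinatorially. Labelling the vertices of $\vec{\AA}_{p_i-1}$ by $\{0,1,\dots,p_i-2\}$, the primitive idempotents of $B$ are tensors $e_\x=e_{\lambda_1}\otimes\cdots\otimes e_{\lambda_n}$ indexed by tuples $\x=(\lambda_1,\dots,\lambda_n)$ with $0\le\lambda_i\le p_i-2$, which are exactly the elements of $I$. The Hom space $e_\x B e_\y$ has a canonical basis consisting of the tensor product of the unique paths of length $\lambda_i-\mu_i$ in each factor, and is nonzero (necessarily one-dimensional) precisely when $0\le\lambda_i-\mu_i<q_i$ for all $i$. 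Thus the idempotent decompositions of $\Lambda(\mathbf{q})$ and $B$ are indexed by the same set $I$, and the corresponding Hom pieces have matching support and dimension.

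I would then define $\Phi$ by sending the idempotent of $\Lambda(\mathbf{q})$ at $\x\in I$ to $e_\x\in B$, and sending the monomial basis vector $\prod_i X_i^{\lambda_i-\mu_i}$ of $S_{\x-\y}\subset e_\x\Lambda(\mathbf{q})e_\y$ to the corresponding tensor of path generators in $e_\x B e_\y$. By the previous two steps, $\Phi$ is a $\mathbf{k}$-linear bijection. Multiplicativity is then a one-line check: the product of $\prod_i X_i^{\lambda_i-\mu_i}\in S_{\x-\y}$ and $\prod_i X_i^{\mu_i-\nu_i}\in S_{\y-\z}$, computed in $S$, equals $\prod_i X_i^{\lambda_i-\nu_i}$ if $\lambda_i-\nu_i<q_i$ for all $i$ and is zero otherwise; on the tensor-product side, the corresponding product is the componentwise concatenation of paths, which vanishes under the same condition.

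The only real subtlety is the dimension count in Step~1: one must confirm that no extra basis vectors arise from the defining relation $X_1^{p_1}+\cdots+X_n^{p_n}=0$, i.e.\ that the normal form of $\x-\y$ carries $\lambda=0$ whenever $\x-\y\ge 0$. This is immediate from the bound $\lambda_i-\mu_i\le p_i-2<p_i$, which prevents any wrap-around through $\c$; once this is in hand, both the idempotent decomposition and the multiplication translate coordinate-by-coordinate to the tensor product, and the isomorphism is essentially formal.
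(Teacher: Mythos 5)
Your proof is correct and follows essentially the same route as the paper: both arguments come down to the computation that $S_{\x-\y}$ is one-dimensional exactly when $0\le\lambda_i-\mu_i<q_i$ for all $i$ (and zero otherwise), after which the identification with the tensor product is a coordinatewise bookkeeping of idempotents, basis monomials, and products. The paper phrases this as a Kronecker-type decomposition of the matrix algebra into band matrices $\mathbf{k}I_{p_i-1}+X_iJ_{p_i-1}+\dots+X_i^{q_i-1}J_{p_i-1}^{q_i-1}$ under a lexicographic ordering of $[0,\vdelta]$ and leaves the dimension count as ``easy to check,'' whereas you spell that count out explicitly; the content is the same.
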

	
	\begin{proof}  Let $0\le \x,\y \le \vdelta$ with $\x=\sum_{i=1}^{n}\lambda_i\x_i$ and $\y=\sum_{i=1}^{n}y_i\x_i$ in the normal forms.
		We equip $[0,\vdelta]$ with the structure of a linear order set: 
		$$\x\preceq \y  \ \ \text{if and only if} \ \ (\lambda_1,\dots,\lambda_n)\le_{\rm lex} (y_1,\dots,y_n),$$ where $\le_{\rm lex}$ denotes the usual  \emph{lexicographic order}, that is, either $\lambda_i=y_i$ for all $1\le i \le n$, or that there exists $1\le k \le n$ such that $\lambda_j=y_j$ for all $j< k$ and  $\lambda_k<y_k$.  
		Arranging $[0,\vdelta]$ in this order, it is easy to check that
		\begin{align*}
			\Lambda(\mathbf{q}) &\simeq \bigotimes_{1\le i \le n}(\mathbf{k}I_{p_i-1}+X_iJ_{p_i-1}+\dots+X_i^{q_i-1}J_{p_i-1}^{q_i-1})\\
			& \simeq \bigotimes_{1\le i \le n} \mathbf{k}\vec{\AA}_{p_i-1}(q_i),
		\end{align*}
		where $I_m$  denotes the identity  matrix of size $m$ and $J_m:=\left(\begin{smallmatrix} 0 & I_{m-1} \\ 0 & 0  \end{smallmatrix}\right)$.		
	\end{proof}	 
	
	Note that for the case $\mathbf{q}=(p_1-1,\dots,p_n-1)$, the algebra $\Lambda(\mathbf{q})\simeq \bigotimes_{1\le i \le n} \mathbf{k}\vec{\AA}_{p_i-1}$,
	which reduces to the CM-canonical algebra of the $\L$-graded BP singularity $R$.
	
	\begin{proposition} \label{shape theorem}
		The $\mathbf{k}$-algebra $\Lambda(\mathbf{q})$ is presented
		by the quiver $Q$ defined by vertices $Q_0:=[0, \vdelta]$ and arrows
		$$Q_1:=\{\x\xrightarrow{x_i} \x+\x_i \mid 1\le i \le n \ \text{and} \ \x, \x+\x_i\in [0, \vdelta]  \}$$
		with the following relations:
		\begin{itemize}
			\item  $x_ix_j=x_jx_i:\x \to \x+\x_i+\x_j$, where $1\le i,j \le n$ and $0\le \x \le \x+\x_i+\x_j \le \vdelta$,
			\item  $x_i^{q_i}:\x \to \x+q_i\x_i$, where $1\le i \le n$ and $0\le \x \le \x+q_i\x_i \le \vdelta$.
		\end{itemize}
	\end{proposition}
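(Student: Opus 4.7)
The plan is to reduce the proposition to Lemma \ref{iso. algs}, which identifies $\Lambda(\mathbf{q})$ with $\bigotimes_{i=1}^n \mathbf{k}\vec{\AA}_{p_i-1}(q_i)$, and then apply the standard presentation of a tensor product of path-algebras-with-relations. Each factor $\mathbf{k}\vec{\AA}_{p_i-1}(q_i)$ is presented by the linear quiver $0\xrightarrow{x_i}\x_i\xrightarrow{x_i}\cdots\xrightarrow{x_i}(p_i-2)\x_i$ modulo the single relation $x_i^{q_i}=0$, whose vertex set I identify with $\{\lambda\x_i\mid 0\le\lambda\le p_i-2\}$.

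Next, I would invoke the well-known description of the tensor product $\mathbf{k}Q^{(1)}/I^{(1)}\otimes\cdots\otimes\mathbf{k}Q^{(n)}/I^{(n)}$: its Gabriel quiver is the product quiver whose vertices are tuples of vertices of the $Q^{(i)}$'s, whose arrows are those which change exactly one coordinate via an arrow of some $Q^{(i)}$, and whose defining ideal is generated by the commutativity relations between arrows in distinct coordinates together with the original relations $I^{(i)}$ in each coordinate. Under the bijection $(\lambda_1\x_1,\dots,\lambda_n\x_n)\leftrightarrow\sum_{i=1}^n\lambda_i\x_i$, the vertex set becomes $[0,\vdelta]$, the coordinate arrows become $x_i:\x\to\x+\x_i$ with $\x,\x+\x_i\in[0,\vdelta]$, and the ideal becomes exactly the one generated by the two families of relations displayed in the proposition.

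Alternatively, a self-contained proof can be given by constructing an explicit $\mathbf{k}$-algebra morphism $\varphi:\mathbf{k}Q\to\Lambda(\mathbf{q})$ sending the idempotent $e_\x$ to the matrix unit with $1\in S_0$ at position $(\x,\x)$, and the arrow $x_i:\x\to\x+\x_i$ to the matrix whose only nonzero entry is $X_i\in S_{\x_i}$ at position $(\x+\x_i,\x)$. That $\varphi$ kills the prescribed relations is immediate from $X_iX_j=X_jX_i$ and $X_i^{q_i}=0$ in $S$, so it descends to $\bar\varphi:\mathbf{k}Q/I\to\Lambda(\mathbf{q})$. Surjectivity holds because every $S_{\x-\y}$ (with $\x,\y\in[0,\vdelta]$) is spanned by monomials $X_1^{a_1}\cdots X_n^{a_n}$ with $0\le a_i<q_i$, each realized as the image under $\varphi$ of an appropriate path in $Q$ from $\y$ to $\x$; injectivity then follows by dimension count, both sides having dimension $\prod_i\dim_{\mathbf{k}}\mathbf{k}\vec{\AA}_{p_i-1}(q_i)$ by Lemma \ref{iso. algs}.

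The main point to verify carefully is the bookkeeping in the last step: that paths in $Q$ modulo the listed relations are in bijection with tuples $(\y;a_1,\dots,a_n)$ with $0\le a_i<q_i$ and $\y,\y+\sum a_i\x_i\in[0,\vdelta]$. This is precisely where the box-quiver structure enters, and it matches the basis of $\Lambda(\mathbf{q})$ obtained by summing the evident monomial bases of $S_{\x-\y}$ over $\x,\y\in[0,\vdelta]$. Once this combinatorial identification is in place, the proposition follows.
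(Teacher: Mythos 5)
Your second, self-contained route is essentially the paper's own proof: the paper defines the surjection $\mathbf{k}Q\to\Lambda(\mathbf{q})$ by sending $e_{\x}$ to the corresponding idempotent and $x_i$ to $X_i$, checks that the commutativity and $x_i^{q_i}$ relations hold in $S$, and concludes by observing that the induced map on Peirce components $e_{\x}(\mathbf{k}Q/I)e_{\y}\to S_{\x-\y}$ is an isomorphism — exactly the monomial/path bookkeeping you correctly flag as the step needing care (the paper dismisses it as clear). Your first route via the standard presentation of a tensor product of bound quiver algebras is a legitimate variant, but it outsources the same combinatorial identification to that "well-known" fact, so it buys little over the direct argument.
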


	\begin{proof} The vertex $\x$ of $Q$  corresponds to the primitive idempotent $e_{\x}$ of $\Lambda:=\Lambda(\mathbf{q})$ and the arrow $x_i$ of $Q$  corresponds to the generator $X_i$ of $S$ for $1\le i \le n$. Thus there exists a morphism $\mathbf{k}Q \to \Lambda$ of $\mathbf{k}$-algebras extending by these correspondences, which is surjective. The commutativity relations $X_iX_j=X_jX_i$ are satisfied in $S$. Also the relations $X_i^{q_i}=0$ in $S$  correspond to the relations $x_i^{q_i}=0$ in $\mathbf{k}Q$. Thus there is a surjective morphism $\mathbf{k}Q/I\to \Lambda$, where $I=(x_ix_j-x_jx_i, x_i^{q_i}\mid 1\le i,j \le n).$ Indeed, this is a $\mathbf{k}$-algebra isomorphism  since it clearly  induces an isomorphism	$$e_{\x}(\mathbf{k}Q/I)e_{\y}\simeq e_{\x}\Lambda e_{\y}= S_{\x-\y}$$ for any $\x,\y \in Q_0$, as $\mathbf{k}$-vector spaces.	Hence we have the assertion.
	\end{proof}

	\begin{example} \label{example} Let $n=2$ and assume that $R$ be an $\L$-graded BP singularity of type $(3,4)$. Let $\mathbf{q}=(2,2)$ and thus  $S=\mathbf{k}[X_1,X_2]/(X_1^2,X_2^2)$. By Proposition \ref{shape theorem}, $\Lambda(\mathbf{q})$ is given by the following quiver  with relations. Here,  the dotted lines indicate the relations.
		
		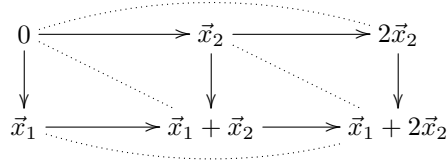
\begin{figure}[H]
			\centering 
			$
			\begin{xy} 0;<7pt,0pt>:<0pt,3.5pt>:: 
				(0,0) *+{\x_1} ="0",(5,0)  ="3", (0,10) *+{0} ="2",  (15,0) ="4", (5,10) ="5",  (15,10) ="6",
				(10,0) *+{\x_1+\x_2} ="1",(10,10) *+{\x_2} ="12",
				(20,0) *+{\x_1+2\x_2} ="11",(20,10) *+{2\x_2} ="112",
				"2", {\ar"0"},"12", {\ar"1"},"12", {\ar"112"},"112", {\ar"11"},
				"0", {\ar"1"},"2", {\ar"12"},"1", {\ar"11"},
				"0", {{\ar@[black]@/_1pc/@{.}"11"}},	
				"2", {{\ar@{.}"1"}},	
				"12", {{\ar@{.}"11"}},	
				"2", {{\ar@[black]@/^1pc/@{.}"112"}},		
			\end{xy}
			$
			\caption{The quiver of $\Lambda(\mathbf{q})$ for $R(3,4)$ and $\mathbf{q}=(2,2)$}
			\label{figure (2,2)}
		\end{figure}
		
	\end{example}
	
	\subsection{The extended tilting $n$-cuboid} 	
	Throughout this subsection, let $R$ be an $\L$-graded BP singularity with weights $p_1,\ldots, p_n$. 
	Recall that  $\vdelta =\sum_{i=1}^{n}(p_i-2)\x_i$ and $\s=\sum_{i=1}^{n}\x_i$. For each $\vell=\sum_{i=1}^{n}\ell_i\x_i\in[\s,\s+\vdelta]$, let
	$$U^{\vell}=\rho(R/(X_i^{\ell_i}\mid 1\le i\le n)),$$ 
	where $\rho$ denotes the composition $\mod^{\L}R \subset \DDD^{\bo}(\mod^{\L}R)\to\DDD_{\rm sg}^{\L}(R)\xrightarrow{\sim} \underline{\CM}^{\L}R$.			
	For each $\x=\sum_{i=1}^{n} \lambda_i\x_i$ with $0\le \x\le \vdelta$, let $ \sigma(\x):= \sum_{i=1}^{n} \lambda_i$. For a subset  $I$ of $\{1,\dots,n\}$, we set $\vdelta_I:=\sum_{i\in I}(p_i-2)\x_i$ and $I^{c}:=\{1,\dots,n\}\setminus I$. 
	Denote by $[1]$  the suspension functor in the triangulated category $\underline{\CM}^{\L} R$.

	\begin{lemma}\label{Hom} For any $\s\le \vell,\z \le \s+\vdelta_I$ and $0\le \x, \y\le \vdelta-\vdelta_{I}$, we have 
		\begin{eqnarray*}
			\underline{\Hom}(U^{\vell}(\x)[-\sigma(\x)], U^{\z}(\y)[-\sigma(\y)])=
			\begin{cases}
				\mathbf{k} &  \text{if } \vell\ge \z \text{ and } 0\le \x-\y \le \s,\\
				0& \text{otherwise.}
			\end{cases}
		\end{eqnarray*}
	\end{lemma}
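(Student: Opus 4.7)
The plan is to reduce the computation to the form $\underline{\Hom}(U^{\vell'}, U^{\z'}(\vec m'))$ (no suspension shifts), and then to proceed by induction on the total weight $p_1+\cdots+p_n$, using the ladder from Theorem~\ref{ladder diagram} together with Proposition~\ref{U decomp.} to cut down the variable $p_n$. First I would use the shift-adjunction to rewrite
\[
\underline{\Hom}(U^{\vell}(\x)[-\sigma(\x)],\, U^{\z}(\y)[-\sigma(\y)]) \;\simeq\; \underline{\Hom}(U^{\vell},\, U^{\z}(\y-\x)[\sigma(\x)-\sigma(\y)]).
\]
Setting $\vec m:=\x-\y$, the desired conditions read ``$\vell\ge\z$ and $0\le\vec m\le\s$''. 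Note that the constraints $\vell\in[\s,\s+\vdelta_I]$ and $\x,\y\in[0,\vdelta-\vdelta_I]$ force $\ell_i=1$ for $i\in I^c$ and $m_i=0$ for $i\in I$, so $\vec m$ has components in $\{-1,0,1\}$ supported on $I^c$.

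Next I would apply Eisenbud periodicity $[2]\simeq(\c)$ (Proposition~\ref{c=[2]}) to absorb the even part of $[-\sigma(\vec m)]$ as a degree shift, and use the explicit formula $U^{\vell}[1]=U^{\vell+(p_i-2\ell_i)\x_i}((p_i-\ell_i)\x_i)$ of Corollary~\ref{x_i=[1]}(b) to handle the remaining $[\pm1]$. Because $\ell_i=1$ for $i\in I^c$, applying $[-1]$ in direction $i\in I^c$ is particularly clean: $U^{\vell}(\x_i)[-1]=U^{\vell+(p_i-2)\x_i}$, which reduces the whole expression to a Hom of the form $\underline{\Hom}(U^{\vell'},U^{\z'}(\vec m'))$. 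For the base case of the induction, either $n=1$ (where $\underline{\CM}^{\L}R\simeq\DDD^{\bo}(\mod\mathbf{k}\vec\AA_{p_1-1})$ and the claim is the standard Hom-description of the linear quiver) or some $p_i=2$, reduced to smaller $n$ by Knörrer periodicity (Proposition~\ref{thick}(b)). For the inductive step ($p_n\ge 3$), Proposition~\ref{U decomp.} writes each $U^{\vell'}$, $U^{\z'}$ as the image of $\psi_{1,q-1}$ (when the $n$-th component is $<p_{1,n}$) or $\psi_{2,0}$ (when it is $\ge p_{1,n}$). If both objects come from the same insertion functor, fully-faithfulness (Lemma~\ref{RI-adjoint triple}(a)) reduces the Hom to one in $\underline{\CM}^{\L_j}R^j$, handled by the inductive hypothesis. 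If they come from different insertion functors, one direction vanishes because $\phi_{1,q}\psi_{2,0}=0$ (established in Step~1 of the proof of Theorem~\ref{ladder diagram}) combined with the adjunction $(\psi_{1,q-1},\phi_{1,q})$; the opposite direction vanishes by Auslander–Reiten–Serre duality (Theorem~\ref{Auslander-Reiten-Serre duality}), which converts it to a Hom in the same forbidden form.

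The main obstacle is the bookkeeping in the shift-reduction step and in translating the $\L$-shift $(\vec m')$ across the insertion functors. The issue is that $\psi_{j,k}(-)(\vec m')$ is \emph{not} in general $\psi_{j,k}$ applied to something, since $\psi_{j,k}=(-k\x_n)\psi_{j,0}(k\x_{j,n})$ and $\theta_j$ is not a group homomorphism; whether $\psi_{j,k}(A)(\vec m')$ still lies in $\Im\psi_{j,k}$ depends on the $n$-component of $\vec m'$ and on the values $\ell_n,z_n$ relative to the cut-off $p_{1,n}$. One therefore has to branch carefully on the position of the $n$-th components of the shifted data, using Lemma~\ref{L-action to U} to handle boundary cases where a degree-shift or a $[\pm 1]$ moves an object from $\Im\psi_{1,q-1}$ into $\Im\psi_{2,0}$ or vice versa. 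Once this case analysis is in place, the combinatorial conditions ``$\vell\ge\z$ and $0\le\x-\y\le\s$'' match exactly the grid structure of $\bigotimes_{i\in I}\mathbf{k}\vec\AA_{p_i-1}\otimes\bigotimes_{i\in I^c}\mathbf{k}\vec\AA_{p_i-1}(2)$ described in Proposition~\ref{shape theorem}.
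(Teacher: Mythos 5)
Your overall architecture (induction on the weights, decomposition of the $U$'s via the insertion functors of the ladder, fully-faithfulness when both objects come from the same $\psi$, and a separate treatment of the mixed cases) matches the paper's proof. But there is a genuine error in your treatment of the mixed cases. You claim that when the two objects come from different insertion functors, \emph{both} directions of the Hom vanish --- one by $\phi_{1,q}\psi_{2,0}=0$ plus adjunction, the other by Auslander--Reiten--Serre duality. This cannot be right, because the lemma itself asserts that the Hom equals $\mathbf{k}$ whenever $\vell\ge\z$ and $0\le\x-\y\le\s$, and such pairs routinely land in different images: e.g.\ for type $(3,4)$ with $I=\{1,2\}$, $U^{\s+2\x_2}\in\Im\psi_{2,0}$ and $U^{\s+\x_2}\in\Im\psi_{1,2}$, yet $\underline{\Hom}(U^{\s+2\x_2},U^{\s+\x_2})=\mathbf{k}$ since $\s+2\x_2\ge\s+\x_2$. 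Only one mixed direction is killed by $\phi_{1,q}\psi_{2,0}=0$ (and in that direction $\vell\ge\z$ indeed fails, consistent with the answer $0$). The other mixed direction must actually be \emph{computed}: the paper does this by applying the other adjoint reduction functor (e.g.\ $(\psi_{2,0},\phi_{2,1})$ in Case~1, or $(\phi_{1,1},\psi_{1,1})$ in Case~2), using Lemma~\ref{L-action to U}(a) to identify the image explicitly, and then invoking the inductive hypothesis in the smaller category, where the combinatorial condition $\vell\ge\z$ translates correctly. Serre duality does not help here: it twists by $(\w)[n-2]$, which moves the object to a different $U^{\vell'}(\cdot)[\cdot]$ whose membership in $\Im\psi_{1,q-1}$ versus $\Im\psi_{2,0}$ changes, and in any case the Hom it computes is genuinely nonzero in the allowed range.

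A second, smaller slip: from $\s\le\vell\le\s+\vdelta_I$ and $0\le\x,\y\le\vdelta-\vdelta_I$ you correctly get $\ell_i=1$ for $i\in I^c$ and $m_i=0$ for $i\in I$ where $\vec m=\x-\y$, but the components $m_i$ for $i\in I^c$ range over $[-(p_i-2),\,p_i-2]$, not $\{-1,0,1\}$; the restriction to $\{0,1\}$ is part of the conclusion ($0\le\x-\y\le\s$), not a hypothesis you may assume. This matters because your shift-reduction step leans on the suspension exponent $\sigma(\x)-\sigma(\y)$ being small. The paper avoids this by never separating the degree shift from the suspension: it keeps the objects in the normalized form $U^{\vell}(\x)[-\sigma(\x)]$ throughout and tracks how the insertion/reduction functors act on exactly these objects, which is also where your acknowledged ``bookkeeping obstacle'' is actually resolved (the case split is on whether $n\in I$, which determines which rung of the ladder to use).
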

	\begin{proof}	We endow $\mathcal{S}:=\{ (q_1,\dots,q_n)\in \Z^{n} \mid 2\le q_i\le p_i \}$ with a partial order defined componentwise, that is,  $(a_1,\dots,a_n)\le (b_1,\dots,b_n)$ if and only if $a_i\le b_i$ for any $1\le i \le n$.
		We use induction on $\mathcal{S}$ with respect to the partial order.  
				
		Assume $p_i=2$ for all $1\le i \le n$. The assertion reduces to the claim that we have to show $\underline{\End}(\rho(\mathbf{k}))=\mathbf{k}$. By Kn\"orrer periodicity, there is a triangle equivalence $\CM^{\L} R\simeq \CM^{\L'} R'$, where $R':=\mathbf{k}[Y]/(Y^2)$ is an $\L'$-graded BP singularity with weight two. Thus $\underline{\End}(\rho(\mathbf{k})) \simeq {\End}_{\underline{\CM}^{\L'} R'}(\rho(\mathbf{k}))=\mathbf{k}$.    
			
		Next we consider the induction step. Interchange the roles of two weights that changes neither $R$ nor $\L$,  and thus without loss of generality we can assume $p_n\ge 3$. Assume that $R^{j}$ be an $\L_j$-graded BP singularity  with weights $p_1,\ldots,p_{n-1},p_{j,n}$ for $j=1,2$, where $p_{1,n}=p_n-1$ and $p_{2,n}=2$. We deal with the expression $H:=\underline{\Hom}(U^{\vell}(\x)[-\sigma(\x)], U^{\z}(\y)[-\sigma(\y)])$ depending on whether $n\in I$, and thus we need discuss the following two cases.
		
		\emph{Case $1$}:  $n\in I$.  By Theorem \ref{ladder diagram}, we obtain a piece of ladder
		\begin{align*}
			\xymatrix{
				\underline{\CM}^{\L_1}{R^1}
				\ar[rrr]|{{\psi}_{1,p_n-2}}
				\ar@/_3pc/[rrr]|{{\psi}_{1,p_n-1}}
				&&& \underline{\CM}^{\L}{R}
				\ar@/_1.5pc/[lll]|{{\phi}_{1,p_n-2} }
				\ar@/^1.5pc/[lll]|{{\phi}_{1,p_n-1}}
				\ar[rrr]|{{\phi}_{2,0}}
				\ar@/_3pc/[rrr]|{{\phi}_{2,1}}
				&&& \underline{\CM}^{\L_2}{R^2}.
				\ar@/_1.5pc/[lll]|{{\psi}_{2,-1}}
				\ar@/^1.5pc/[lll]|{{\psi}_{2,0}}
			}
		\end{align*} Write $\ell_n:=l_n(\vell)$ and $z_n:=l_n(\z)$. Immediate from Proposition \ref{U decomp.}, we have		
		\begin{align*}
		U^{\vell}(\x)[-\sigma(\x)]= 
			\begin{cases}
			 {\psi}_{1,p_n-2} (U_1^{\theta_1^{-1}(\vell)}({\theta_1^{-1}(\x)})[-\sigma(\x)] )	 &  \text{if } 1\le\ell_n \le p_n-2,\\
				{\psi}_{2,0} (U_2^{\theta_2^{-1}(\vell-(p_n-2)\x_n)}({\theta_2^{-2}(\x)})[-\sigma(\x)] ) & \text{if } \ell_n = p_n-1.
			\end{cases}
		\end{align*} 
		 By the induction hypothesis, the assertion holds if $1\le \ell_n,z_n \le p_n-2$ or $\ell_n=z_n=p_n-1$.	 Next we need deal with the following two cases: (a) $1\le \ell_n \le p_n-2$ and $z_n=p_n-1$; (b) $ \ell_n=p_n-1 $ and $1\le z_n \le p_n-2$. Concerning (a), we have $H=0$ since ${\phi}_{1,p_n-1}{\psi}_{2,0}=0$. Concerning (b), by the second case of Lemma \ref{L-action to U}(a),  
		\begin{align*}
			H&={\Hom}_{\underline{\CM}^{\L_2} R^2}(U_2^{\theta_2^{-1}(\vell-(p_n-2)\x_n)}(\theta_2^{-1}(\x))[-\sigma(\x)],{\phi}_{2,1}(U^{\z}(\y)[-\sigma(\y)]))\\
			&={\Hom}_{\underline{\CM}^{\L_2} R^2}(U_2^{\theta_2^{-1}(\vell-(p_n-2)\x_n)}(\theta_2^{-1}(\x))[-\sigma(\x)],
			U_2^{\theta_2^{-1}(\z-z_n\x_n+\x_n)}(\theta_2^{-1}(\y))[-\sigma(\y)]).
		\end{align*}
		Since $\ell_n=p_n-1$, $\vell \ge \z$ is equivalent to $\vell-(p_n-2)\x_n \ge \z-z_n\x_n+\x_n$. By the induction hypothesis, the assertion holds for the case (b).
		 
		\emph{Case $2$}:  $n\notin I$. By Theorem \ref{ladder diagram}, we obtain a recollement 
		\begin{equation*}
			\xymatrix{
				\underline{\CM}^{\L_1}{R^1}
				\ar[rrr]|{{\psi}_{1,1}}
			%	\ar@/_3pc/[rrr]|{{\psi}_{1,2}}
				&&& \underline{\CM}^{\L}{R}
				\ar@/_1.5pc/[lll]|{{\phi}_{1,1} }
				\ar@/^1.5pc/[lll]|{{\phi}_{1,2}}
				\ar[rrr]|{{\phi}_{2,3-p_n}}
			%	\ar@/_3pc/[rrr]|{{\phi}_{2,4-p_n}}
				&&& \underline{\CM}^{\L_2}{R^2}.
				\ar@/_1.5pc/[lll]|{{\psi}_{2,2-p_n}}
				\ar@/^1.5pc/[lll]|{{\psi}_{2,3-p_n}}
			}
		\end{equation*}% 		
		Write $\x=\sum_{i=1}^{n}\lambda_i\x_i$ and $\y=\sum_{i=1}^{n}y_i\x_i$ in normal form. We claim that  
		\begin{align*}
			U^{\vell}(\x)[-\sigma(\x)]= 
			\begin{cases}
				{\psi}_{1,1}(	U_1^{\theta_1^{-1}(\vell)}(\theta_1^{-1}(\x))[-\sigma(\x)])	 &  \text{if } 0\le \lambda_n \le p_n-3,\\
				{\psi}_{2,3-p_n}(U_2^{\theta_2^{-1}(\vell)}(\theta_2^{-1}(\x-\lambda_n\x_n))[-\sigma(\x-\lambda_n\x_n)])  & \text{if } \lambda_n=p_n-2.
			\end{cases}
		\end{align*} 
		For the case $0\le \lambda_n \le p_n-3$, it follows directly from Lemma \ref{L-action to U}(b). 
		Next we consider the remaining case $\lambda_n=p_n-2$. Since $p_{2,n}=2$,  we have $U_2^{\theta_2^{-1}({\vell})}(\x_{2,n})=U_2^{\theta_2^{-1}({\vell})}[1]$ by Corollary \ref{x_i=[1]}.
		Then $U_2^{\theta_2^{-1}({\vell})}((3-p_n)\x_{2,n})=U_2^{\theta_2^{-1}({\vell})}(\x_{2,n})[2-p_n]$. 
		It follows from Lemma \ref{L-action to U}(b) that we have 		
		\begin{align*}
			&\quad{\psi}_{2,3-p_n}(U_2^{\theta_2^{-1}(\vell)}(\theta_2^{-1}(\x-(p_n-2)\x_n))[-\sigma(\x-(p_n-2)\x_n)])\\
			&=((p_n-3)\x_{n}){\psi}_{2,0}(U_2^{\theta_2^{-1}(\vell)}(\theta_2^{-1}(\x-(p_n-2)\x_n)+\x_{2,n})[-\sigma(\x)])\\
			&=U^{\vell}(\x)[-\sigma(\x)].
		\end{align*}
		By the induction hypothesis, the assertion holds if $0\le \lambda_n,y_n \le p_n-3$ or $\lambda_n=y_n=p_n-2$.	 Next we need deal with the following two cases: ($\alpha$) $0\le \lambda_n \le p_n-3$ and $y_n=p_n-2$; ($\beta$) $\lambda_n=p_n-2$ and $0\le y_n \le p_n-3$. 	Concerning ($\alpha$), we have $H=0$ since ${\phi}_{1,2}{\psi}_{2,3-p_n}=0$. Concerning ($\beta$), by the third case of Lemma \ref{L-action to U}(a), 
		%by the second case of Lemma \ref{L-action to U}(a), 
		\begin{align*}
			H&={\Hom}_{\underline{\CM}^{\L_1} R^1}({{\phi}_{1,1}}(U^{\vell}(\x)[-\sigma(\x)]),	U_1^{\theta_1^{-1}(\z)}(\theta_1^{-1}(\y))[-\sigma(\y)])\\
			&={\Hom}_{\underline{\CM}^{\L_1} R^1}(U_1^{\theta_1^{-1}(\vell)}(\theta_1^{-1}(\x-\lambda_n\x_n+\c)-\x_{1,n})[-\sigma(\x)],	U_1^{\theta_1^{-1}(\z)}(\theta_1^{-1}(\y))[-\sigma(\y)]).
		\end{align*}
		Notice that $\theta_1^{-1}(\x-\lambda_n\x_n+\c)-\x_{1,n}=\sum_{i=1}^{n}\lambda_i\x_{1,n}$ and $\theta_1^{-1}(\y)=\sum_{i=1}^{n}y_i\x_{1,n}$. Then $0\le \x-\y \le \s$ implies that $0\le \theta_1^{-1}(\x-\lambda_n\x_n+\c)-\x_{1,n}-\theta_1^{-1}(\y) \le \s_1$. By the induction hypothesis, the assertion holds for the case ($\beta$).		
	%	Putting things together, we have shown the assertion.
	\end{proof}
		
		Recall that for a triangulated category $\mathcal{T}$, an object $T \in \mathcal{T}$ is \emph{tilting} if
		\begin{itemize}
			\item[(a)] $T$ is \emph{rigid}, that is $\Hom_{\mathcal{T}}(T,T[m])=0$ for all $m \neq 0$.
			\item[(b)] $T$ generates $\mathcal{T}$, that is, $\thick T=\mathcal{T}$, where $\thick T$ denotes by the smallest thick subcategory of $\mathcal{T}$ containing $T$. 					
		\end{itemize}	
			
		  An object $E$ is called \emph{exceptional} in $\mathcal{T}$ if $\End_{\mathcal{T}}(E) = \mathbf{k}$ and $E$ is rigid. Moreover, a sequence of exceptional objects $(E_1,\dots,E_r)$ in $\mathcal{T}$ is called an  \emph{exceptional sequence} if $\Ext_{\mathcal{T}}(E_i,E_j[m])=0$ holds for all  $i>j$  and $m\in \Z$. It is called \emph{strong} if  $\Ext_{\mathcal{T}}(E_i,E_j[m])=0$ holds for all  $i\neq j$ and $m\in \Z$. An exceptional sequence is called \emph{full} if it generates $\mathcal{T}$, that is, $\thick (\bigoplus_{i=1}^{r}E_i)=\mathcal{T}$. 
	
		Gluing techniques for tilting objects in triangulated categories  have been widely studied, see for example \cite{AKL,DZ,LVY,SZ}. 
		The following observation provides a useful and simple method to glue tilting objects for BP singularities.
	
			\begin{proposition} \label{gluing tilting} \cite[Theorem 5.2]{DZ} Let $(\mathcal{T}_1, \mathcal{T}, \mathcal{T}_2)$ be a recollement of form (\ref{recollement}). Assume that $T_1$ and $T_2$ are tilting objects in $\mathcal{T}_1$ and $\mathcal{T}_2$, respectively. Then the following conditions are equivalent.
			\begin{itemize}
				\item[(a)] $T=i_{\ast}(T_1)\oplus j_*(T_2)$ is a tilting object in $\mathcal{T}$.
				\item[(b)] $\Hom_{\mathcal{T}_1}(i^*j_*(T_2),T_1[m])=0$ for any integer $m\neq 0$.
			\end{itemize}
			If moreover $j_{\ast}$ admits a right adjoint $j^{\sharp}$, then the
			following condition is also equivalent to the ones above.
			\begin{itemize} \item[($b'$)] $\Hom_{\mathcal{T}_2}(T_2,j^{\sharp}i_{\ast}(T_1)[m])=0$ for any integer $m\neq 0$.
			\end{itemize}		
		\end{proposition}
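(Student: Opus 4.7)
The plan is to verify separately the two defining conditions of a tilting object—rigidity and thick generation—for the candidate $T = i_{\ast}(T_1) \oplus j_{\ast}(T_2)$ in $\mathcal{T}$, and then to observe that (b) and ($b'$) are two reformulations of the same Hom-vanishing.

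For rigidity, I would expand $\Hom_{\mathcal{T}}(T,T[m])$ into its four bilinear summands and handle each via the adjoint triples of the recollement. The two diagonal terms $\Hom_{\mathcal{T}}(i_{\ast}T_1, i_{\ast}T_1[m])$ and $\Hom_{\mathcal{T}}(j_{\ast}T_2, j_{\ast}T_2[m])$ reduce by the adjunctions $(i^{\ast}, i_{\ast})$ and $(j^{\ast}, j_{\ast})$, together with the natural isomorphisms $i^{\ast}i_{\ast} \simeq \mathrm{id}_{\mathcal{T}_1}$ and $j^{\ast}j_{\ast} \simeq \mathrm{id}_{\mathcal{T}_2}$ that come from full faithfulness of $i_{\ast}$ and $j_{\ast}$, to $\Hom_{\mathcal{T}_1}(T_1, T_1[m])$ and $\Hom_{\mathcal{T}_2}(T_2, T_2[m])$, both of which vanish for $m \neq 0$ since $T_1$ and $T_2$ are tilting. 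The cross term $\Hom_{\mathcal{T}}(i_{\ast}T_1, j_{\ast}T_2[m]) \simeq \Hom_{\mathcal{T}_2}(j^{\ast}i_{\ast}T_1, T_2[m])$ vanishes identically because $j^{\ast}i_{\ast} = 0$, an immediate consequence of $\Im(i_{\ast}) = \ker(j^{\ast})$. The remaining term $\Hom_{\mathcal{T}}(j_{\ast}T_2, i_{\ast}T_1[m]) \simeq \Hom_{\mathcal{T}_1}(i^{\ast}j_{\ast}T_2, T_1[m])$ is exactly the Hom group in (b); hence the rigidity of $T$ is equivalent to condition (b).

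For thick generation, I would invoke the canonical triangle
\[
i_{\ast}i^{!}X \to X \to j_{\ast}j^{\ast}X \to i_{\ast}i^{!}X[1]
\]
attached to each $X \in \mathcal{T}$, a standard output of the recollement axioms. Because $i_{\ast}$ is fully faithful, it induces an equivalence $\mathcal{T}_1 \simeq \Im(i_{\ast})$, so $\thick(T_1) = \mathcal{T}_1$ gives $\thick(i_{\ast}T_1) = \Im(i_{\ast})$; similarly $\thick(j_{\ast}T_2) = \Im(j_{\ast})$. The canonical triangle then places every $X$ in $\thick(\Im(i_{\ast}) \cup \Im(j_{\ast})) = \thick T$, so generation holds without any additional hypothesis. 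Combining with the previous paragraph yields (a) $\Leftrightarrow$ (b).

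For the equivalence with ($b'$) under the extra hypothesis that $j_{\ast}$ admits a right adjoint $j^{\sharp}$, the adjunction $(j_{\ast}, j^{\sharp})$ rewrites
\[
\Hom_{\mathcal{T}}(j_{\ast}T_2, i_{\ast}T_1[m]) \simeq \Hom_{\mathcal{T}_2}(T_2, j^{\sharp} i_{\ast}T_1[m]),
\]
identifying ($b'$) with the sole non-trivial cross term appearing in (b). The argument is largely mechanical; the only place that requires care is to use precisely the canonical triangle built from $i^{!}$ and $j^{\ast}$ (rather than the parallel one from $i^{\ast}$ and $j_{!}$), because it is this specific choice that matches the decomposition $T = i_{\ast}T_1 \oplus j_{\ast}T_2$ and makes generation automatic, thereby ensuring that no second hypothesis besides (b) is needed.
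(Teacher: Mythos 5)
The paper does not prove this proposition; it is quoted from \cite[Theorem 5.2]{DZ}, so there is no internal proof to compare against. Your argument is the standard one and it is correct and complete: the four-fold decomposition of $\Hom_{\mathcal{T}}(T,T[m])$ handles rigidity, with the two diagonal terms vanishing by full faithfulness of $i_{\ast}$ and $j_{\ast}$, one cross term vanishing because $j^{\ast}i_{\ast}=0$ (from $\Im(i_{\ast})=\ker(j^{\ast})$), and the remaining cross term being exactly condition (b) via the $(i^{\ast},i_{\ast})$ adjunction, or exactly condition ($b'$) via the $(j_{\ast},j^{\sharp})$ adjunction when the latter exists. The generation step is also right: the canonical triangle $i_{\ast}i^{!}X\to X\to j_{\ast}j^{\ast}X\to i_{\ast}i^{!}X[1]$ reduces generation to $\Im(i_{\ast})\cup\Im(j_{\ast})\subseteq\thick(T)$, and this follows because the preimage under the fully faithful triangle functor $i_{\ast}$ (resp.\ $j_{\ast}$) of $\thick(i_{\ast}T_1)$ (resp.\ $\thick(j_{\ast}T_2)$) is a thick subcategory of $\mathcal{T}_1$ (resp.\ $\mathcal{T}_2$) containing the tilting object, hence everything. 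Your closing remark about needing the $(i^{!},j^{\ast})$ triangle rather than the $(i^{\ast},j_{!})$ one is a fair stylistic point but not essential, since $j_!N$ and $j_{\ast}N$ differ by an object of $\Im(i_{\ast})$ and either triangle would ultimately yield generation.
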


			 	Our  main result in this section is the following.
		
		\begin{theorem} \label{tilting object} The object
			$$V=V_I:=\bigoplus_{\s\leq\vell\leq\s+\vdelta_I}\bigoplus_{0\leq\x\leq\vdelta-\vdelta_{I}}U^{\vell}(\x)[-\sigma(\x)]$$	
			%$$V:=\bigoplus_{\vell\in[\s,\s+\vdelta_I]}\bigoplus_{\x\in[0,\vdelta-\vdelta_{I}]}U^{\vell}(\x)[-\sigma(\x)]$$		
			is   tilting  in $\underline{\CM}^{\L} R$, called the \emph{extended tilting $n$-cuboid} with endomorphism algebra 
			\begin{align}\label{shape of V}
			\underline{\End}(V)^{\rm op}\simeq \, \bigotimes_{ i\in  I} \mathbf{k}\vec{\AA}_{p_i-1} \otimes  \bigotimes_{i\in I^{c}} \mathbf{k}\vec{\AA}_{p_i-1}(2).
			\end{align}
			Moreover, the objects $U^{\vell}(\x)[-\sigma(\x)]$ $(\s\leq\vell\leq\s+\vdelta_I,\, 0\le \x \le \vdelta-\vdelta_I)$ can be arranged to form a full, strong exceptional sequence.	
		\end{theorem}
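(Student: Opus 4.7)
The plan is to establish three things: (i) a complete computation of $\underline{\Hom}(V_I, V_I[m])$ for all $m \in \Z$, yielding both rigidity and the shape of the endomorphism algebra; (ii) generation of $\underline{\CM}^{\L} R$ by induction on $|I^{c}|$; and (iii) a full strong exceptional sequence through an explicit ordering. The main tools are Lemma \ref{Hom}, the Eisenbud periodicity from Proposition \ref{c=[2]}, and the ladder of Theorem \ref{ladder diagram}.

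For (i), Lemma \ref{Hom} already describes the degree-zero Hom between summands of $V_I$. To extend to arbitrary shifts, I would apply Proposition \ref{c=[2]} to rewrite $U^{\z}(\y)[m]$ as a $U$-type object with shifted grading: $U^{\z}(\y)[2k] = U^{\z}(\y + k\c)$, and $U^{\z}(\y)[2k+1] = U^{\z + (p_i - 2z_i)\x_i}(\y + k\c + (p_i - z_i)\x_i)$ for any chosen $i$. For $m \ne 0$ these transformations push the effective grading shift outside the interval $[0, \s]$ required by (the natural extension of) Lemma \ref{Hom} to $\vell, \z \in [\s, \s+\vdelta]$ and $\x, \y \in \L$, so $\underline{\Hom}(V_I, V_I[m]) = 0$. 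For the endomorphism algebra, the bijection $(\vell, \x) \mapsto \vec{u} := \vell - \s + \x \in [0, \vdelta]$ re-indexes the summands, and under it the non-vanishing condition of Lemma \ref{Hom} becomes that $\vec{u}_1 - \vec{u}_2$ is componentwise non-negative, with $i$-th coordinate at most $p_i - 2$ for $i \in I$ and at most $1$ for $i \in I^{c}$. This matches $\Lambda(\mathbf{q})$ of Proposition \ref{shape theorem} with $q_i = p_i - 1$ for $i \in I$ and $q_i = 2$ for $i \in I^{c}$, and Lemma \ref{iso. algs} identifies it with $\bigotimes_{i \in I} \mathbf{k}\vec{\AA}_{p_i - 1} \otimes \bigotimes_{i \in I^{c}} \mathbf{k}\vec{\AA}_{p_i - 1}(2)$.

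For (ii), I would induct on $|I^{c}|$. The base case $I = \{1, \ldots, n\}$ is the tilting $n$-cuboid, known to generate by Kussin-Lenzing-Meltzer, Futaki-Ueda, and Herschend-Iyama-Minamoto-Oppermann. For the inductive step, fix $n \in I^{c}$ and set $I' = I \cup \{n\}$; the inductive hypothesis gives $\thick V_{I'} = \underline{\CM}^{\L} R$, so it suffices to show every summand $U^{\vell}(\x)[-\sigma(\x)]$ of $V_{I'}$ (where $\ell_n \in [1, p_n - 1]$ and $x_n = 0$) lies in $\thick V_I$. The case $\ell_n = 1$ is trivial, and for $\ell_n \ge 2$ we apply $\rho$ to the short exact sequence
\[
0 \to E^{\vell - (\ell_n - 1)\x_n}((\ell_n - 1)\x_n) \to E^{\vell} \to E^{\vell - \x_n} \to 0
\]
in $\mod^{\L} R$ (whose kernel is the submodule generated by $X_n^{\ell_n - 1}$) and shift by $(\x)[-\sigma(\x)]$. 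The resulting triangle has first term a suspension of a summand of $V_I$, since the new index $\vell - (\ell_n - 1)\x_n$ has $n$-coordinate $1$ and the new grading $(\ell_n - 1)\x_n + \x$ lies in $[0, \vdelta - \vdelta_I]$, while its third term is a summand of $V_{I'}$ with $\ell_n$ reduced by one; downward induction on $\ell_n$ concludes. For (iii), I would order the summands by decreasing lexicographic order on $\vec{u}$: if $i > j$, then $\vec{u}_i <_{\mathrm{lex}} \vec{u}_j$, so $\vec{u}_i - \vec{u}_j$ has a negative coordinate at the first differing position, and Lemma \ref{Hom} gives degree-zero vanishing while (i) gives higher-degree vanishing, yielding the full strong exceptional sequence.

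The main obstacle is step (i): Lemma \ref{Hom} is stated only for $\vell, \z \in [\s, \s + \vdelta_I]$ and $\x, \y \in [0, \vdelta - \vdelta_I]$, but the Eisenbud-periodicity transformations above move these parameters outside the stated ranges. Establishing the required extension to $\vell, \z \in [\s, \s + \vdelta]$ and arbitrary $\x, \y \in \L$ should follow by the same ladder-based induction as in the proof of Lemma \ref{Hom}, but tracking how the reduction and insertion functors of Lemma \ref{L-action to U} transform the enlarged parameter ranges is the most delicate part of the argument.
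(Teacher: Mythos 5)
The central gap is in your step (i). Rigidity of $V_I$ requires the vanishing of $\underline{\Hom}(U^{\vell}(\x),U^{\z}(\y)[m])$ for $m\neq 0$, and after applying Proposition \ref{c=[2]} you land on Hom-spaces $\underline{\Hom}(U^{\vell}(\x),U^{\z'}(\y'))$ whose parameters $\z'\in[\s,\s+\vdelta]$ and $\y'\in\L$ lie outside the ranges $[\s,\s+\vdelta_I]$ and $[0,\vdelta-\vdelta_I]$ to which Lemma \ref{Hom} applies. You acknowledge this, but the assertion that the extension ``should follow by the same ladder-based induction'' is precisely the hard content: the inductive proof of Lemma \ref{Hom} is tailored to those ranges (the case split on whether $n\in I$ determines which insertion functor realizes each summand), and a formula valid for all $\y\in\L$ would in effect encode the full Hom-structure of $\DDD^{\bo}(\bigotimes_i\mathbf{k}\vec{\AA}_{p_i-1})$ between arbitrary degree shifts of the $U^{\vell}$ --- something neither stated nor proved anywhere in the paper, and not a routine corollary of what is. The paper's proof is organized specifically to avoid this computation: it inducts on the weight tuple, decomposes $V=\psi_{1,\ast}(T_1)\oplus\psi_{2,\ast}(T_2)$ along the ladder of Theorem \ref{ladder diagram}, and invokes the gluing criterion (Proposition \ref{gluing tilting}), so that rigidity \emph{and} generation follow simultaneously from the rigidity of $T_1,T_2$ (inductive hypothesis) together with the single cross-term check that $\phi\,\psi_{\ast}(T_2)$ lies in $\add T_1$ (or equals $T_2$), which Lemma \ref{L-action to U} computes explicitly. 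Without either the gluing criterion or a proved extension of Lemma \ref{Hom}, your rigidity claim is unsupported.

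The remaining steps are essentially sound but differ from the paper. Your generation argument --- descending induction on $|I^c|$ from the known tilting $n$-cuboid of Corollary \ref{tilting n-cub}, using the filtration of $E^{\vell}$ by powers of $X_n$ --- is a legitimate alternative to the paper's gluing argument (which gets generation for free), though note that with the convention $(M(\y))_{\x}=M_{\x+\y}$ the submodule generated by $X_n^{\ell_n-1}$ is $E^{\vell-(\ell_n-1)\x_n}(-(\ell_n-1)\x_n)$, so the first term of your triangle carries a \emph{negative} multiple of $\x_n$ in its degree shift and must still be converted, via Corollary \ref{x_i=[1]}(b), into a suspension of an actual summand of $V_I$; this is fixable but not automatic as written. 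Your re-indexing $(\vell,\x)\mapsto\vell-\s+\x$ for the endomorphism algebra, and the lexicographic ordering for the exceptional sequence, match the paper's Steps 2 and 3 and are fine --- but both rest on step (i), so the proof as proposed is incomplete.
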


		\begin{proof}%[Proof of Theorem~\rm\ref{tilting object}] 
			We endow $\mathcal{S}:=\{ (q_1,\dots,q_n)\in \Z^{n} \mid 2\le q_i\le p_i \}$ with a partial order defined componentwise. 
			
		\emph{Step $1$}: We show that $V$ is tilting in $\underline{\CM}^{\L} R$ by induction on $\mathcal{S}$.  
		
		Assume $p_i=2$ for all $1\le i \le n$. Clearly $V=\rho(k)$ is  a tilting object  in $\underline{\CM}^{\L} R$. In fact, $V=\rho(\mathbf{k})$ is a rigid object in $\underline{\CM}^{\L} R$ by Kn\"orrer periodicity, and $\thick \langle \rho(k) \rangle=\underline{\CM}^{\L} R$ by Lemma \ref{thick} and Corollary \ref{x_i=[1]}. 	
					
		 Next we consider the induction step. Interchange the roles of two weights that changes neither $R$ nor $\L$,  and thus without loss of generality we can assume $p_n\ge 3$. Suppose that the claim holds for all weight type $(q_1,\dots,q_n)<(p_1,\dots,p_n)$. Let $R^{j}$ be an $\L_j$-graded BP singularity  with weights $p_1,\ldots,p_{n-1},p_{j,n}$ for $j=1,2$, where $p_{1,n}=p_n-1$ and $p_{2,n}=2$. 
	 By the induction hypothesis,  for $j=1,2$, the object 
		 $$T_j= \bigoplus_{\s_j\le \vell \le \s_j+\vdelta_{j_I}} \bigoplus_{0\le \x \le \vdelta_{j}-\vdelta_{j_I}} U_j^{\vell}(\x)[-\sigma(\x)]$$ 	
		 is tilting in $\underline{\CM}^{\L_j}{R^j}$,  where  
		 $\vdelta_{j_I}:=\sum_{i\in I}(p_{j,i}-2)\x_{j,i}$  with $p_{j,i}:=p_i$ for $1\le i <n$.
		Depending on whether $n\in I$, we have to consider the following two cases.

		\emph{Case $1$}:  $n\in I$.  In this case,  $\vdelta_{1_I}=\sum_{i\in I\setminus\{n\}}(p_{i}-2)\x_{1,i}+(p_{1,n}-2)\x_{1,n}$ and $\vdelta_{2_I}=\sum_{i\in I\setminus\{n\}}(p_{i}-2)\x_{2,i}$.  By Theorem \ref{ladder diagram}, this yields a recollement
		\begin{equation*}
			\xymatrix{
				\underline{\CM}^{\L_1}{R^1}
				\ar[rrr]|{{\psi}_{1,p_n-2}}
				&&& \underline{\CM}^{\L}{R}
				\ar@/_1.5pc/[lll]|{{\phi}_{1,p_n-2} }
				\ar@/^1.5pc/[lll]|{{\phi}_{1,p_n-1}}
				\ar[rrr]|{{\phi}_{2,0}}
				&&& \underline{\CM}^{\L_2}{R^2}.
				\ar@/_1.5pc/[lll]|{{\psi}_{2,-1}}
				\ar@/^1.5pc/[lll]|{{\psi}_{2,0}}
			}
		\end{equation*} 	Note that $\theta_1(\vdelta_{1_I})=\vdelta_{I}-\x_n$ and $\theta_2(\vdelta_{2_I})=\vdelta_{I}-(p_n-2)\x_n$.	  It further follows from Lemma \ref{L-action to U}(b) immediately that we have the equalities 
		\begin{align*}
			V_1&:={\psi}_{1,p_n-2}(T_1)=\bigoplus_{\s\le \vell \le \s+(\vdelta_{I}-\x_n)} \bigoplus_{0\le \x \le \vdelta-\vdelta_{I}} U^{\vell}(\x)[-\sigma(\x)],\\
			V_2&:={\psi}_{2,0}(T_2)=\bigoplus_{\s+(p_n-2)\x_n\le \vell \le \s+\vdelta_{I}} \bigoplus_{0\le \x \le \vdelta-\vdelta_{I}} U^{\vell}(\x)[-\sigma(\x)].
		\end{align*}
		Observe that $[\s,\s+\vdelta_{I}-\x_n]\cup [\s+(p_n-2)\x_n, \s+\vdelta_{I}]=[\s,\s+\vdelta_I]$. Thus $V=V_1\oplus V_2$.
		Following the second case of Lemma \ref{L-action to U}(a), we have 
		$$T_2':={\phi}_{1,p_n-2}{\psi}_{2,0}(T_2)=\bigoplus_{\s_1+(p_n-3)\x_{1,n}\le \vell \le \s_1+\vdelta_{1_I}} \bigoplus_{0\le \x \le \vdelta_1-\vdelta_{1_I}} U_1^{\vell}(\x)[-\sigma(\x)],$$
		which belongs to $\add T_1$.  
		The rigidity of $T_1$ implies that $\Hom_{\underline{\CM}^{\L_1} R^1}(T_2',T_1[m])=0$ for any $m\neq 0$, and thus $V$ is tilting  in $\underline{\CM}^{\L}{R}$ by Proposition \ref{gluing tilting}.
		
		\emph{Case $2$}: $n\notin I$.  In this case, $\vdelta_{1_I}=\sum_{i\in I}(p_{i}-2)\x_{1,i}$  and $\vdelta_{2_I}=\sum_{i\in I}(p_{i}-2)\x_{2,i}.$ 
		By Theorem \ref{ladder diagram}, we obtain a piece of ladder 
		\begin{equation*}
			\xymatrix{
				\underline{\CM}^{\L_1}{R^1}
				\ar[rrr]|{{\psi}_{1,1}}
				\ar@/_3pc/[rrr]|{{\psi}_{1,2}}
				&&& \underline{\CM}^{\L}{R}
				\ar@/_1.5pc/[lll]|{{\phi}_{1,1} }
				\ar@/^1.5pc/[lll]|{{\phi}_{1,2}}
				\ar[rrr]|{{\phi}_{2,3-p_n}}
				\ar@/_3pc/[rrr]|{{\phi}_{2,4-p_n}}
				&&& \underline{\CM}^{\L_2}{R^2}.
				\ar@/_1.5pc/[lll]|{{\psi}_{2,2-p_n}}
				\ar@/^1.5pc/[lll]|{{\psi}_{2,3-p_n}}
			}
		\end{equation*}% 		
		We claim that
		%Further, Lemma \ref{L-action to U} implies that		
		\begin{align}
		W_1&:={\psi}_{1,1}(T_1)=\bigoplus_{\s\le \vell \le \s+\vdelta_{I}} \bigoplus_{0\le \x \le (\vdelta-\x_n)-\vdelta_I } U^{\vell}(\x)[-\sigma(\x)],\label{first equ}\\
		W_2&:={\psi}_{2,3-p_n}(T_2)=\bigoplus_{\s\le \vell \le \s+\vdelta_{I}} \bigoplus_{(p_n-2)\x_n\le \x \le \vdelta-\vdelta_I } U^{\vell}(\x)[-\sigma(\x)].\label{sencond equ}
		\end{align}
		Note that $\theta_1(\vdelta_{1_I})=\vdelta_{I}=\theta_2(\vdelta_{2_I})$.  The equality (\ref{first equ}) follows  from Lemma \ref{L-action to U}(b). For the equality (\ref{sencond equ}), we need to consider the cases according to whether $p_n$ is odd or even. Assume $p_n=2k+1$ for some $k\ge 1$. By Lemma \ref{L-action to U}(b), 
		\begin{align*}
			W_2&=((2k-2)\x_n)\psi_{2,0}(\bigoplus_{\s_2\le \vell \le \s_2+\vdelta_{2_I}} \bigoplus_{0\le \x \le \vdelta_{2}-\vdelta_{2_I}} U_2^{\vell}(\x-(k-1)\c_2)[-\sigma(\x)])    \\
			&=\bigoplus_{\s\le \vell \le \s+\vdelta_{I}} \bigoplus_{0\le \x \le \theta_2(\vdelta_2)-\vdelta_I } U^{\vell+(p_n-2)\x_n}(\x+2(k-1)\x_n-(k-1)\c)[-\sigma(\x)].
		\end{align*}
		Notice that  $U^{\vell+(p_n-2)\x_n}=U^{\vell}(\x_n)[-1]$ and $U^{\vell}(\c)=U^{\vell}[2]$ hold by Corollary \ref{x_i=[1]}. 
		Replacing $\x$ by $\x-(2k-1)\x_n=\x-(p_n-2)\x_n$, we obtain the equality (\ref{sencond equ}).
		
		On the other hand, assume $p_n=2k+2$ for some $k\ge 1$. By Lemma \ref{L-action to U}(b),
		\begin{align*}
			W_2&=((2k-1)\x_n)\psi_{2,0}(\bigoplus_{\s_2\le \vell \le \s_2+\vdelta_{2_I}} \bigoplus_{0\le \x \le \vdelta_{2}-\vdelta_{2_I}} U_2^{\vell}(\x+\x_{2,n}-k\c_2)[-\sigma(\x)])    \\
			&=\bigoplus_{\s\le \vell \le \s+\vdelta_{I}} \bigoplus_{0\le \x \le \theta_2(\vdelta_2)-\vdelta_I } U^{\vell}(\x+2k\x_n-k\c)[-\sigma(\x)].
		\end{align*}
		%$$W_2=\bigoplus_{\s\le \vell \le \s+\vdelta_{I}} \bigoplus_{0\le \x \le \theta_2(\vdelta_2)-\vdelta_I } U^{\vell}(\x+2k\x_n-k\c)[-\sigma(\x)].$$ 
		Replace $\x$ by $\x+2k\x_n=\x+(p_n-2)\x_n$, and we obtain the equality (\ref{sencond equ}) as claimed.
						 
		Observe that $[0,\vdelta-\x_n-\vdelta_{I}]\cup [(p_n-2)\x_n, \vdelta-\vdelta_{I}]=[0,\vdelta-\vdelta_{I}]$. Thus $V=W_1\oplus W_2$.
		Set $T_1':={\phi}_{2,4-p_n}{{\psi}_{1,1}}(T_1)={\phi}_{2,4-p_n}W_1$. Let $\y \in \L$ with $0\le \y \le (\vdelta-\x_n)-\vdelta_I$. Write $\vec t:=\y+(4-p_n)\x_n=\sum_{i=1}^n t_i\x_i+t\c$ in normal form. Then $t_n$ is one of the following three cases: $t_n=0$, $t_n=1$ and $t_n\ge 4$. 		
		Assume first that $t_n=0$. Then  $T_1'=0$ from the first case of  Lemma \ref{L-action to U}(a). 		
		Assume next that $t_n\ge 4$. Then  $T_1'=0$ from the third case of  Lemma \ref{L-action to U}(a). 	
		Assume  finally  that $t_n=1$. This means that $(p_n-3)\x_n\le \y \le (\vdelta-\x_n)-\vdelta_I$, and thus $\x_n \le \vec t \le \theta_2(\vdelta_2)+\x_n-\vdelta_I$. In this case, 
			\begin{align*}
				W_1((4-p_n)\x_n)=\bigoplus_{\s\le \vell \le \s+\vdelta_{I}} \bigoplus_{\x_n \le \vec t \le \theta_2(\vdelta_2)+\x_n-\vdelta_I } U^{\vell}(\vec t)[-\sigma(\vec t)+(4-p_n)].
			\end{align*}
		Following the second case of Lemma \ref{L-action to U}(a), we have 
			$$ T_1'= \bigoplus_{\s_2\le \vell \le \s_2+\vdelta_{2_I}} \bigoplus_{\x_{2,n} \le \vec x \le \vdelta_2+\x_{2,n}-\vdelta_{2_I} } U_2^{\vell}(\vec x-(4-p_n)\x_{2,n})[-\sigma(\vec x)+(4-p_n)].$$
			Since $p_{2,n}=2$,  we have $U_2^{\vell}(\x_{2,n})=U_2^{\vell}[1]$ by Corollary \ref{x_i=[1]}. Thus $$ T_1'= \bigoplus_{\s_2\le \vell \le \s_2+\vdelta_{2_I}} \bigoplus_{\x_{2,n} \le \vec x \le \vdelta_2+\x_{2,n}-\vdelta_{2_I} } U_2^{\vell}(\vec x)[-\sigma(\vec x)].$$
			Replace $\vec x$ by $\vec x+\x_{2,n}$, and then we have $T_1'= T_2$. 			
			In each case, the rigidity of $T_2$ implies that $\Hom_{\underline{\CM}^{\L_2} R^2}(T_2,T_1'[m])=0$ for any $m\neq 0$, and thus $V$ is a tilting  object in $\underline{\CM}^{\L}{R}$ by Proposition \ref{gluing tilting}. This finishes the induction step.
		
		\emph{Step $2$}: Concerning the shape of $V$, by	Lemma \ref{Hom}, we have that
		$$\underline{\Hom}(U^{\vell}(\x)[-\sigma(\x)],U^{\z}(\y)[-\sigma(\y)])\neq 0 \ \Longleftrightarrow \ \vell\ge \z \text{ and } 0\le \x-\y \le \s.$$ Moreover, in the case, the $\mathbf{k}$-space has dimension one. 
		Hence the endomorphism algebra $\underline{\End}(V)^{\rm op}\simeq \mathbf{k}Q_1/I_1 \otimes \mathbf{k}Q_2/I_2$. Here, each vertex $\vell$ in $Q_1$ (resp. $\x$ in $Q_2$) corresponds to  $U^{\vell}$ for $\s\le \vell \le \s+\vdelta_I$ (resp. $\rho(\mathbf{k})(\x)[-\sigma(\x)]$ for $0\le \x \le \vdelta-\vdelta_I$), and each arrow $x_i:\vell \to \vell+\x_i$ with $i\in I$ and $\vell,\vell+\x_i\in Q_1$ (resp. $x_i: \x \to \x+\x_i$ with  $i\in I^{c}$ and $\x,\x+\x_i\in Q_2$) corresponds the basis of one dimensional space $\underline{\Hom}(U^{\vell+\x_i},U^{\vell})$ (resp. $\underline{\Hom}(\rho(\mathbf{k})(\x+\x_i)[-\sigma(\x+\x_i)],\rho(\mathbf{k})(\x)[-\sigma(\x)])$), and the ideal $I_1:=(x_ix_j-x_jx_i\mid i,j \in I)$ (resp. $I_2:=(x_i^2, x_ix_j-x_jx_i\mid i,j \in I^{c})$).		
		Thus we have the isomorphism (\ref{shape of V}) from Lemma \ref{iso. algs} and Proposition \ref{shape theorem}.  
		
		\emph{Step $3$}: Concerning the last claim,		it suffices to refine the order of the index set $\{(\vell,\x)\mid \s\le \vell \le \s+\vdelta_I, 0\le \x \le \vdelta-\vdelta_I\}$ for indecomposable direct summands of the tilting object $V$ to a linear order, then yielding a full, strong exceptional sequence $E_1, E_2,\dots,E_m$ for $\underline{\CM}^{\L} R$, where $m:=\prod_{i=1}^{n}(p_i-1)$. 		
	\end{proof}

	As one extreme case $I=\{1,\dots,n\}$ of Theorem \ref{tilting object},  this immediately induces the following result,  
	which is known by Kussin-Lenzing-Meltzer \cite{KLM} for $n=3$, by Futaki-Ueda \cite{FU} and Herschend-Iyama-Minamoto-Oppermann \cite{HIMO} for general $n$. 
	
	\begin{corollary}\label{tilting n-cub}
		\cite[Theorem 4.76]{HIMO}  The object	$$T_{\rm cub}:=\bigoplus_{\s\leq\vell\leq\s+\vdelta}U^{\vell} $$ is   tilting  in $\underline{\CM}^{\L} R$, called the \emph{tilting $n$-cuboid}, with endomorphism algebra  $$\underline{\End}(T_{\rm cub})^{\rm op}\simeq 
		\bigotimes_{1\le i \le n} \mathbf{k}\vec{\AA}_{p_i-1}.$$
		Moreover, the objects $U^{\vell}$ $(\s\leq\vell\leq\s+\vdelta)$	can be arranged to form a full, strong exceptional sequence.		
	\end{corollary}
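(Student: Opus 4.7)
The plan is to obtain this result as an immediate specialization of Theorem \ref{tilting object} to the extreme case $I = \{1,\dots,n\}$, so almost no new work is needed once one correctly unpacks the notation.

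First I would observe that with $I = \{1,\dots,n\}$ we have $\vdelta_I = \sum_{i=1}^{n}(p_i-2)\x_i = \vdelta$, hence $\vdelta - \vdelta_I = 0$, and moreover $I^{c} = \emptyset$. Substituting these into the formula
\[
V_I = \bigoplus_{\s\leq\vell\leq\s+\vdelta_I}\bigoplus_{0\leq\x\leq\vdelta-\vdelta_{I}}U^{\vell}(\x)[-\sigma(\x)]
\]
the inner direct sum collapses to the single term $\x = 0$ with $\sigma(0) = 0$, giving $V_I = \bigoplus_{\s\leq\vell\leq\s+\vdelta}U^{\vell} = T_{\rm cub}$. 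Theorem \ref{tilting object} then asserts directly that this object is tilting in $\underline{\CM}^{\L} R$.

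Next I would read off the endomorphism algebra from the same specialization: the formula $\underline{\End}(V)^{\rm op}\simeq \bigotimes_{i\in I}\mathbf{k}\vec{\AA}_{p_i-1}\otimes\bigotimes_{i\in I^{c}}\mathbf{k}\vec{\AA}_{p_i-1}(2)$ reduces, since $I^{c}=\emptyset$, to $\bigotimes_{1\le i\le n}\mathbf{k}\vec{\AA}_{p_i-1}$, matching the claim. Finally, the exceptional sequence assertion transfers verbatim: Theorem \ref{tilting object} provides a full, strong exceptional sequence from the summands of $V_I$, and in our case these are exactly the $U^{\vell}$ with $\s \le \vell \le \s+\vdelta$, so any linear refinement of the partial order on $[\s,\s+\vdelta]$ compatible with the Hom-vanishing in Lemma \ref{Hom} (for example, the lexicographic order on coordinates) produces the desired sequence.

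Since the corollary is a direct consequence, there is no genuine obstacle here; the only thing worth double-checking is that the bookkeeping of indices behaves correctly in the degenerate case $\x = 0$, in particular that the Hom-formula of Lemma \ref{Hom} specializes to $\underline{\Hom}(U^{\vell},U^{\z}) = \mathbf{k}$ if $\vell \ge \z$ and $0$ otherwise, which is precisely what yields the tensor product of path algebras $\bigotimes_{i=1}^{n}\mathbf{k}\vec{\AA}_{p_i-1}$ via Proposition \ref{shape theorem} with $\mathbf{q} = (p_1-1,\dots,p_n-1)$.
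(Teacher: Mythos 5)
Your proposal is correct and is exactly how the paper obtains this corollary: it is stated as the immediate specialization of Theorem \ref{tilting object} to the extreme case $I=\{1,\dots,n\}$, where $\vdelta_I=\vdelta$ forces the inner sum to collapse to $\x=0$ and $I^{c}=\emptyset$ removes the $\mathbf{k}\vec{\AA}_{p_i-1}(2)$ factors. No further comment is needed.
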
			
	
	\begin{corollary} The endomorphism algebra $\underline{\End}(T_{\rm cub})^{\rm op}$ is isomorphic to the 
		incidence algebra of the poset $ [0,p_1-2]\times[0,p_2-2]\times\dots\times[0,p_n-2]$.
		
	\end{corollary}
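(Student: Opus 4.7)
The plan is to reduce the claim to two standard facts about incidence algebras: (i) the incidence algebra of a finite linearly ordered poset $[0,m]$ is isomorphic to the path algebra $\mathbf{k}\vec{\AA}_{m+1}$ of the equioriented quiver of type $\AA_{m+1}$; (ii) the incidence algebra of a product $P_1\times P_2\times\cdots\times P_n$ of finite posets is isomorphic to the $n$-fold tensor product $\bigotimes_{i=1}^n I(P_i)$ of the incidence algebras $I(P_i)$.

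For (i), I would set up an explicit isomorphism sending the basis element $e_{a,b}\in I([0,m])$ (with $a\le b$) to the path $p_{a,b}$ in $\mathbf{k}\vec{\AA}_{m+1}$ from vertex $a$ to vertex $b$. One checks directly that this respects products: $e_{a,b}\cdot e_{b,c}=e_{a,c}$ corresponds to concatenation of paths, while $e_{a,b}\cdot e_{b',c}=0$ when $b\ne b'$ matches the fact that non-composable paths are zero.

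For (ii), I would use the description of the incidence algebra of $\prod_i P_i$ with basis $\{e_{\vec{x},\vec{y}}\mid \vec{x}\le \vec{y}\}$, and send $e_{\vec{x},\vec{y}}$ to $e_{x_1,y_1}\otimes\cdots\otimes e_{x_n,y_n}$. The product structure on the tensor product is componentwise, and the product structure of the incidence algebra of the product poset is also componentwise (both sides vanish unless the intermediate components agree). This yields a $\mathbf{k}$-algebra isomorphism $I\bigl(\prod_i P_i\bigr)\simeq \bigotimes_i I(P_i)$.

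Combining (i) and (ii), the incidence algebra of $[0,p_1-2]\times\cdots\times[0,p_n-2]$ is isomorphic to $\bigotimes_{i=1}^n \mathbf{k}\vec{\AA}_{p_i-1}$, which equals $\underline{\End}(T_{\rm cub})^{\rm op}$ by Corollary \ref{tilting n-cub}. No step is truly hard: the two isomorphisms are routine verifications, and the main (mild) bookkeeping challenge is just matching the normalization conventions so that arrows $\vec{x}\to \vec{x}+\x_i$ in the quiver presentation from Proposition \ref{shape theorem} (applied with $\mathbf{q}=(p_1-1,\ldots,p_n-1)$) correspond correctly to the covering relations of the product poset.
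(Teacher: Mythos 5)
Your proposal is correct and matches the route the paper intends: the corollary is stated as an immediate consequence of Corollary \ref{tilting n-cub}, and your two standard facts (incidence algebra of a chain $[0,p_i-2]$ is $\mathbf{k}\vec{\AA}_{p_i-1}$; incidence algebra of a product of finite posets is the tensor product of incidence algebras) supply exactly the missing identification of $\bigotimes_{i}\mathbf{k}\vec{\AA}_{p_i-1}$ with the incidence algebra of $\prod_i[0,p_i-2]$. The same conclusion can alternatively be read off from Proposition \ref{shape theorem} with $\mathbf{q}=(p_1-1,\dots,p_n-1)$, where the relations $x_i^{p_i-1}=0$ are vacuous on $[0,\vdelta]$ and only the commutativity relations remain, but this is just a presentation-level restatement of your argument.
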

	
	By using a similar argument as in the proof of Lemma \ref{Hom}, one can check easily that $\rho(\mathbf{k})(\x)$ $(0\le \x \le \vdelta)$ can be arranged to form a full exceptional sequence. However this is not strong in general. Thus it is natural to pose the following question, which is a higher version of
	the question posed by Kussin-Lenzing-Meltzer \cite[Remark 6.10]{KLM} in terms of Cohen-Macaulay modules.
	
	\begin{question} \label{question}
		Does there exist a full, strong exceptional sequence in $\underline{\CM}^{\L} R$ from the objects $\rho(\mathbf{k})(\x)$, $\x\in\L$.
	\end{question}
	
	There are some partial answers of Question \ref{question}, which are given by \cite{KLM,KLM2} for the type $(2,3,n)$ with $n \ge 2$, and by \cite{DR} for the type $(2,a,b)$ with $a,b\ge2$. 			
	As the other extreme case $I=\emptyset$ of Theorem \ref{tilting object}, we have the following result, where one of the implications gives a positive answer to Question \ref{question} for type $(2,p_2,\dots,p_n)$ with $p_i\ge 2$. In fact, we note that $p_1=2$ imples $\rho(\mathbf{k})[1]= \rho(\mathbf{k})(\x_1)$. 
	
	\begin{corollary} \label{tilting n-cub  2} The object
		$$T:=\bigoplus_{0\leq\x\leq\vdelta}\rho(\mathbf{k})(\x)[-\sigma(\x)]$$
		is   tilting  in $\underline{\CM}^{\L} R$ with endomorphism algebra $$\underline{\End}(T)^{\rm op}\simeq \bigotimes_{1\le i \le n} \mathbf{k}\vec{\AA}_{p_i-1}(2). $$ 
		Moreover, the objects $\rho(\mathbf{k})(\x)[-\sigma(\x)]$ $(0\le \x \le \vdelta)$	can be arranged to form a full, strong exceptional sequence.		
	\end{corollary}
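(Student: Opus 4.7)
The plan is to obtain this corollary as the immediate specialization of Theorem \ref{tilting object} to the case $I = \emptyset$, so the main task is really to verify that all the pieces match up under this choice. First I would record that $\vdelta_\emptyset = 0$ and $I^c = \{1,\dots,n\}$, so the double sum defining $V_I$ collapses to a single outer index $\vell = \s$ (forced by $\s \leq \vell \leq \s + \vdelta_\emptyset = \s$), while the inner index runs over all $\x$ with $0 \leq \x \leq \vdelta$. Next, I would identify $U^{\s}$ with $\rho(\mathbf{k})$: by the definition in (\ref{important CM module}), $E^{\s} = R/(X_i \mid 1 \leq i \leq n) = \mathbf{k}$, hence $U^{\s} = \rho(\mathbf{k})$. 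Substituting this into the formula for $V_\emptyset$ gives exactly
\[
V_\emptyset \;=\; \bigoplus_{0 \leq \x \leq \vdelta} \rho(\mathbf{k})(\x)[-\sigma(\x)] \;=\; T.
\]

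The second step is to read off the endomorphism algebra. Theorem \ref{tilting object} asserts
\[
\underline{\End}(V_I)^{\rm op} \;\simeq\; \bigotimes_{i \in I} \mathbf{k}\vec{\AA}_{p_i-1} \,\otimes\, \bigotimes_{i \in I^c} \mathbf{k}\vec{\AA}_{p_i-1}(2),
\]
so with $I = \emptyset$ the first tensor factor is trivial and the second ranges over all $i$, giving the claimed identification
\[
\underline{\End}(T)^{\rm op} \;\simeq\; \bigotimes_{1 \leq i \leq n} \mathbf{k}\vec{\AA}_{p_i-1}(2).
\]
The final exceptional-sequence assertion is likewise inherited: Theorem \ref{tilting object} already provides a linear refinement of the indexing set $\{(\vell,\x)\}$ yielding a full, strong exceptional sequence, and this refinement restricts to a linear order on $\{(\s,\x) \mid 0 \leq \x \leq \vdelta\}$ that arranges the summands $\rho(\mathbf{k})(\x)[-\sigma(\x)]$ into a full, strong exceptional sequence in $\underline{\CM}^{\L} R$.

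Since every clause of the corollary is an instance of a clause already proved in Theorem \ref{tilting object}, there is no serious obstacle; the only thing to be careful about is bookkeeping, in particular checking the identification $U^{\s} = \rho(\mathbf{k})$ and confirming that the degenerate range $\vell = \s$ is consistent with the conventions used in the statement and proof of Theorem \ref{tilting object}.
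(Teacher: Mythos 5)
Your proposal is correct and is exactly how the paper obtains this result: Corollary \ref{tilting n-cub  2} is stated as the extreme case $I=\emptyset$ of Theorem \ref{tilting object}, and your bookkeeping (collapsing the outer sum to $\vell=\s$, identifying $U^{\s}=\rho(\mathbf{k})$, and specializing the tensor-product formula and the exceptional-sequence claim) fills in precisely the routine verifications the paper leaves implicit.
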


	\section{Happel-Seidel symmetry} \label{sec: Happel-Seidel symmetry}
	Happel-Seidel symmetry, established in \cite{HS},  concerns the 
	derived equivalence of certain homogeneous acyclic Nakayama algebras. 
	A general and simple explanation of this symmetry was exhibited through the stable categories of vector bundles on  weighted projective lines in \cite{KLM}. In this section, we  generalize  this symmetry to the context of Brieskorn–Pham singularities. More precisely, we show that all these $(p_t-2)$-replicated algebras of $\bigotimes_{i\in S\setminus\{t\}}\mathbf{k}\vec{\AA}_{p_i-1}$ for $t\in S=\{1\dots,n\}$, are realized
	as endomorphism algebras of tilting objects on $\underline{\CM}^{\L} R$ of type $(p_1,\dots,p_n)$.
	%we show $n$-fold tensor product of certain Nakayama algebras are derived equivalent to some replicated algebra of $(n-1)$-fold tensor product of such Nakayama algebras

	 %Recall that the \emph{trivial extension algebra} $T(\Lambda)$ of a finite dimensional algebra $\Lambda$, is defined as  vector space$T(\Lambda)=\Lambda \oplus D(\Lambda)$  with multiplication given by \[ (a,x)(b,y)=(ab,ay+xb)\]for $a,b \in \Lambda$, $x,y \in D(\Lambda)$. We can regard $T(\Lambda)$ as a $\Z$-graded algebra by setting$T(\Lambda)_0=\Lambda$ and $T(\Lambda)_1=D\Lambda$.
	Recall that for a finite dimensional algebra $\Lambda$, there exists an infinite dimensional locally 
	bounded algebra called \emph{repetitive algebra} $\widehat{\Lambda}$. The underlying vector space is given by $(\bigoplus_{i\in\Z}\Lambda)\oplus(\bigoplus_{i\in\Z}D\Lambda)$, with elements of the form $(a_i,f_i)_{i\in\Z}$ with finitely many $a_i$ and $f_i$ being nonzero.
	The multiplication is defined by $(a_i,f_i)_i(b_i,g_i)_i=(a_ib_i,a_{i+1}g_i+f_ib_i)_i$.
	The repetitive algebra $\widehat{\Lambda}$ of $\Lambda$ can interpreted as a $\Z\times\Z$ matrix algebra of the form
	\[\widehat{\Lambda}=\begin{pmatrix}
		\ddots & & & & & \\
		\ddots & \Lambda & & & \\
		& D\Lambda & \Lambda & &\\
		& & D\Lambda &\Lambda & \\
		& & & \ddots & \ddots 
	\end{pmatrix},
	\]
	see \cite{Hap1,S} for more details. For $m\geq 0$, the \emph{$m$-replicated algebra} $\Lambda^{(m)}$ of $\Lambda$ is the idempotent truncation $e^{(m)}\widehat{\Lambda}e^{(m)}$, where $e^{(m)}$ is the idempotent of $\widehat{\Lambda}$ given by the $\Z\times\Z$ matrix with $(i,i)$-th entry $1$ for all $i\in\{0,1,\ldots,m\}$ and zero everywhere else. In other words, we have 
				\begin{align*}
					\Lambda^{(m)}:=e^{(m)} \widehat{\Lambda}e^{(m)} =
						\begin{pmatrix} 
											\Lambda & & & & & \\
											D\Lambda & \Lambda & & & \\
											& D\Lambda & \ddots & &\\
											& & \ddots &\ddots & \\
											& & & D\Lambda & \Lambda 
										\end{pmatrix}
					\end{align*}
				where the matrix is of size $(m+1)\times (m+1)$. For $m=0$, we have ${\Lambda}^{(0)}=\Lambda	$. For $m=1$, ${\Lambda}^{(1)}$ is called the \emph{duplicated algebra}.	
	We refer to \cite{ABST1,ABST2, AI,CIM} for more information on $m$-replicated algebras.
		
	Let $R$ be an $\L$-graded BP singularity with weights $p_1,\ldots, p_n$. Let $t\in S:=\{1,\dots,n\}$ and $\Gamma^t:=\widehat{\Lambda_t}^{(p_t-2)}$, where $\Lambda_t:=\bigotimes_{i\in S\setminus\{t\}}\mathbf{k}\vec{\AA}_{p_i-1}$.

	\begin{lemma} \label{quiver of Gamma}
		The algebra $\Gamma^t$ is presented
		by the quiver $Q$ defined by 
			\begin{itemize}
			\item  $Q_0:=[\s+(p_t-2)\x_t,\s+\vdelta]\times [0,p_t-2]$,
			\item  $Q_1:=\{x_k:(\x,i)\to (\x+\x_k,i) \mid (\x,i),(\x+\x_k,i)\in Q_0\} $\\			
		 	$\cup\ \{ \prod_{k\in S\setminus\{t\}}x_k: (\s+\vdelta,i) \to (\s+(p_t-2)\x_t,i+1) \mid 0 \le i \le p_t-3 \},$		
		\end{itemize} subject to the relation
			\begin{itemize}
			\item $x_jx_k=x_kx_j$ and  $x_k^{p_k}=0$, where $j,k\in S\setminus\{t\}$.
			\end{itemize}
	\end{lemma}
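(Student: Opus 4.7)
My plan is to compute the Gabriel quiver of $\Gamma^t=\widehat{\Lambda_t}^{(p_t-2)}$ directly from its matrix description as a $(p_t-1)\times(p_t-1)$ lower-triangular block algebra with $\Lambda_t$ on the diagonal and $D\Lambda_t$ on the subdiagonal. Its primitive idempotents are $e_{(\x,i)}$ for $\x$ a vertex of $\Lambda_t$ and $i\in[0,p_t-2]$, matching the proposed vertex set $Q_0$. The quiver of $\Lambda_t=\bigotimes_{k\neq t}\mathbf{k}\vec{\AA}_{p_k-1}$ itself is obtained by the same tensor-product reasoning as in Lemma~\ref{iso. algs} and Proposition~\ref{shape theorem} (now applied to $n-1$ factors): its vertices are identified with $[\s+(p_t-2)\x_t,\s+\vdelta]$, and its arrows are $x_k:\x\to\x+\x_k$ for $k\neq t$, subject to commutativity $x_jx_k=x_kx_j$. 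This produces all arrows of $Q_1$ inside a single copy.

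The central step is the bimodule analysis of $D\Lambda_t$. Since each factor $\mathbf{k}\vec{\AA}_{p_k-1}$ is linear with unique source and sink, $\Lambda_t$ inherits a unique source $m_t:=\s+(p_t-2)\x_t$ and a unique sink $M_t:=\s+\vdelta$. Using $D\Lambda_t\cong\bigotimes_{k\neq t}D(\mathbf{k}\vec{\AA}_{p_k-1})$ as $\Lambda_t$-bimodules, together with the fact that each $D(\mathbf{k}\vec{\AA}_{p_k-1})$ has a one-dimensional bimodule top at the (source, sink)-position, the bimodule top of $D\Lambda_t$ is one-dimensional at $(M_t,m_t)$. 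This gives exactly one new arrow in each subdiagonal block, from $(M_t,i)$ to $(m_t,i+1)$ for $0\le i\le p_t-3$, which I would label $\prod_{k\neq t}x_k$ so that it formally behaves as a composite of one $x_k$-step for each $k\neq t$.

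For the relation $x_k^{p_k}=0$, I use this interpretation of the connecting arrow. A composition from $(\x,i)$ to $(\y,i+1)$ crossing a single connecting arrow factors uniquely (modulo commutativity) as an ascent from $\x$ to $M_t$ inside copy~$i$, then the connecting arrow, then an ascent from $m_t$ to $\y$ inside copy~$i+1$, giving total $x_k$-degree $p_k-1+l_k(\y)-l_k(\x)$. On the other hand, $e_\y D\Lambda_t e_\x\cong D(e_\x\Lambda_t e_\y)$ is one-dimensional precisely when $\y\le\x$ componentwise (i.e.\ $l_k(\y)\le l_k(\x)$ for every $k\neq t$), equivalently when the total $x_k$-degree does not exceed $p_k-1$. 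Hence any composition with $x_k$-degree $\ge p_k$ must vanish, which is exactly the relation $x_k^{p_k}=0$. I would close the proof with a dimension count: $\dim\mathbf{k}Q/I=(p_t-1)\dim\Lambda_t+(p_t-2)\dim D\Lambda_t=\dim\Gamma^t$, so the canonical surjection $\mathbf{k}Q/I\twoheadrightarrow\Gamma^t$ is an isomorphism. The main obstacle will be the careful bimodule analysis identifying the single top generator of $D\Lambda_t$ with the connecting arrow and translating the bimodule constraint into the nilpotency relation, in particular keeping track of the opposite orderings ($\x\le\y$ for within-copy paths versus $\y\le\x$ for across-copy paths).
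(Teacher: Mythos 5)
Your argument is correct, and it is genuinely more self-contained than the paper's: the paper disposes of this lemma in two lines by citing Schr\"oer's general description of the quiver with relations of a repetitive algebra $\widehat{\Lambda_t}$ and then truncating by the idempotent $e^{(p_t-2)}$, whereas you recompute the Gabriel quiver of $\Gamma^t$ directly from its lower-triangular matrix form. The facts you verify by hand --- that $\Lambda_t=\bigotimes_{k\neq t}\mathbf{k}\vec{\AA}_{p_k-1}$ has a unique source $\s+(p_t-2)\x_t$ and unique sink $\s+\vdelta$, that the bimodule top of $D\Lambda_t$ is one-dimensional and concentrated at the (sink, source) position (via $\operatorname{top}(M\otimes N)=\operatorname{top}(M)\otimes\operatorname{top}(N)$), and that $e_{\y}(D\Lambda_t)e_{\x}\simeq D(e_{\x}\Lambda_t e_{\y})$ translates the vanishing pattern of the subdiagonal blocks into the nilpotency relations --- are exactly the inputs that Schr\"oer's theorem packages in general, so the two proofs rest on the same structural content; yours buys independence from the citation at the cost of the explicit bookkeeping, and it has the side benefit of making transparent why paths crossing two connecting arrows die (their $x_k$-degree is $2p_k-2-l_k(\x)+l_k(\y)\ge p_k$), matching the vanishing of the $(i+2,i)$-entries of the matrix algebra. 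Two small points of care, neither a gap: the relation $x_k^{p_k}=0$ must be read in the commutative-monomial sense (no literal path $x_k^{p_k}$ exists in $Q$, since the only arrow leaving a copy is the single connecting arrow at the sink), which is indeed how you interpret it; and for the final dimension count what is strictly needed is the inequality $\dim\mathbf{k}Q/I\le(p_t-1)\dim\Lambda_t+(p_t-2)\dim D\Lambda_t$, obtained by straightening every path modulo commutativity into the canonical ascent--connect--ascent form, combined with surjectivity onto $\Gamma^t$.
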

	\begin{proof} %This follows  immediately from the main result of \cite{S}.
		The quiver description of $\widehat{\Lambda_t}$ follows from the main result of \cite{S}. As an idempotent truncation,  we obtain the quiver description of $\Gamma^t$.
	\end{proof}
	\begin{example} (1) Assume $n=2$. Let $R$ be given by weight type $(p_1,p_2)$. Put $m:=(p_1-1)(p_2-1)$. 
		Then 
		$ \Gamma^1 \simeq \vec{\AA}_{m}(p_2)$ and $\Gamma^2\simeq\vec{\AA}_{m}(p_1)$.
	
	(2)	Assume $n=3$. Let $R$ be given by weight type $(p_1,p_2,p_3)=(3,4,5)$. Then the quivers of  $\Gamma^1$, $\Gamma^2$ and $\Gamma^3$ are the following, respectively.
	\begin{figure}[H]
		\centering %\vspace{1mm}
		%\vspace{1mm}
		$\xymatrix@!R=2mm@!C=2mm{
			\bullet\ar[r]\ar[d]&\bullet\ar[r]\ar[d]&\bullet\ar[r]\ar[d]&\bullet\ar[d] & \bullet\ar[r]\ar[d]&\bullet\ar[r]\ar[d]&\bullet\ar[r]\ar[d]&\bullet\ar[d]\\
			\bullet\ar[r]\ar[d]&\bullet\ar[r]\ar[d]&\bullet\ar[r]\ar[d]&\bullet\ar[d]& \bullet\ar[r]\ar[d]&\bullet\ar[r]\ar[d]&\bullet\ar[r]\ar[d]&\bullet\ar[d] \\
			\bullet\ar[r]&\bullet\ar[r]&\bullet\ar[r]&\bullet\ar[ruu] & 
			\bullet\ar[r]&\bullet\ar[r]&\bullet\ar[r]&\bullet
		} $
		
		$$\xymatrix@!R=2mm@!C=2mm{
				\bullet\ar[r]\ar[d]&\bullet\ar[r]\ar[d]&\bullet\ar[r]\ar[d]&\bullet\ar[d] & \bullet\ar[r]\ar[d]&\bullet\ar[r]\ar[d]&\bullet\ar[r]\ar[d]&\bullet\ar[d] &
				\bullet\ar[r]\ar[d]&\bullet\ar[r]\ar[d]&\bullet\ar[r]\ar[d]&\bullet\ar[d]\\
				\bullet\ar[r]&\bullet\ar[r]&\bullet\ar[r]&\bullet\ar[ru] &
				\bullet\ar[r]&\bullet\ar[r]&\bullet\ar[r]&\bullet\ar[ru]  & 
				\bullet\ar[r]&\bullet\ar[r]&\bullet\ar[r]&\bullet \\
			} $$
		
		$$\xymatrix@!R=2mm@!C=2mm{
			\bullet\ar[r]\ar[d]&\bullet\ar[r]\ar[d]&\bullet\ar[d] & \bullet\ar[r]\ar[d]&\bullet\ar[r]\ar[d]&\bullet\ar[d]
			& \bullet\ar[r]\ar[d]&\bullet\ar[r]\ar[d]&\bullet\ar[d] & \bullet\ar[r]\ar[d]&\bullet\ar[r]\ar[d]&\bullet\ar[d]			\\
			\bullet\ar[r]&\bullet\ar[r]&\bullet\ar[ru]&\bullet\ar[r]&\bullet\ar[r]&\bullet\ar[ru] &
			\bullet\ar[r]&\bullet\ar[r]&\bullet\ar[ru]&\bullet\ar[r]&\bullet\ar[r]&\bullet
			} $$
		\caption{The quivers of $\Gamma^1$, $\Gamma^2$ and $\Gamma^3$ for $R(3,4,5)$}
	\end{figure}
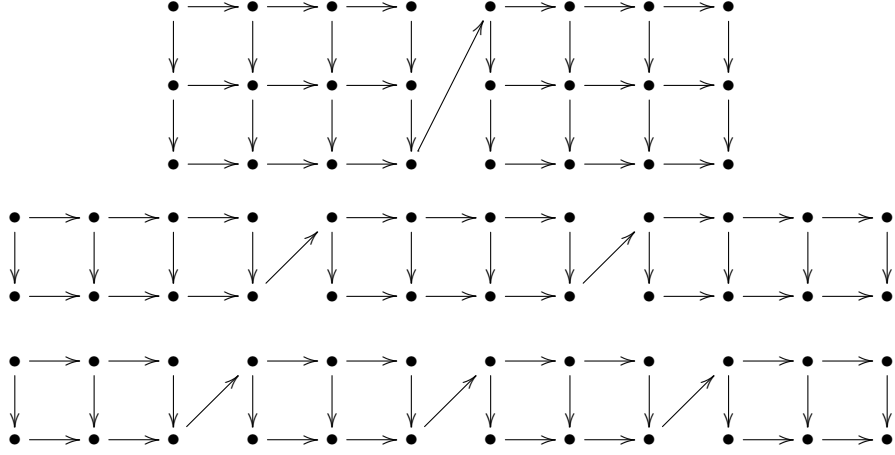	
	\end{example}

		Our main result in this section is the following. %which generalizes \cite[Proposition 6.9]{KLM} for the case $n=2$. 
		Here,  note that $\mathbb{S}:=(\w)[n-2]=(-\s)[n]$ is the Serre functor on $\underline{\CM}^{\L} R$ by Theorem \ref{Auslander-Reiten-Serre duality}.
		
		\begin{theorem}	\label{replicated thm}The object
		$$T^t:=\bigoplus_{\s+(p_t-2)\x_t\le \vell \le \s+\vdelta}\bigoplus_{0\le i \le p_t-2} U^{\vell}(-i\s)[in]$$
		is  tilting in $\underline{\CM}^{\L} R$ with endomorphism algebra
		$$\underline{\End}(T^t)^{\rm op}\simeq \Gamma^t.$$
	\end{theorem}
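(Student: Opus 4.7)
The plan is to organize $T^t = \bigoplus_{i=0}^{p_t-2} \mathcal{L}_i$ into $p_t-1$ layers, where $\mathcal{L}_0 := \bigoplus_{\vell \in J} U^{\vell}$ for $J := [\s+(p_t-2)\x_t, \s+\vdelta]$ and $\mathcal{L}_i := \mathbb{S}^i \mathcal{L}_0$, with $\mathbb{S} := (\w)[n-2] = (-\s)[n]$ the Serre functor from Theorem \ref{Auslander-Reiten-Serre duality}. The identity $\mathbb{S}^2 U^{\vell} = U^{\vell}(\vdelta)$ from Proposition \ref{c=[2]} makes the interaction between layers tractable. One then computes $\underline{\Hom}(\mathcal{L}_i, \mathcal{L}_j[m])$ for all $i, j, m$ in three cases.

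\emph{Within a single layer.} Since $\mathcal{L}_0$ is the summand of the tilting $n$-cuboid $T_{\rm cub}$ obtained by restricting $\vell$ to $t$-coordinate $p_t-1$, idempotent truncation of $\bigotimes_i \mathbf{k}\vec{\AA}_{p_i-1}$ at the simple projective of the $t$-factor gives $\underline{\End}(\mathcal{L}_0)^{\rm op} \simeq \Lambda_t$, and rigidity of $T_{\rm cub}$ (Corollary \ref{tilting n-cub}) forces $\underline{\Hom}(\mathcal{L}_0, \mathcal{L}_0[m]) = 0$ for $m \ne 0$; each layer $\mathcal{L}_i$ inherits this via the autoequivalence $\mathbb{S}^i$. \emph{Between adjacent layers.} Serre duality gives $\underline{\Hom}(\mathcal{L}_i, \mathcal{L}_{i+1}[m]) \simeq D\, \underline{\Hom}(\mathcal{L}_0, \mathcal{L}_0[-m])$, which is $D\Lambda_t$ as a $\Lambda_t$-bimodule for $m = 0$ and zero otherwise.

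\emph{Between non-adjacent layers.} The decisive vanishing $\underline{\Hom}(\mathcal{L}_0, \mathbb{S}^k \mathcal{L}_0[m]) = 0$ for $|k| \ge 2$ and all $m$ is the core of the argument. I would transport the question through the derived equivalence $\underline{\CM}^{\L} R \simeq \DDD^{\bo}(A)$ induced by $T_{\rm cub}$, where $A := \bigotimes_i \mathbf{k}\vec{\AA}_{p_i-1}$ and $\mathbb{S}$ corresponds to the Serre functor $\nu_A := -\otimes^{\mathbf{L}}_A DA$. Writing $A = \Lambda_t \otimes B$ with $B := \mathbf{k}\vec{\AA}_{p_t-1}$, each summand of $\mathcal{L}_0$ takes the form $P_v^{\Lambda_t} \otimes P_{p_t-2}^B$, so the K\"unneth formula splits the Hom-space into a sum of tensor products, one factor being $\Hom_B(P_{p_t-2}, \nu_B^k P_{p_t-2}[q])$. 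An explicit AR-quiver computation in Dynkin type $A_{p_t-1}$ shows $\nu_B^k P_{p_t-2} \simeq S_{k-2}[k-2]$ for $2 \le k \le p_t-1$; this factor therefore equals $\Ext^{k-2+q}(S_{p_t-2}, S_{k-2})$, which vanishes since $P_{p_t-2} = S_{p_t-2}$ is simple projective with $k-2 \ne p_t-2$. The case $k \le -1$ reduces to $k \ge 2$ via Serre duality.

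Combining the three cases, $T^t$ is rigid and $\underline{\End}(T^t)^{\rm op}$ has precisely the block structure of $\widehat{\Lambda_t}^{(p_t-2)}$ (diagonal blocks $\Lambda_t$, sub-diagonal blocks $D\Lambda_t$ with the correct bimodule action, zeros elsewhere), matching $\Gamma^t$ by Lemma \ref{quiver of Gamma}. Since $\Gamma^t$ has finite global dimension (as the replicated algebra of a tensor product of hereditary algebras) and $T^t$ has $\prod_i(p_i-1) = \rk K_0(\underline{\CM}^{\L} R)$ indecomposable summands, a standard derived-equivalence argument ($\mathbf{R}\underline{\Hom}(T^t, -)$ embeds $\thick T^t$ into $\DDD^{\bo}(\Gamma^t)$, after which a $K_0$-count forces equality) upgrades rigidity to tilting. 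The main obstacle is the non-adjacent-layer vanishing: once $\nu_B^k P_{p_t-2}$ is pinned down explicitly in the Dynkin case, the remaining task is to verify that the K\"unneth assembly compatibly realises the $\Lambda_t$-bimodule action on $D\Lambda_t$ consistently with the quiver relations of Lemma \ref{quiver of Gamma}.
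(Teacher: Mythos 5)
Your route is genuinely different from the paper's. The paper proves the theorem by a double induction (on $n$, then on $p_n$), gluing the tilting objects $T_1,T_2$ of the smaller weight types through the Serre--twisted recollement $({\psi}_{1,p_n-2}\mathbb{S},\dots)$ and invoking Proposition \ref{gluing tilting}; that single step delivers rigidity \emph{and} generation simultaneously, and the endomorphism algebra is then read off from Lemma \ref{Hom} plus Serre duality. Your layer decomposition $T^t=\bigoplus_i\mathbb{S}^i\mathcal{L}_0$, with the within-layer/adjacent-layer/non-adjacent-layer trichotomy, is a sound and arguably more transparent way to get rigidity and the block shape of $\underline{\End}(T^t)^{\rm op}$: the transport through $\underline{\CM}^{\L}R\simeq\DDD^{\bo}(A)$, the identification $\nu_{A}=\nu_{\Lambda_t}\otimes\nu_B$, the K\"unneth splitting, and the computation $\nu_B^kP_{p_t-2}\simeq S_{k-2}[k-2]$ for $2\le k\le p_t-1$ are all correct, and the $\Lambda_t$-bimodule identification of the sub-diagonal blocks with $D\Lambda_t$ is automatic from the naturality of Serre duality. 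This buys a computation that never leaves Dynkin type $A$, at the price of having to handle generation separately.

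And that is where the gap is: your argument that $\thick T^t=\underline{\CM}^{\L}R$ does not work. A $K_0$-count cannot force a thick subcategory to be everything: from the exact sequence $K_0(\thick T^t)\to K_0(\underline{\CM}^{\L}R)\to K_0(\underline{\CM}^{\L}R/\thick T^t)\to 0$, surjectivity of the first map only kills $K_0$ of the Verdier quotient, and a nonzero triangulated category can have trivial $K_0$; equality of ranks proves nothing. The worry is concrete here: by Proposition \ref{c=[2]}, $\mathbb{S}$ sends the slice $\ell_t=p_t-1$ of the $n$-cuboid to (a degree shift of) the slice $\ell_t=1$ and $\mathbb{S}^2$ returns to $\ell_t=p_t-1$, so your layers $\mathbb{S}^i\mathcal{L}_0$ only ever touch the two extreme slices and never meet $U^{\vell}$ with $1<\ell_t<p_t-1$ directly. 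Generation therefore needs an actual argument. It can be supplied inside your framework: under the equivalence with $\DDD^{\bo}(A)$ the layers correspond to $\{P_v\otimes P_{p_t-2}\}$, $\{I_v\otimes P_0\}$, and $\{\nu_{\Lambda_t}^kP_v\otimes S_{k-2}\}$ for $2\le k\le p_t-2$; since $-\otimes S_j$ and $-\otimes P_w$ are triangle functors and the $\nu_{\Lambda_t}^kP_v$ generate $\DDD^{\bo}(\Lambda_t)$, one gets $X\otimes P_{p_t-2}$, $X\otimes P_0$ and $X\otimes S_j$ ($0\le j\le p_t-4$) for every $X$, and the triangles $X\otimes S_j\to X\otimes P_j\to X\otimes P_{j+1}\to$ then produce all $X\otimes P_w$. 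Alternatively one falls back on the paper's recollement induction. As written, however, the tilting claim is not proved.
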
 
	
	\begin{proof}%[Proof of Theorem~\rm\ref{replicated thm}]   
		 We show that $T^n$ is a tilting object  in $\underline{\CM}^{\L} R$, and then by permuting the roles of weights $p_t$ and $p_n$,  the assertion follows.  We use induction on $n$.
		%$H=\{1,2,\dots,n-1\}$. 
		
		In the case $n=1$, by applying $(2-p_1)\x_1[p_1-2]$ to the special case $n=1$ of Corollary \ref{tilting n-cub  2}, we have that $T^1=\bigoplus_{i=0}^{p_1-2}\rho(\mathbf{k})(-i\x_1)[i]$ is tilting in $\underline{\CM}^{\L} R$.
		
		Consider the induction step and suppose that the assertion holds for all $k<n$. For $k=n$, it suffices to proceed by induction on $p_n$. If $p_n=2$, then the claim reduce to the case $k=n-1$ by Kn\"orrer periodicity. 
		We assume $p_n\ge 3$.  Let $R^{j}$ be an $\L_j$-graded BP singularity  with weights $p_1,\ldots,p_{n-1},p_{j,n}$ for $j=1,2$, where $p_{1,n}=p_n-1$ and $p_{2,n}=2$. 
		 By the induction hypothesis,  for $j=1,2$, the object 
		$$T_j= \bigoplus_{\s_j+(p_{j,n}-2)\x_{j,n}\le \vell \le \s_j+\vdelta_{j}} \bigoplus_{0\le i \le p_{j,n}-2} U_j^{\vell}(-i\s_j)[in] $$ 	
		is tilting in $\underline{\CM}^{\L_j}{R^j}$. %Thus $T_1':=T_1(\s_1)[-n]$ is also tilting in $\underline{\CM}^{\L_1}{R^1}$.	
			By Theorem \ref{ladder diagram},  we obtain a recollement 
			\begin{align*}
				\xymatrix{
					\underline{\CM}^{\L_1}{R^1}
					\ar[rrr]|{{\psi}_{1,p_n-2}\mathbb{S}}
					&&& \underline{\CM}^{\L}{R}
					\ar@/_1.5pc/[lll]|{\mathbb{S}^{-1}{\phi}_{1,p_n-2} }
					\ar@/^1.5pc/[lll]|{\mathbb{S}^{-1}{\phi}_{1,p_n-1}}
					\ar[rrr]|{{\phi}_{2,0}}
					&&& \underline{\CM}^{\L_2}{R^2},
					\ar@/_1.5pc/[lll]|{{\psi}_{2,-1}}
					\ar@/^1.5pc/[lll]|{{\psi}_{2,0}}
				}
			\end{align*}
			where $\mathbb{S}:=(\w)[n-2]=(-\s)[n]$.  
		By Lemma \ref{L-action to U}(b), we have the equalities 
		\begin{align*}
			V_1&:={{\psi}_{1,p_n-2}\mathbb{S}(T_1)}=\bigoplus_{\s+(p_n-2)\x_n\le \vell \le \s+\vdelta} \bigoplus_{1\le i \le p_n-2} U^{\vell}(-i\s)[in], \\
			V_2&:={\psi}_{2,0}(T_{2})=\bigoplus_{\s+(p_n-2)\x_n\le \vell \le \s+\vdelta}  U^{\vell}.
		\end{align*}
		Thus		$T^n=V_1\oplus V_2$. Following the second case of Lemma \ref{L-action to U}(a), we have 
		$$T_2':={\mathbb{S}^{-1}{\phi}_{1,p_n-2} (V_2)}=\bigoplus_{\s_1+(p_{1,n}-2)\x_{1,n}\le \vell \le \s_1+\vdelta_1} U_1^{\vell}(\s)[-n].$$
		Since $\mathbb{S}T_2'\in \add T_1$,  the rigidity of $T_1$  implies that $\Hom_{\underline{\CM}^{\L_1} R^1}(T_1,\mathbb{S}T_2'[m])=0$ for any $m\neq 0$. By Auslander-Reiten-Serre duality, we have $\Hom_{\underline{\CM}^{\L_1} R^1}(T_2',T_1[m])=0$ for any $m\neq 0$. Thus $T^n$ is tilting  in $\underline{\CM}^{\L}{R}$ by Proposition \ref{gluing tilting}.
		
		It remains to concern the shape of $T^n$. 
		%By Corollary \ref{tilting n-cub  2}, for any $0\le i \le p_n-2$, $$\underline{\End} (\bigoplus_{\s+(p_n-2)\x_n\le \vell \le \s+\vdelta} U^{\vell}(-i\s)[in])\simeq 	\bigotimes_{1\le i < n} \mathbf{k}\vec{\AA}_{p_i-1}.$$
		Observe that for each $1\le k<n$ and $\s+(p_n-2)\x_n\le \vell\le \vell+\x_k \le \s+\vdelta$, we have $\underline{\Hom}(U^{\vell+\x_k},U^{\vell})=\mathbf{k}$ by Lemma \ref{Hom}. 
		For each $\s+(p_n-2)\x_n \le \vell\le \z \le \s+\vdelta$, by Auslander-Reiten-Serre duality, we get $D\underline{\Hom}(U^{\vell},U^{\z}(-\s)[n]) \simeq \underline{\Hom}(U^{\z},U^{\vell})=\mathbf{k}$. Representing each indecomposable direct summand $U^{\vell}(-i\s)[in]$ of $T^n$  by $(\vell, i)$, the quiver of $\underline{\End}(T^n)^{\rm op}$ coincides with the quiver of $\Gamma^n$. We are left to consider the space $\underline{\Hom}(U^{\vell},U^{\z}(-\s)[n])$ for $\z \not\ge\vell$, whose $\mathbf{k}$-dual, by using Auslander-Reiten-Serre duality, equals zero. Therefore we have shown that $\underline{\End}(T^n)^{\rm op}\simeq \Gamma^n$  by Lemma \ref{quiver of Gamma}. 				
	\end{proof}
	
	As an immediate consequence, we have the following result. 
	
	\begin{corollary} \label{nHappel-Seidel Symmetry}	%Let $R$ be an $\L$-graded BP singularity of type $(p_1,\dots,p_n)$. Then 
		For any $t\in \{1,\dots,n\}$, the $(p_t-2)$-replicated algebra $\Lambda_t$ and the stable category $\underline{\CM}^{\L} R$ are derived equivalent.
		
	\end{corollary}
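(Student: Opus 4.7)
The plan is to deduce this corollary as an immediate formal consequence of Theorem \ref{replicated thm}, which is the substantive input. That theorem exhibits an explicit tilting object $T^t \in \underline{\CM}^{\L}R$ with $\underline{\End}(T^t)^{\op} \simeq \Gamma^t$, where $\Gamma^t = \widehat{\Lambda_t}^{(p_t-2)}$ is precisely the $(p_t-2)$-replicated algebra of $\Lambda_t = \bigotimes_{i\in S\setminus\{t\}}\mathbf{k}\vec{\AA}_{p_i-1}$. Given this, the derived equivalence falls out of the standard Keller-type theorem on tilting objects in algebraic triangulated categories.

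More precisely, the key step is to invoke the following principle: if $\mathcal{T}$ is an algebraic, idempotent-complete triangulated category admitting a tilting object $T$, then there is a triangle equivalence
\[
\mathcal{T} \simeq \DDD^{\bo}(\mod \End_{\mathcal{T}}(T)^{\op}).
\]
I would verify both hypotheses for $\mathcal{T} = \underline{\CM}^{\L}R$. Algebraicity is automatic: $\underline{\CM}^{\L}R$ is the stable category of the Frobenius exact category $\CM^{\L}R$, or equivalently, by the Buchweitz-Orlov equivalence $\underline{\CM}^{\L}R \simeq \DDD_{\rm sg}^{\L}(R)$, it is a Verdier quotient of $\DDD^{\bo}(\mod^{\L}R)$. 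Idempotent completeness is likewise standard, as $\CM^{\L}R$ is Krull-Schmidt. Applying the principle to $T = T^t$ from Theorem \ref{replicated thm} then yields a triangle equivalence
\[
\underline{\CM}^{\L}R \;\simeq\; \DDD^{\bo}(\mod \Gamma^t),
\]
which is exactly the asserted derived equivalence.

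There is essentially no obstacle remaining at the level of this corollary; all the difficulty has been packed into Theorem \ref{replicated thm}, whose proof builds $T^t$ by gluing tilting objects $T_1, T_2$ along the recollement of Theorem \ref{ladder diagram} and uses Auslander-Reiten-Serre duality (Theorem \ref{Auslander-Reiten-Serre duality}) to check that the extra $\Hom$-spaces $\underline{\Hom}(U^{\vell}, U^{\z}(-\s)[n])$ produce exactly the $D\Lambda_t$-bimodule bits realizing the replicated structure of $\Gamma^t$. Once those identifications are in hand, the present corollary is a purely formal invocation of Keller's tilting theorem, and the symmetry statement ``$\Gamma^t$ is derived equivalent to $\underline{\CM}^{\L}R$ for every $t \in S$'' then follows because the right-hand side does not depend on $t$.
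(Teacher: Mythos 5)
Your proposal is correct and matches the paper's (unwritten) argument: the paper derives this corollary as an ``immediate consequence'' of Theorem \ref{replicated thm}, exactly via the standard tilting theorem for algebraic, idempotent-complete (Krull--Schmidt) triangulated categories applied to $T^t$ with $\underline{\End}(T^t)^{\rm op}\simeq\Gamma^t$. Your write-up merely makes explicit the hypotheses (algebraicity of $\underline{\CM}^{\L}R$ as a Frobenius stable category, idempotent completeness) that the paper takes for granted.
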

	
	Combining Theorem \ref{tilting n-cub} and Corollary \ref{nHappel-Seidel Symmetry}, 
	%we have a special case of the result by Ladkani  \cite[Corollary 1.3]{La} that 
	the algebra $\bigotimes_{1\le i \le n} \mathbf{k}\vec{\AA}_{p_i-1}$ is 
	derived equivalent to the $(p_t-2)$-replicated algebra of $\bigotimes_{i\in S\setminus\{t\}}\mathbf{k}\vec{\AA}_{p_i-1}$ for some $t\in S=\{1\dots,n\}$.	
	In particular, as a special case $n=2$ of Corollary \ref{nHappel-Seidel Symmetry}, we obtain the following Happel-Seidel Symmetry \cite[Theorem 6.11]{KLM}, see also \cite[Corollary 1.2]{La}. 
	
	\begin{corollary}[Happel-Seidel Symmetry] \label{Happel-Seidel Symmetry n=2}   Let $a,b\ge 2$ and $m:=(a-1)(b-1)$. Then the algebras $\vec{\AA}_{m}(a)$, $\vec{\AA}_{m}(b)$ and the stable category $\underline{\CM}^{\L} R$ of  type $(a,b)$ are derived equivalent.
		
	\end{corollary}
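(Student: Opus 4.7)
The plan is to derive Corollary \ref{Happel-Seidel Symmetry n=2} as a direct specialization of Corollary \ref{nHappel-Seidel Symmetry} applied to the case $n=2$, combined with the explicit quiver description of $\Gamma^t$ given by Lemma \ref{quiver of Gamma}. Set $p_1 = a$ and $p_2 = b$, so that $S = \{1,2\}$, $\Lambda_1 = \mathbf{k}\vec{\AA}_{b-1}$, and $\Lambda_2 = \mathbf{k}\vec{\AA}_{a-1}$, with $\Gamma^1 = \widehat{\Lambda_1}^{(a-2)}$ and $\Gamma^2 = \widehat{\Lambda_2}^{(b-2)}$.

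First, I would invoke Corollary \ref{nHappel-Seidel Symmetry} twice — once with $t=1$ and once with $t=2$ — to conclude that both $\Gamma^1$ and $\Gamma^2$ are derived equivalent to $\underline{\CM}^{\L} R$; by transitivity of derived equivalence this already yields $\Gamma^1 \sim_{\text{der}} \Gamma^2 \sim_{\text{der}} \underline{\CM}^{\L} R$. It then remains only to identify $\Gamma^1$ with $\vec{\AA}_m(b)$ and $\Gamma^2$ with $\vec{\AA}_m(a)$, where $m = (a-1)(b-1)$.

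For this identification, I would specialize Lemma \ref{quiver of Gamma} to $n=2$, $t=1$. Then $\vdelta = (a-2)\x_1 + (b-2)\x_2$, and the vertex set $Q_0 = [\s+(a-2)\x_1, \s+\vdelta] \times [0, a-2]$ consists of $(a-1)(b-1) = m$ elements, arranged as $a-1$ copies of the interval $[\s+(a-2)\x_1, \s+\vdelta]$ (each of length $b-1$). Since $S\setminus\{1\} = \{2\}$, the only intra-copy arrows are the $x_2$-arrows, and the connecting arrows $\prod_{k\in S\setminus\{1\}} x_k = x_2$ join the end of each copy to the start of the next. The resulting underlying quiver is therefore the equioriented $\vec{\AA}_m$ quiver, and the relation $x_2^{p_2} = x_2^b = 0$ cuts off paths of length $b$, yielding $\Gamma^1 \simeq \mathbf{k}\vec{\AA}_m/\mathrm{rad}^{b}\, \mathbf{k}\vec{\AA}_m = \vec{\AA}_m(b)$. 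The same argument with the roles of $1$ and $2$ interchanged gives $\Gamma^2 \simeq \vec{\AA}_m(a)$, completing the proof.

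The main obstacle is purely bookkeeping: one must carefully translate the two-parameter vertex indexing of Lemma \ref{quiver of Gamma} into the linear presentation of a Nakayama quiver, checking that the bridging arrows at the end of each copy do indeed concatenate with the intra-copy arrows to form a single equioriented $\vec{\AA}_m$-chain, and that the only surviving relation is $x_2^{p_2} = 0$ (the commutativity relations $x_jx_k = x_kx_j$ being vacuous in $S\setminus\{1\} = \{2\}$). Once this is verified, the corollary is immediate.
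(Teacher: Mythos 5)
Your proposal is correct and takes essentially the same route as the paper: the paper also obtains this corollary as the $n=2$ specialization of Corollary \ref{nHappel-Seidel Symmetry}, with the identifications $\Gamma^1\simeq \vec{\AA}_m(b)$ and $\Gamma^2\simeq \vec{\AA}_m(a)$ recorded (via Lemma \ref{quiver of Gamma}) in the example immediately following that lemma. Your bookkeeping of the quiver of $\Gamma^t$ for $n=2$ is accurate, so nothing is missing.
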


	\section{Examples and applications}	\label{sec: Examples}
	In this section, we show how some of the main constructions work out in some particular and simple examples.
	
	\subsection{A simple example for gluing tilting objects}
	\begin{example}\label{example_1} 
		Let $n=2$ and assume that $(R^1,\L_1)$  and  $(R^2,\L_2)$ are GL hypersurface singularities of types  $(3,3)$ and $(3,2)$, respectively. In this case, we have $\vdelta_1=\s_1=\x_{1,1}+\x_{1,2}$, $\vdelta_2=\x_{2,1}$ and $\s_2=\x_{2,1}+\x_{2,2}$.  The tilting $2$-cuboids $T_1=\bigoplus_{\s_1 \le \vell \le \s_1+\vdelta_1} U_1^{\vell}$ in $\underline{\CM}^{\L_1}R^1$ and $T_2=\bigoplus_{\s_2 \le \vell \le \s_2+\vdelta_2} U_2^{\vell}$ in $\underline{\CM}^{\L_2}R^2$ can be presented in the component of the following Auslander-Reiten quivers, see \cite{HIMO, KLM}. Here, the label $\langle \z_j \rangle$  denotes the module $U_j^{\z_j}$ for $j=1,2$ and $\s_j\le \z_j \le \s_j+\vdelta_j$.
			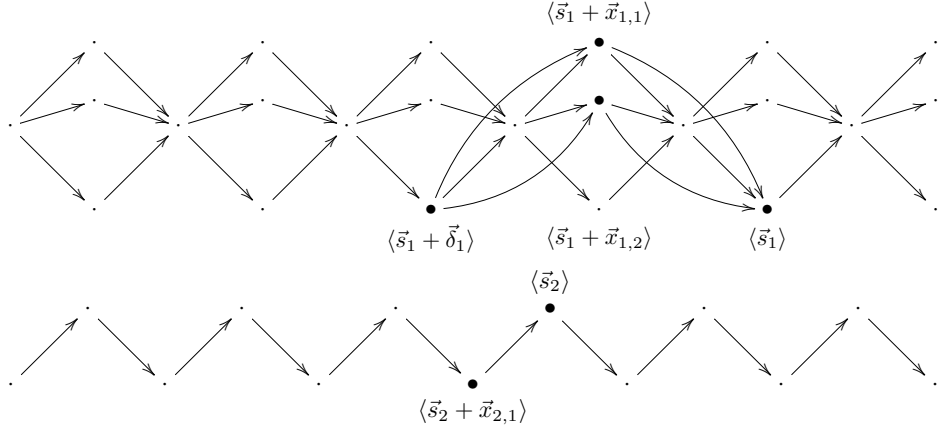
\begin{figure}[H]
			\centering 
			$
			\resizebox{\textwidth}{!}{
				\begin{xy} 0;<18pt,0pt>:<0pt,18pt>::
					(20,4) *+{\cdot} ="24",(22,6) *+{\cdot} ="34",(22,4.6) *+{\cdot} ="54",
					(22,2) *+{\cdot}="15",(30,1.3) *+{\langle \s_1+\vdelta_1\rangle} ,(24,4) *+{\cdot} ="25",%(26,6) *+{\bullet} ="35",
					(26,6) *+{\cdot} ="35", (26,4.6) *+{\cdot} ="55", (38,1.3) *+{\langle \s_1\rangle}, (34,1.3) *+{\langle \s_1+\x_{1,2}\rangle}, 
					(34,6.7) *+{\langle \s_1+\x_{1,1}\rangle}, %(26,6.7) *+{\langle \s_1+\x_{1,1}\rangle(\x_2)[-1]},
					(26,2) *+{\cdot} ="16",(28,4) *+{\cdot} ="26",(30,6) *+{\cdot} ="36",(30,4.6) *+{\cdot} ="56",
					(30,2) *+{\bullet} ="17",(32,4) *+{\cdot} ="27",(34,6) *+{\bullet} ="37",(34,4.6) *+{\bullet} ="57",
					(34,2) *+{\cdot} ="18",(36,4) *+{\cdot} ="28",(38,6) *+{\cdot} ="38",(38,4.6) *+{\cdot} ="58",
					(38,2) *+{\bullet} ="19",(40,4) *+{\cdot} ="29",(42,6) *+{\cdot} ="39",(42,4.6) *+{\cdot} ="59",
					(42,2) *+{\cdot} ="110",
					"24", {\ar"34"},"24", {\ar"54"},"24", {\ar"15"},"34", {\ar"25"},"54", {\ar"25"},
					"15", {\ar"25"},"25", {\ar"35"},"25", {\ar"55"},"25", {\ar"16"},"35", {\ar"26"},"55", {\ar"26"},
					"16", {\ar"26"},"26", {\ar"36"},"26", {\ar"56"},"26", {\ar"17"},"36", {\ar"27"},"56", {\ar"27"},
					"17", {\ar"27"},"27", {\ar"37"},"27", {\ar"57"},"27", {\ar"18"},"37", {\ar"28"},"57", {\ar"28"},
					"18", {\ar"28"},"28", {\ar"38"},"28", {\ar"58"},"28", {\ar"19"},"38", {\ar"29"},"58", {\ar"29"},
					"19", {\ar"29"},"29", {\ar"39"},"29", {\ar"59"},"29", {\ar"110"},
					"17", {{\ar@[black]@/^1pc/@[r]"37"}},
					"17", {{\ar@[black]@/_1pc/@[r]"57"}},
					"37", {{\ar@[black]@/^1pc/@[r]"19"}},
					"57", {{\ar@[black]@/_1pc/@[r]"19"}},
				\end{xy}
			} 
			$ 
				$
			\resizebox{\textwidth}{!}{
				\begin{xy} 0;<16pt,0pt>:<0pt,16pt>::
					(18,2) *+{\cdot} ="14",(20,4) *+{\cdot} ="24",	
					(22,2) *+{\cdot} ="15",(24,4) *+{\cdot} ="25",
					(26,2) *+{\cdot} ="16",(28,4) *+{\cdot} ="26",
					(30,2) *+{\bullet} ="17", (30,1.3) *+{\langle\s_2+\x_{2,1}\rangle},
					(32,4) *+{\bullet} ="27", (32,4.7) *+{\langle\s_2\rangle},
					(34,2) *+{\cdot} ="18",(36,4) *+{\cdot} ="28",
					(38,2) *+{\cdot} ="19",(40,4) *+{\cdot} ="29",
					(42,2) *+{\cdot} ="110",
					"14", {\ar"24"},"24", {\ar"15"},
					"15", {\ar"25"},"25", {\ar"16"},
					"16", {\ar"26"},"26", {\ar"17"},
					"17", {\ar"27"},"27", {\ar"18"},
					"18", {\ar"28"},"28", {\ar"19"},
					"19", {\ar"29"},"29", {\ar"110"},
				\end{xy}
			}%\vspace{-0.5cm}
			$			\caption{ The tilting $2$-cuboids   of types $(3,3)$ and $ (3,2)$ }	
		\end{figure}
	
	\end{example}
	
	We give a simple example to explain Theorem \ref{tilting object}.
	\begin{example} We continue to discuss Example \ref{example_1}.
		 Let $n=2$ and assume that $(R,\L)$ is a GL hypersurface singularity of type  $(3,4)$. In this case, we note that $\vdelta=\x_1+2\x_2$ and $\s=\x_1+\x_2$.
	
	(1) We show how the tilting objects $T_1=\bigoplus_{\s_1 \le \vell \le \s_1+\vdelta_1} U_1^{\vell}$ in  $\underline{\CM}^{\L_1} R^1$ and $T_2=\bigoplus_{\s_2 \le \vell \le \s_2+\vdelta_2} U_2^{\vell}$ in $\underline{\CM}^{\L_2} R^2$ glue a tilting object in $\underline{\CM}^{\L} R$. By Theorem \ref{ladder diagram}, we obtain a recollement
	\begin{align*}
		\xymatrix{
			\underline{\CM}^{\L_1}{R^1}
			\ar[rrr]|{{\psi}_{1,2}}
		%	\ar@/_3pc/[rrr]|{{\psi}_{1,3}}
			&&& \underline{\CM}^{\L}{R}
			\ar@/_1.5pc/[lll]|{{\phi}_{1,2} }
			\ar@/^1.5pc/[lll]|{{\phi}_{1,3}}
			\ar[rrr]|{{\phi}_{2,0}}
		%	\ar@/_3pc/[rrr]|{{\phi}_{2,1}}
			&&& \underline{\CM}^{\L_2}{R^2}.
			\ar@/_1.5pc/[lll]|{{\psi}_{2,-1}}
			\ar@/^1.5pc/[lll]|{{\psi}_{2,0}}
		}
	\end{align*}
	By Lemma \ref{L-action to U}(b),  ${\psi}_{1,2}(T_1)=\bigoplus_{\s \le \vell\le \s+\x_1+\x_2}U^{\vell}$ and $\psi_{2,0}(T_2)= U^{\s+2\x_2}\oplus U^{\s+\vdelta}$. By the second case of Lemma \ref{L-action to U}(a), we have 
	$T_2':={\phi}_{1,2}\psi_{2,0}(T_2)=U_1^{\s_1+\x_{1,1}}\oplus U_1^{\s_1+\vdelta_1}$, which belongs to $ \add T_1$. The rigidity of $T_1$ implies that $\Hom_{\underline{\CM}^{\L_1} R^1}(T_2',T_1[m])=0$ for any $m\neq 0$, and thus $\bigoplus_{\s \le \vell\le \s+\vdelta} U^{\vell}$ is tilting  in $V_1=\underline{\CM}^{\L}{R}$ by  Proposition \ref{gluing tilting}.
	
	(2) 	Notice that $T_1=\bigoplus_{0 \le \x \le \vdelta_1} \rho(\mathbf{k})(\x)[-\sigma(\x)]$ and $T_2=\bigoplus_{0 \le \x \le \vdelta_2} \rho(\mathbf{k})(\x)[-\sigma(\x)]$ by  Corollary \ref{x_i=[1]}. We construct another tilting object in $\underline{\CM}^{\L} R$. By Theorem \ref{ladder diagram}, we obtain a piece of ladder 
	\begin{equation*}
		\xymatrix{
			\underline{\CM}^{\L_1}{R^1}
			\ar[rrr]|{{\psi}_{1,1}}
			\ar@/_3pc/[rrr]|{{\psi}_{1,2}}
			&&& \underline{\CM}^{\L}{R}
			\ar@/_1.5pc/[lll]|{{\phi}_{1,1} }
			\ar@/^1.5pc/[lll]|{{\phi}_{1,2}}
			\ar[rrr]|{{\phi}_{2,-1}}
			\ar@/_3pc/[rrr]|{{\phi}_{2,0}}
			&&& \underline{\CM}^{\L_2}{R^2}.
			\ar@/_1.5pc/[lll]|{{\psi}_{2,-2}}
			\ar@/^1.5pc/[lll]|{{\psi}_{2,-1}}
		}
	\end{equation*} 	
	By Lemma \ref{L-action to U}(b) and Corollary \ref{x_i=[1]}, we have 
	\begin{align*}
		{\psi}_{1,1}(T_1)&=\bigoplus_{0\le \x \le \x_1+\x_2}\rho(\mathbf{k})(\x)[-\sigma(\x)]=U^{\s}\oplus U^{\s+\x_1} \oplus U^{\s+2\x_2} \oplus U^{\s+\vdelta},\\
		{\psi}_{2,-1}(T_2)&= \bigoplus_{2\x_2\le \x \le \x_1+2\x_2}\rho(\mathbf{k})(\x)[-\sigma(\x)]=U^{\s}(2\x_2)[-2]\oplus U^{\s+\x_1}(2\x_2)[-2].
	\end{align*}
	Following the first case of Lemma \ref{L-action to U}(a), we have $T_1':=\phi_{2,0}{\psi}_{1,1}(T_1)=T_2$. The rigidity of $T_2$ implies that $\Hom_{\underline{\CM}^{\L_2} R^2}(T_2,T_1'[m])=0$ for any $m\neq 0$, and thus $V_2=\bigoplus_{0 \le \x \le \vdelta} \rho(\mathbf{k})(\x)[-\sigma(\x)]$ is a tilting  object in $\underline{\CM}^{\L}{R}$ by Proposition \ref{gluing tilting}.
	
	The quiver of the tilting object $V_1$ (resp. $V_2$) in $\underline{\CM}^{\L}{R}$ is depicted  the points $\bullet$ together with the blue (resp. red) arrows in the following Auslander-Reiten quiver. Here, the label $\langle \z \rangle$  denotes the module $U^{\z}$ for $\s\le \z \le \s+\vdelta$.

		\begin{figure}[H]
			$
		\resizebox{\textwidth}{!}{
			\begin{xy} 0;<19pt,0pt>:<0pt,20pt>::
				(13,7) *+{\cdot} ="73",(13,5) *+{\cdot} ="54",(14,6) *+{\cdot} ="64",(15,7) *+{\cdot} ="74",(14,5) *+{\cdot} ="84",
				(13,3) *+{\cdot} ="35",(14,4) *+{\cdot} ="45",(15,5) *+{\cdot} ="55",(16,6) *+{\cdot} ="65",(17,7) *+{\cdot} ="75",
				(16,5) *+{\cdot} ="85",(15,3) *+{\bullet} ="36",(15.3,2.3) *+{\langle\s+\x_{1}\rangle(2\x_2)[-2]} , (16,4) *+{\cdot} ="46",(17,5) *+{\cdot} ="56",(18,6) *+{\cdot} ="66",(19,7) *+{\bullet} ="76", (19,7.7) *+{\langle \s\rangle(2\x_2)[-2]},  (18,5) *+{\cdot} ="86",
				(17,3) *+{\cdot} ="37",(18,4) *+{\cdot} ="47",(19,5) *+{\cdot} ="57",(20,6) *+{\cdot} ="67",(21,7) *+{\cdot} ="77",
				(20,5) *+{\cdot} ="87",(19,3) *+{\cdot} ="38",(20,4) *+{\cdot} ="48",(21,5) *+{\cdot} ="58",(22,6) *+{\cdot} ="68",
				(23,7) *+{\cdot} ="78",(22,5) *+{\cdot} ="88",(21,3) *+{\bullet} ="39", (21,2.3) *+{\langle \s+\vdelta\rangle},
				(22,4) *+{\cdot} ="49",(23,5) *+{\cdot} ="59",(24,6) *+{\cdot} ="69",(25,7) *+{\bullet} ="79",(25,7.7) *+{\langle \s+2\x_{2}\rangle},(24,5) *+{\bullet} ="89", (24,2.3) *+{\langle \s+\x_{1}+\x_{2}\rangle},
				(23,3) *+{\cdot} ="310",(24,4) *+{\cdot} ="410",(25,5) *+{\cdot} ="510",(26,6) *+{\cdot} ="610",(27,7) *+{\cdot} ="710",
				(26,5) *+{\cdot} ="810",(25,3) *+{\cdot} ="311",(26,4) *+{\cdot} ="411",(27,5) *+{\cdot} ="511",(28,6) *+{\cdot} ="611",
				(29,7) *+{\cdot} ="711",(28,5) *+{\bullet} ="811", (28,7.7) *+{\langle \s+\x_{2}\rangle},
				(27,3) *+{\bullet} ="312",(27,2.3) *+{\langle \s+\x_{1}\rangle}, (28,4) *+{\cdot} ="412",(29,5) *+{\cdot} ="512",(30,6) *+{\cdot} ="612", (31,7) *+{\bullet} ="712", (31,7.7) *+{\langle \s\rangle} ,(30,5) *+{\cdot} ="812",
				(29,3) *+{\cdot} ="313",(30,4) *+{\cdot} ="413",(31,5) *+{\cdot} ="513",(32,6) *+{\cdot} ="613",(33,7) *+{\cdot} ="713",
				(32,5) *+{\cdot} ="813",(31,3) *+{\cdot} ="314",(32,4) *+{\cdot} ="414",(33,5) *+{\cdot} ="514",(33,3) *+{\cdot} ="315",
				"73", {\ar"64"},
				"54", {\ar"64"},"64", {\ar"74"},"54", {\ar"84"},"54", {\ar"45"},"64", {\ar"55"},"74", {\ar"65"},"84", {\ar"55"},
				"35", {\ar"45"},"45", {\ar"55"},"55", {\ar"65"},"65", {\ar"75"},"55", {\ar"85"},"45", {\ar"36"},
				"55", {\ar"46"},"65", {\ar"56"},"75", {\ar"66"},"85", {\ar"56"},
				"36", {\ar"46"},"46", {\ar"56"},"56", {\ar"66"},"66", {\ar"76"},"56", {\ar"86"},"46", {\ar"37"},
				"56", {\ar"47"},"66", {\ar"57"},"76", {\ar"67"},"86", {\ar"57"},
				"37", {\ar"47"},"47", {\ar"57"},"57", {\ar"67"},"67", {\ar"77"},"57", {\ar"87"},
				"47", {\ar"38"},"57", {\ar"48"},"67", {\ar"58"},"77", {\ar"68"},"87", {\ar"58"},
				"38", {\ar"48"},"48", {\ar"58"},"58", {\ar"68"},"68", {\ar"78"},
				"58", {\ar"88"},"48", {\ar"39"},"58", {\ar"49"},"68", {\ar"59"},"78", {\ar"69"},"88", {\ar"59"},
				"39", {\ar"49"},"49", {\ar"59"},"59", {\ar"69"},"69", {\ar"79"},"59", {\ar"89"},
				"49", {\ar"310"},"59", {\ar"410"},"69", {\ar"510"},"79", {\ar"610"},"89", {\ar"510"},
				"310", {\ar"410"},"410", {\ar"510"},"510", {\ar"610"},"610", {\ar"710"},"510", {\ar"810"},
				"410", {\ar"311"},"510", {\ar"411"},"610", {\ar"511"},"710", {\ar"611"},"810", {\ar"511"},
				"311", {\ar"411"},"411", {\ar"511"},"511", {\ar"611"},"611", {\ar"711"},"511", {\ar"811"},
				"411", {\ar"312"},"511", {\ar"412"},"611", {\ar"512"},"711", {\ar"612"},"811", {\ar"512"},
				"312", {\ar"412"},"412", {\ar"512"},"512", {\ar"612"},"612", {\ar"712"},"512", {\ar"812"},
				"412", {\ar"313"},"512", {\ar"413"},"612", {\ar"513"},"712", {\ar"613"},"812", {\ar"513"},
				"313", {\ar"413"},"413", {\ar"513"},"513", {\ar"613"},	"613", {\ar"713"},"513", {\ar"813"},
				"413", {\ar"314"},"513", {\ar"414"},"613", {\ar"514"},"813", {\ar"514"},"314", {\ar"414"},
				"414", {\ar"514"},"414", {\ar"315"},
				"36", {{\ar@[red]@/^1pc/@[r]"76"}},
				"36", {{\ar@[red]@/^1pc/@[r]"39"}},
				"76", {{\ar@[red]@/_1pc/@[r]"79"}},
				"79", {{\ar@[red]@/_1pc/@[r]"712"}},
				"39", {{\ar@[red]@/^1pc/@[r]"312"}},
				"312", {{\ar@[red]@/^1pc/@[r]"712"}},
				"39", {{\ar@[blue]@/^1pc/@[r]"89"}},
				"89", {{\ar@[blue]@/^1pc/@[r]"312"}},
				"312", {{\ar@[blue]@/_1pc/@[r]"712"}},
				"39", {{\ar@[blue]@/^1pc/@[r]"79"}},
				"79", {{\ar@[blue]@/_1pc/@[r]"811"}},
				"811", {{\ar@[blue]@/_1pc/@[r]"712"}},
			\end{xy}
		}
		$\caption{ Two of the extended tilting $2$-cuboids of type $(3,4)$}	
		\end{figure}
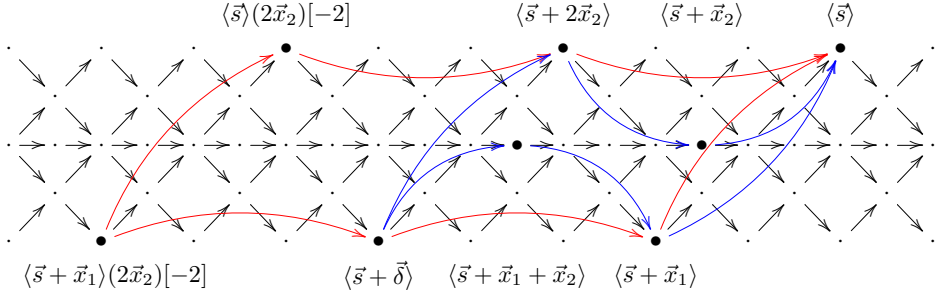	
		We remark that the endomorphism algebra $\underline{\End} (V_2)^{\rm op}$ is isomorphism to $\Lambda(\mathbf{q})$ for $\mathbf{q}=(2,2)$ as in the Figure \ref{figure (2,2)}, that is, $\mathbf{k}\vec{\AA}_2\otimes\mathbf{k}\vec{\AA}_3(2)$.
%		{\centering The Auslander-Reiten quiver of $\underline{\CM}^{\L} R$
%			
%		}	
	\end{example}
		
		\subsection{Replicated algebras of the Dynkin type} In this subsection, we present some derived equivalences for  replicated algebras of the Dynkin type.
						
		Let us first recall the following well-known result in \cite{KLM}.
		\begin{lemma} \label{tensor product}
			\begin{itemize}
				\item[(a)] For $m=2,3,4$, the algebra $\mathbf{k}\vec{\AA}_2\otimes\mathbf{k}\vec{\AA}_m$ is derived equivalent to
				$\mathbf{k}\DD_4$ if $m=2$, $\mathbf{k}\EE_6$ if $m=3$, and $\mathbf{k}\EE_8$ if $m=4$.
				\item[(b)] $\mathbf{k}\vec{\AA}_2\otimes\mathbf{k}\vec{\AA}_5$ is derived equivalent to the tubular canonical algebra of type $(2,3,6)$.
				\item[(c)] $\mathbf{k}\vec{\AA}_3\otimes\mathbf{k}\vec{\AA}_3$ is derived equivalent to the tubular canonical algebra of type $(2,4,4)$.
				\item[(d)] $\mathbf{k}\vec{\AA}_2\otimes\mathbf{k}\vec{\AA}_2\otimes\mathbf{k}\vec{\AA}_2$ is derived equivalent to the tubular canonical algebra of type $(3,3,3)$.
			\end{itemize}
		\end{lemma}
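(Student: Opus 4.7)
The plan is to handle all four statements in a uniform way by combining the tilting $n$-cuboid of Corollary \ref{tilting n-cub} with Kn\"orrer periodicity (Proposition \ref{thick}(b)) to reduce each to a known classification for $n=3$ triangle singularities.

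By Corollary \ref{tilting n-cub}, every tensor product $\bigotimes_{i=1}^{n}\mathbf{k}\vec{\AA}_{p_i-1}$ is the opposite endomorphism algebra of the tilting $n$-cuboid $T_{\rm cub}$ in $\underline{\CM}^{\L} R$ for the BP singularity $R$ of weights $(p_1,\dots,p_n)$; in particular each algebra in the lemma is derived equivalent to the corresponding $\underline{\CM}^{\L} R$. Using Kn\"orrer periodicity to introduce a quadratic variable, cases (a) with $m\in\{2,3,4\}$ reduce to $R'$ of weight type $(2,3,m+1)$; case (b) to $(2,3,6)$; case (c) to $(2,4,4)$; case (d) is already of weight type $(3,3,3)$. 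A numerical check confirms the ranks agree: the tilting cuboid has $\prod_i(p_i-1)$ summands, equal to $4,6,8$ in (a) and to the number of simples $p_1+p_2+p_3-1$ of the tubular canonical algebras in (b)--(d).

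At the $n=3$ level I would separate the two regimes by the Euler characteristic $\chi_{\X}:=2-\sum_i(1-1/p_i)$ of the associated weighted projective line $\X$. For (a), $\chi_{\X}>0$ (Dynkin/domestic), and the classification of Kajiura--Saito--Takahashi \cite{KST1,KST2} (see also \cite{KLM,KLM2,L1}) gives $\underline{\CM}^{\L'}R'\simeq\DDD^{\bo}(\mod\mathbf{k}\Delta)$ with $\Delta=\DD_4,\EE_6,\EE_8$ for weights $(2,3,3),(2,3,4),(2,3,5)$ respectively. For (b)--(d), $\chi_{\X}=0$ (tubular), and the Geigle--Lenzing identification \cite{GL} together with the tilting analysis of \cite{KLM} gives a derived equivalence between $\underline{\CM}^{\L'}R'$ and the tubular canonical algebra of the corresponding weight type.

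The main obstacle is the subtle distinction in (a) between the Dynkin type of the stable category $\underline{\vect}\,\X$ (namely $\DD_4,\EE_6,\EE_8$) and that of $\DDD^{\bo}(\coh\X)$ (namely $\EE_6,\EE_7,\EE_8$, via the star $T(2,3,m+1)$). The former reflects that line bundles become trivial in the stable Frobenius quotient, and is precisely what matches the $\prod_i(p_i-1)$ summands of the tilting $n$-cuboid. Taking the KST and KLM classifications as a black box, the remainder is a direct application of the tilting theory developed in Sections \ref{sec: Tilting objects via tensor products} and \ref{sec: Happel-Seidel symmetry}.
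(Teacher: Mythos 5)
The paper offers no proof of this lemma at all --- it is simply recalled as a well-known result from \cite{KLM} --- and your argument is a correct reconstruction of exactly the route that reference takes: realize each tensor product as the endomorphism algebra of the tilting cuboid in $\underline{\CM}^{\L}R$ for the appropriate weight type, apply Kn\"orrer periodicity to land on a triangle singularity, and invoke the known Dynkin/tubular classification of those stable categories. Your rank checks and your remark distinguishing the Dynkin type of $\underline{\vect}\,\X$ (namely $\DD_4,\EE_6,\EE_8$) from that of $\DDD^{\bo}(\coh\X)$ (the stars $\EE_6,\EE_7,\EE_8$) are both accurate, so the proposal matches the intended justification.
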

		
		Note that if two finite dimensional algebras $A$ and $B$ are derived equivalent, then  their repetitive algebras $\widehat{A}$ and $\widehat{B}$ are derived equivalent (see \cite{As}), and also their $m$-replicated algebras $A^{(m)}$ and $B^{(m)}$, see \cite{CL}.
		
		\begin{proposition}  \label{tensor product 2}
			Let $l,m \ge 2$. Then the following assertions hold. 
			\begin{itemize} 
				\item[(a)] $($\cite[Theorem 6.11]{KLM}, \cite[Corollary 1.2]{La}$)$ The algebra $\mathbf{k}\vec{\AA}_l\otimes\mathbf{k}\vec{\AA}_m$ is derived equivalent to the $(l-1)$-replicated algebra of $\mathbf{k}\vec{\AA}_m$, that is,  $\mathbf{k}\vec{\AA}_{lm}(m+1)$.
				\item[(b)]  The algebra $\mathbf{k}\vec{\AA}_2\otimes\mathbf{k}\vec{\AA}_l\otimes\mathbf{k}\vec{\AA}_m$ is derived equivalent to the  $(l-1)$-replicated algebra of $\mathbf{k}\DD_4$ if $m=2$, that of $\mathbf{k}\EE_6$ if $m=3$, and that of $\mathbf{k}\EE_8$ if $m=4$.
			\end{itemize}
		\end{proposition}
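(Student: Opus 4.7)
The plan is to deduce both statements as direct corollaries of the tilting theory developed in Sections \ref{sec: Tilting objects via tensor products}--\ref{sec: Happel-Seidel symmetry}, applied to carefully chosen weights. In each part, I will exhibit two tilting objects in the same triangulated category $\underline{\CM}^{\L} R$ whose endomorphism algebras are precisely the two algebras to be compared; the desired derived equivalence then follows from the general principle that $\underline{\End}(T)^{\op}$ is derived equivalent to $\underline{\CM}^{\L} R$ whenever $T$ is tilting.

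For part (a), I specialize to $n = 2$ and weight type $\mathbf{p} = (l+1,\, m+1)$. Corollary \ref{tilting n-cub} produces the tilting $2$-cuboid $T_{\rm cub}$ with $\underline{\End}(T_{\rm cub})^{\op} \simeq \mathbf{k}\vec{\AA}_l \otimes \mathbf{k}\vec{\AA}_m$, so this tensor product is derived equivalent to $\underline{\CM}^{\L} R$. On the other hand, Theorem \ref{replicated thm} applied with $t = 1$ yields a tilting object $T^1$ with $\underline{\End}(T^1)^{\op} \simeq \Gamma^1$, and the $n=2$ example following Lemma \ref{quiver of Gamma} identifies $\Gamma^1$ with the Nakayama algebra $\mathbf{k}\vec{\AA}_{lm}(m+1)$, which is by construction the $(l-1)$-replicated algebra of $\mathbf{k}\vec{\AA}_m$. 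Concatenating the two derived equivalences through $\underline{\CM}^{\L} R$ yields (a); this is essentially the content of Corollary \ref{Happel-Seidel Symmetry n=2}, reinterpreted to make the tensor product visible.

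For part (b), I specialize to $n = 3$ and weight type $\mathbf{p} = (3,\, l+1,\, m+1)$. Corollary \ref{tilting n-cub} identifies $\mathbf{k}\vec{\AA}_2 \otimes \mathbf{k}\vec{\AA}_l \otimes \mathbf{k}\vec{\AA}_m$ up to derived equivalence with $\underline{\CM}^{\L} R$. Theorem \ref{replicated thm} applied with $t = 2$ (so that $p_t - 2 = l - 1$) exhibits $\Gamma^2$, the $(l-1)$-replicated algebra of $\mathbf{k}\vec{\AA}_2 \otimes \mathbf{k}\vec{\AA}_m$, as the endomorphism algebra of another tilting object in $\underline{\CM}^{\L} R$. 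To convert this into the asserted Dynkin form, I invoke Lemma \ref{tensor product}(a), which identifies $\mathbf{k}\vec{\AA}_2 \otimes \mathbf{k}\vec{\AA}_m$ up to derived equivalence with $\mathbf{k}\DD_4$, $\mathbf{k}\EE_6$, $\mathbf{k}\EE_8$ for $m = 2, 3, 4$ respectively, together with the fact recalled from \cite{CL} that derived equivalence of finite dimensional algebras is preserved by the formation of $m$-replicated algebras.

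No substantive obstacle is expected, since every ingredient is already in place. The only delicate point is selecting the correct index $t$ in Theorem \ref{replicated thm}: the asymmetric roles of $l$ and $m$ in the claim force $t = 1$ in (a) and $t = 2$ in (b), corresponding in each case to the weight $p_t = l + 1$ that becomes the replication index.
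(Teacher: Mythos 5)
Your proposal is correct and follows essentially the same route as the paper: both parts are obtained by comparing the endomorphism algebras of the tilting $n$-cuboid (Corollary \ref{tilting n-cub}) and of the tilting object $T^t$ of Theorem \ref{replicated thm} inside the same category $\underline{\CM}^{\L}R$, for the weight types $(l+1,m+1)$ and $(3,l+1,m+1)$ respectively, and then invoking Lemma \ref{tensor product}(a) together with the stability of derived equivalence under replication for part (b). Your choices $t=1$ in (a) and $t=2$ in (b) match the replication index $p_t-2=l-1$ exactly as the paper's citations of Corollaries \ref{Happel-Seidel Symmetry n=2} and \ref{nHappel-Seidel Symmetry} require.
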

		\begin{proof} (a) This follows directly from Corollary \ref{Happel-Seidel Symmetry n=2}.
			
			(b) By Corollary \ref{nHappel-Seidel Symmetry}, the algebra $\mathbf{k}\vec{\AA}_2\otimes\mathbf{k}\vec{\AA}_l\otimes\mathbf{k}\vec{\AA}_m$ is derived equivalent to the  $(l-1)$-replicated algebra of $\mathbf{k}\vec{\AA}_2\otimes\mathbf{k}\vec{\AA}_m$. The remaining assertion follows directly from Lemma \ref{tensor product}(a).
		\end{proof}
		
		As a consequence, we have the following result.
		
		\begin{corollary} \label{tensor product 3}For any $l,m,t \ge 2$, the algebra $\mathbf{k}\vec{\AA}_l\otimes\mathbf{k}\vec{\AA}_m\otimes\mathbf{k}\vec{\AA}_t$ is derived equivalent to the $(t-1)$-replicated algebra of the Nakayama algebra $\mathbf{k}\vec{\AA}_{lm}(m+1)$.
			\end{corollary}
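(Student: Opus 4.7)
The statement follows by assembling derived equivalences that have already been set up, with the three-variable Happel--Seidel symmetry (Corollary \ref{nHappel-Seidel Symmetry}) doing the essential work.

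The plan is to apply the previous results to an $\L$-graded BP singularity $R$ with weights $\mathbf{p}=(l+1,\,m+1,\,t+1)$, so that $p_i-1$ runs through $l,m,t$. First, by Corollary \ref{tilting n-cub} applied to this $R$, the stable category $\underline{\CM}^{\L} R$ admits a tilting object whose endomorphism algebra is $\mathbf{k}\vec{\AA}_l\otimes \mathbf{k}\vec{\AA}_m\otimes \mathbf{k}\vec{\AA}_t$; hence these two are derived equivalent. Second, apply Corollary \ref{nHappel-Seidel Symmetry} with the distinguished index chosen to be $t=3$ (the one corresponding to the weight $p_3=t+1$): this yields a derived equivalence between $\underline{\CM}^{\L} R$ and the $(p_3-2)$-replicated algebra of $\bigotimes_{i\in\{1,2\}}\mathbf{k}\vec{\AA}_{p_i-1}=\mathbf{k}\vec{\AA}_l\otimes \mathbf{k}\vec{\AA}_m$, i.e.\ the $(t-1)$-replicated algebra of $\mathbf{k}\vec{\AA}_l\otimes \mathbf{k}\vec{\AA}_m$.

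It then remains to replace $\mathbf{k}\vec{\AA}_l\otimes \mathbf{k}\vec{\AA}_m$ inside the replication by the Nakayama algebra $\mathbf{k}\vec{\AA}_{lm}(m+1)$. Proposition \ref{tensor product 2}(a) (the two-factor Happel--Seidel symmetry) gives a derived equivalence $\mathbf{k}\vec{\AA}_l\otimes \mathbf{k}\vec{\AA}_m \simeq_{\rm der} \mathbf{k}\vec{\AA}_{lm}(m+1)$. Using the fact (recalled just before Proposition \ref{tensor product 2}) that derived equivalences of finite dimensional algebras lift to their $r$-replicated algebras for every $r\ge 0$, with $r=t-1$, this promotes to a derived equivalence of the corresponding $(t-1)$-replicated algebras.

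Chaining the three derived equivalences
\[
\mathbf{k}\vec{\AA}_l\otimes \mathbf{k}\vec{\AA}_m\otimes \mathbf{k}\vec{\AA}_t \;\simeq_{\rm der}\; \underline{\CM}^{\L} R \;\simeq_{\rm der}\; (\mathbf{k}\vec{\AA}_l\otimes \mathbf{k}\vec{\AA}_m)^{(t-1)} \;\simeq_{\rm der}\; \mathbf{k}\vec{\AA}_{lm}(m+1)^{(t-1)}
\]
then yields the corollary. There is no genuine obstacle: the only subtle point is bookkeeping which index plays the role of $t$ in Corollary \ref{nHappel-Seidel Symmetry} (so that the resulting replication parameter is exactly $t-1$ and the remaining tensor factor is exactly $\mathbf{k}\vec{\AA}_l\otimes \mathbf{k}\vec{\AA}_m$), which is ensured by the symmetric choice of weights $(l+1,m+1,t+1)$ and the freedom to permute them.
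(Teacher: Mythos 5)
Your argument is correct and is exactly the route the paper intends: realize $\mathbf{k}\vec{\AA}_l\otimes\mathbf{k}\vec{\AA}_m\otimes\mathbf{k}\vec{\AA}_t$ via the tilting cuboid for weights $(l+1,m+1,t+1)$, apply Corollary \ref{nHappel-Seidel Symmetry} at the third index to land on the $(t-1)$-replicated algebra of $\mathbf{k}\vec{\AA}_l\otimes\mathbf{k}\vec{\AA}_m$, and then replace that algebra by $\mathbf{k}\vec{\AA}_{lm}(m+1)$ using Proposition \ref{tensor product 2}(a) together with the fact that derived equivalences pass to replicated algebras. No gaps.
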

		
		\begin{example} By Lemma \ref{tensor product}, the tubular canonical algebra  of type $(3,3,3)$ is derived equivalent to the algebra $(\mathbf{k}\vec{\AA}_2)^{\otimes 3}$. It is presented by the following quiver
		\[
		\begin{xy} 0;<3pt,0pt>:<0pt,3.6pt>:: 
			(-30,0) *+{0} ="0",
			(-10,10) *+{\x_1} ="1",(-10,0) *+{\x_2} ="2",(-10,-10) *+{\x_3} ="3",
			(10,10) *+{\x_1+\x_2} ="12",(10,0) *+{\x_1+\x_3} ="13",(10,-10) *+{\x_2+\x_3} ="23",
			(30,0) *+{\x_1+\x_2+\x_3} ="123",
			"0", {\ar"1"},"0", {\ar"2"},"0", {\ar"3"},
			"1", {\ar"12"},"1", {\ar"13"},"2", {\ar"12"},"2", {\ar"23"},"3", {\ar"13"},"3", {\ar"23"},
			"12", {\ar"123"},"13", {\ar"123"},"23", {\ar"123"},
		\end{xy}
		\]
		with the commutativity relations at each square. 
		It is further  derived equivalent to the $1$-replicated algebras  $((\mathbf{k}\vec{\AA}_2)^{\otimes 2})^{(1)}$ by Corollary \ref{nHappel-Seidel Symmetry},  $\mathbf{k}\DD_4^{(1)}$ by Proposition \ref{tensor product 2},
		and $(\mathbf{k}\vec {\AA}_4(3))^{(1)}$ by Corollary \ref{tensor product 3}. These algebras are presented by the following quivers with relations, respectively. Here,  the dotted lines indicate the relations.	
		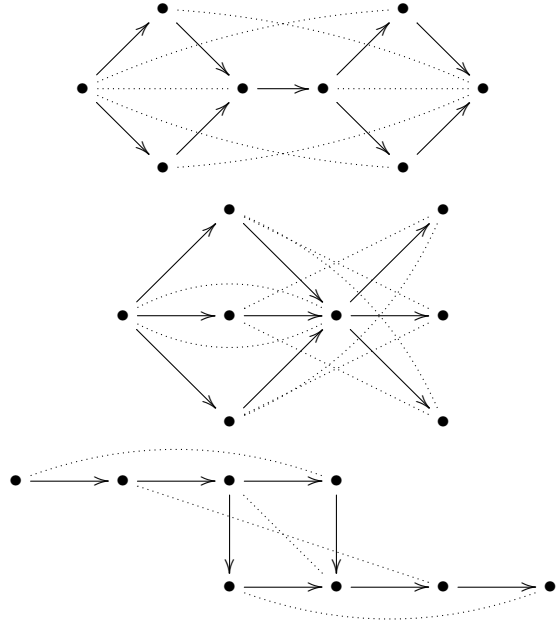
\begin{figure}[H]
		\centering 
		
		$
			\begin{xy} 0;<3pt,0pt>:<0pt,3pt>:: 
			(-20,0) *+{\bullet} ="0",
			(-10,10) *+{\bullet} ="1",(-10,-10) *+{\bullet} ="3",
			(20,10) *+{\bullet} ="12",(0,0) *+{\bullet} ="13",(20,-10) *+{\bullet} ="23",
			(10,0) *+{\bullet} ="123",(30,0) *+{\bullet} ="14",
			"0", {\ar"1"},"0", {\ar"3"},"1", {\ar"13"},"3", {\ar"13"},
			"13", {\ar"123"},"123", {\ar"12"},"123", {\ar"23"}, "23", {\ar"14"},"12", {\ar"14"},
			"0", {{\ar@{.}"13"}},
			"0", {{\ar@[black]@/^0.5pc/@{.}"12"}},"0", {{\ar@[black]@/_0.5pc/@{.}"23"}},
			"123", {{\ar@{.}"14"}},
			"1", {{\ar@[black]@/^0.5pc/@{.}"14"}},"3", {{\ar@[black]@/_0.5pc/@{.}"14"}},
		\end{xy}
		$
				\[
			\begin{xy} 0;<4pt,0pt>:<0pt,4pt>:: 
				(-20,0) *+{\bullet} ="0",
				(-10,10) *+{\bullet} ="1",(-10,0) *+{\bullet} ="2",(-10,-10) *+{\bullet} ="3",
				(10,10) *+{\bullet} ="12",(0,0) *+{\bullet} ="13",(10,-10) *+{\bullet} ="23",
				(10,0) *+{\bullet} ="123",
				"0", {\ar"1"},"0", {\ar"2"},"0", {\ar"3"},
				"1", {\ar"13"},"2", {\ar"13"},"3", {\ar"13"},
				"13", {\ar"123"},"13", {\ar"12"},"13", {\ar"23"},
				"0", {{\ar@[black]@/^1pc/@{.}"13"}},"0", {{\ar@[black]@/_1pc/@{.}"13"}},
				"1", {{\ar@{.}"123"}},"1", {{\ar@[black]@/^1pc/@{.}"23"}},
				"3", {{\ar@{.}"123"}},"3", {{\ar@[black]@/_1pc/@{.}"12"}},
				"2", {{\ar@{.}"12"}},	"2", {{\ar@{.}"23"}}
			\end{xy}
			\]
			$
			\begin{xy} 0;<4pt,0pt>:<0pt,4pt>:: 
				(20,0) *+{\bullet} ="0", (0,10) *+{\bullet} ="2",  (30,0) *+{\bullet} ="1",(10,10) *+{\bullet} ="12",
				(40,0) *+{\bullet} ="11",(20,10) *+{\bullet} ="112",(30,10) *+{\bullet} ="111",(50,0) *+{\bullet} ="1112",
				"2", {\ar"12"}, "12", {\ar"112"},"112", {\ar"111"},"112", {\ar"0"},"0", {\ar"1"},"1", {\ar"11"},"11", {\ar"1112"},"111", {\ar"1"},
				"2", {{\ar@[black]@/^1pc/@{.}"111"}},"0", {{\ar@[black]@/_1pc/@{.}"1112"}},	
				"12", {{\ar@{.}"11"}},"112", {{\ar@{.}"1"}}	
			\end{xy}
			$
			
			\caption{ The $1$-replicated algebras  $((\mathbf{k}\vec{\AA}_2)^{\otimes 2})^{(1)}$, $\mathbf{k}\DD_4^{(1)}$ and $(\mathbf{k}\vec {\AA}_4(3))^{(1)}$}	
			\end{figure}

	\end{example}
		
		\noindent {\bf Acknowledgements.}  The author would like to thank Jianmin Chen and Shiquan Ruan for helpful comments and discussions.

			\vskip 5pt
		\noindent {\scriptsize   \noindent  Weikang Weng\\
			School of Mathematical Sciences, \\
			Xiamen University, Xiamen, 361005, Fujian, PR China.\\
			E-mails: 
			wkweng@stu.xmu.edu.cn\\ }
		\vskip 3pt
		
	\end{document}